\newcommand{\Z}{\mathbb{Z}}   
\newcommand{\nn}{\overline{\mathbb{Z}}_{+}}
\newcommand{\I}{\mathscr{I}}
\renewcommand{\P}{\mathscr{P}}
\newcommand{\R}{\mathscr{R}}  
\newcommand{\f}{\mathfrak{f}}
\def\ndash{--}
\providecommand{\Ind}{\mathop{\mathrm{Ind}}\nolimits} 
\providecommand{\Res}{\mathop{\mathrm{Res}}\nolimits} 
\providecommand{\val}{\mathop{\mathrm{val}}\nolimits} 
\renewcommand{\aa}{\mathtt{a}}
\newcommand{\bb}{\mathtt{b}}
\newcommand{\cc}{\mathtt{c}}
\newcommand{\dd}{\mathtt{d}}
\newcommand{\ii}{\mathtt{i}}
\newcommand{\RR}{\mathtt{R}}
\newcommand{\TT}{\mathtt{T}}
\newcommand{\XX}{\mathtt{X}}
\def\disp#1#2{\overset{\makebox[0.75\width]{$#2$}}{\makebox[\width]{$#1$}}}
\def\disq#1#2{\overset{\makebox[1\width]{$#2$}}{\makebox[\width]{$#1$}}}
\theoremstyle{plain}
\newtheorem{theorem}{Theorem}[section]
\newtheorem{lemma}[theorem]{Lemma}
\newtheorem{proposition}[theorem]{Proposition}
\newtheorem{corollary}[theorem]{Corollary}
\theoremstyle{definition}
\newtheorem{definition}[theorem]{Definition}
\newtheorem{remark}[theorem]{Remark}
\begin{document}

\title[Branching rules for unramified principal series representations]{Branching rules for unramified principal series representations of $\mathrm{GL}(3)$ over a $p$-adic field}

\thanks{This research was supported by grants from NSERC and the Faculty of Science~of the University of Ottawa.  The first author would also like to acknowledge the support of the Centre de Recherches Math\'ematiques while at the University of~Ottawa.}

\author{Peter S. Campbell}
\email{peter.campbell@bristol.ac.uk}
\address{
Department of Mathematics,
University of Bristol,
Bristol
BS8 1TW,
UK}

\author{Monica Nevins}
\email{mnevins@uottawa.ca}
\address{
Department of Mathematics \and Statistics,
University of Ottawa,
Ottawa, Ontario
K1N 6N5,
Canada}

\begin{abstract}
On restriction to the maximal compact subgroup $\mathrm{GL}(3,\R)$, an unramified principal series representation of the $p$-adic group $\mathrm{GL}(3,F)$ decomposes into a direct sum of finite-dimensional irreducibles each appearing with finite multiplicity.  We describe a coarser decomposition into components which, although reducible in general, capture the equivalences between the irreducible constituents.
\end{abstract}

\keywords{unramified principal series; $p$-adic group; maximal compact subgroup;  branching rules; double cosets.}
\subjclass[2000]{20G25,  (20G05).}

\maketitle


\section{Introduction}

The aim of this paper is to investigate the relationship between the representation theory of a $p$-adic group $G$ and its maximal compact subgroups $K$.  Given an admissible representation of $G$, its restriction to $K$ decomposes as a direct sum of smooth irreducible representations of $K$ each with finite multiplicity.  The problem of describing this decomposition when $G=\mathrm{GL}(2,F)$ and $K=\mathrm{GL}(2,\R)$, for $F$ a non-archimedean local field of odd residual characteristic and its ring of integers $\R$, was extensively studied by Silberger \cite{sil1} and  Casselman \cite{cas2} with the restriction on the characteristic removed.  Further, the case of the principal series representations for $G=\mathrm{SL}(2,F)$ was considered by the second author in \cite{ne}.

We are interested in $G=\mathrm{GL}(3,F)$ and its unramified principal series representations; the ramified case will be treated in a separate paper.  The restriction to $K=\mathrm{GL}(3,\R)$ of any unramified principal series representation is simply the permutation representation over the subgroup $B$ of upper triangular matrices in $K$.  In particular, this contains the pull-back of the corresponding permutation representation for the group $\mathrm{GL}(3,\f)$, defined over the residue field $\f$ of $F$, and the decomposition of this is well known \cite{st}: each irreducible constituent can be expressed as an alternating sum of permutation representations over certain standard parabolic subgroups in $\mathrm{GL}(3,\f)$.

Our approach is generalise this by considering representations $V_{\cc}$, indexed by triples $\cc=(c_{1},c_{2},c_{3})$ with $0\leq c_{1},c_{2}\leq c_{3}\leq c_{1}+c_{2}$, which are expressible in terms of permutation representations over compact open subgroups containing $B$.  By determining the double coset structure of $K$ we are able to calculate the intertwining number $\I(V_{\cc},V_{\dd})$ for any two such components; that is, the dimension of the space of $K$-homomorphisms between $V_{\cc}$ and $V_{\dd}$.  Although $V_{\cc}$ is irreducible when $c_{3}=c_{1}+c_{2}$ or $\max\{c_{1},c_{2}\}$, we find that it is reducible in general with $\I(V_{\cc},V_{\cc})$ depending on the order of the residue field.  However, it transpires that two components are either completely equivalent or contain no common constituents. In the final section, we present an application of our results to a family of virtual representations defined by Lees \cite{lees} as analogues of the Steinberg representation.


\section{Principal series representations}

Let $F$ be a non-archimedean local field with ring of integers $\R$ and residue field $\mathfrak{f}$.  We will assume that $\mathfrak{f}$ has odd characteristic and order $q$.  If $\pi$ denotes a conductor of $F$ then the maximal ideal of $\R$ is $\P=\pi\R$.  For each positive integer $n\in \mathbb{Z}_{+}$ we define $\P^{n} = \{ x\in F : \val(x)\geq n\}$ where $\val$ is the discrete valuation on $F$ normalised so that $\val (\pi) = 1$.

Let $\mathbb{G}=\mathrm{GL}(3)$, then $\mathbb{G}(F)= \mathrm{GL}(3,F)$ is a locally compact group with maximal compact open subgroup $K=\mathbb{G}(\R) = \mathrm{GL}(3,\R)$. Indeed, the topology on $\mathbb{G}(F)$ has a neighbourhood base about the identity given by the compact open subgroups $K_{n} = 1 + M_{3,3}(\P^{n})$ for $n\in \Z_{+}$.  Further, let $\mathbb{B}$ be the subgroup of upper triangular matrices and recall that $\mathbb{B}$ decomposes as $\mathbb{B}=\mathbb{T}\mathbb{U}$ where $\mathbb{T}$ is the subgroup of diagonal matrices and $\mathbb{U}$ is the subgroup of upper unitriangular matrices.  We will denote by $B$, $T$ and $U$ the subgroups $\mathbb{B}(\R)$, $\mathbb{T}(\R)$ and $\mathbb{U}(\R)$ of $K$ respectively.

Given a character $\chi$ of $\mathbb{T}(F)$ we may extend it to a character of $\mathbb{B}(F)$, again denoted $\chi$, by defining it to be trivial on $\mathbb{U}(F)$.  The corresponding principal series representation of $\mathbb{G}(F)$ is the induced representation $\Ind_{\mathbb{B}(F)}^{\mathbb{G}(F)}\chi$ consisting of the space smooth functions
\begin{equation*}
V = \{ f\in C^{\infty}(\mathbb{G}(F)) : f(bg) = \chi(b)|b|f(g) \text{ for all }g\in \mathbb{G}(F), b\in\mathbb{B}(F)\}
\end{equation*}
with the action of $\mathbb{G}(F)$ given by right translation.  The normalization factor $|b|$ is introduced to ensure that $\Ind_{\mathbb{B}(F)}^{\mathbb{G}(F)}\chi \simeq \Ind_{\mathbb{B}(F)}^{\mathbb{G}(F)}\chi'$ whenever $\chi$ and $\chi'$ lie in the same orbit under the Weyl group $W$ of $\mathbb{G}$ (see \cite{car}*{Theorem 3.3}, for example).

We will be interested in the restriction of the principal series representation $V$ to the maximal compact subgroup $K$.  As $\mathbb{G}(F)=K\mathbb{B}(F)$ and $B = \mathbb{B}(F) \cap K$, Mackey theory implies that
\begin{equation*}
\Res_{K}^{\mathbb{G}(F)} V  \simeq \Ind_{B}^{K} \Res_{B}^{\mathbb{B}(F)} \chi.
\end{equation*}
This can be interpreted as the principal series representation of $K$ obtained from the character $\Res_{T}^{\mathbb{T}(F)} \chi$ of $T$. The first step towards decomposing the restriction into irreducibles is the following result regarding the principal congruence subgroups $K_{n}$ of $K$.

\begin{lemma} The subspaces $V^{K_{n}}$ of vectors fixed under the action of $K_{n}$ are $K$-stable and finite-dimensional.  They are non-zero if and only if $K_{n}\cap T\subseteq \ker(\chi)$, in which case $\chi$ extends trivially to a character of $BK_{n}$ and
\begin{equation*}
V^{K_{n}} = \Ind_{BK_{n}}^{K} \chi
\end{equation*}
where both sides are viewed as $K$-representations.
\end{lemma}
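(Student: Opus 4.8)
The plan is to identify $V^{K_n}$ with a space of functions on $K$ by restriction, exploiting the Iwasawa decomposition $\mathbb{G}(F)=\mathbb{B}(F)K$ together with the equality $\mathbb{B}(F)\cap K=B$.

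First, $K$-stability of $V^{K_n}$ is immediate from normality of $K_n$ in $K$: if $f$ is right-$K_n$-invariant and $k\in K$, then the right-translate $k\cdot f$ is right-$(k^{-1}K_nk)$-invariant, i.e.\ right-$K_n$-invariant. For finite-dimensionality, observe that an $f\in V^{K_n}$ is right-$K_n$-invariant and satisfies $f(bg)=\chi(b)|b|f(g)$, hence is determined by its values on a set of representatives for $\mathbb{B}(F)\backslash\mathbb{G}(F)/K_n$; by Iwasawa the natural map $B\backslash K/K_n\to\mathbb{B}(F)\backslash\mathbb{G}(F)/K_n$ is surjective, and the left-hand side is finite since $[K:K_n]<\infty$. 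Thus $\dim V^{K_n}\leq|B\backslash K/K_n|<\infty$.

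For the vanishing criterion, take $t\in K_n\cap T$ and $f\in V^{K_n}$. For every $k\in K$ we have $tk=k(k^{-1}tk)$ with $k^{-1}tk\in K_n$, so right-$K_n$-invariance gives $f(tk)=f(k)$; on the other hand the transformation law gives $f(tk)=\chi(t)|t|f(k)=\chi(t)f(k)$ since $t\in K$ forces $|t|=1$. Hence $(\chi(t)-1)f(k)=0$ for all $k\in K$. If $K_n\cap T\not\subseteq\ker(\chi)$, choosing such a $t$ with $\chi(t)\neq1$ shows $f$ vanishes on $K$, hence on $\mathbb{G}(F)=\mathbb{B}(F)K$ by the transformation law, so $V^{K_n}=0$. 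The same computation shows more generally that $f(k_nk)=f(k)$ for every $k_n\in K_n$, which we use below.

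Now assume $K_n\cap T\subseteq\ker(\chi)$. Since $K_n$ is normal in $K$ and $B\subseteq K$, the product $BK_n$ is a subgroup, and $\chi$ extends to a character of $BK_n$, trivial on $K_n$: it is well defined because if $b_1k_1=b_2k_2$ with $b_i\in B$, $k_i\in K_n$, then $b_2^{-1}b_1=k_2k_1^{-1}\in B\cap K_n=(T\cap K_n)(U\cap K_n)$, on which $\chi$ is trivial --- on $U\cap K_n\subseteq\mathbb{U}(F)$ by construction of $\chi$, and on $T\cap K_n$ by hypothesis --- and it is multiplicative because moving the $K_n$-parts together by normality reduces multiplicativity to that of $\chi$ on $B$. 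The restriction map $f\mapsto f|_K$ then carries $V^{K_n}$ into $\Ind_{BK_n}^{K}\chi$: for $b\in B$, $k_n\in K_n$, $k\in K$ we get $f(bk_nk)=\chi(b)|b|f(k_nk)=\chi(b)f(k)$ using $|b|=1$ and $f(k_nk)=f(k)$. This map intertwines the right-translation actions of $K$, and it is injective since $f$ on $\mathbb{B}(F)K$ is determined by $f|_K$. It is surjective: given $\phi\in\Ind_{BK_n}^{K}\chi$, set $f(bk)=\chi(b)|b|\phi(k)$ for $b\in\mathbb{B}(F)$, $k\in K$; this is well defined because $b_1k_1=b_2k_2$ forces $b_2^{-1}b_1=k_2k_1^{-1}\in B$ and $\chi(b_2^{-1}b_1)\phi(k_1)=\phi((b_2^{-1}b_1)k_1)=\phi(k_2)$, it is right-$K_n$-invariant hence smooth, and it satisfies the defining transformation law of $V$. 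Finally $\Ind_{BK_n}^{K}\chi\neq0$ since $BK_n$ has finite index in $K$, which supplies the converse of the vanishing criterion. The only points demanding genuine care --- and the crux of the argument --- are the two well-definedness verifications, each of which rests on identifying $\mathbb{B}(F)\cap K$ with $B$ and on $\chi$ being trivial on unipotents and of absolute value one on $B$.
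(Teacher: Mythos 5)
Your proof is correct and complete; the paper in fact states this lemma without proof, and your argument (Iwasawa decomposition $\mathbb{G}(F)=\mathbb{B}(F)K$ with $\mathbb{B}(F)\cap K=B$, normality of $K_{n}$ in $K$, the factorisation $B\cap K_{n}=(T\cap K_{n})(U\cap K_{n})$, and the restriction map $f\mapsto f|_{K}$) is exactly the standard Mackey-theoretic argument the authors are implicitly invoking. The two well-definedness checks you single out are indeed the only points needing care, and you handle both correctly.
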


In this paper we will be concerned the unramified principal series; that is, the case where the restriction of $\chi$ to $T$ is the trivial character $1$.  Here we obtain the permutation representation
\begin{equation*}
\Res_{K}^{\mathbb{G}(F)} V \simeq \Ind_{B}^{K} 1
\end{equation*}
and for each $n\in \Z_{+}$
\begin{equation*}
V^{K_{n}} = \Ind_{BK_{n}}^{K} 1.
\end{equation*}


\section{A decomposition} \label{sec:defn}

The filtration of $K$ by congruence subgroups allows us to decompose the representation $V$ into a direct sum of finite-dimensional $K$-invariant subspaces
\begin{equation*}
V \simeq \bigoplus_{n=0}^{\infty} V^{K_{n}}/V^{K_{n-1}}.
\end{equation*}
However, these quotients are far from being irreducible in general so we will consider a finer filtration of $K$ obtained from certain compact open subgroups $C_{\cc}$.

Define the partially ordered set
\begin{equation*}
\TT = \{ \cc=(c_{1},c_{2},c_{3})\in \mathbb{Z}^{3} : 0 \leq c_{1}, c_{2} \leq c_{3} \leq c_{1}+c_{2}\}
\end{equation*}
with order given by $\cc \succeq \dd$ if and only if $c_{i}\geq d_{i}$ for each $i$.  We associate to each triple $\cc=(c_{1},c_{2},c_{3})\in \TT$ a compact open subgroup $C_{\cc}$ of $K$ by defining
\begin{equation*}C_{\cc}=\left[ \begin{array}{ccc}
\R         & \R         & \R \\
\P^{c_{1}} & \R         & \R \\
\P^{c_{3}} & \P^{c_{2}} & \R
\end{array}\right] \cap K\end{equation*}
and note that $C_{\cc} \subseteq C_{\dd}$ if and only if $\cc \succeq \dd$.
Consequently, if for each $\cc\in \TT$ we set
\begin{equation*}
U_{\cc} = \Ind_{C_{\cc}}^{K} 1
\end{equation*}
then $U_{\dd}$ arises as a subrepresentation of $U_{\cc}$ precisely when $\dd \preceq \cc.$  Thus we can consider the quotient
\begin{equation*}V_{\cc} = U_{\cc}/ \sum_{\dd \prec \cc} U_{\dd}.\end{equation*}
In particular, since $BK_{n} = C_{(n,n,n)}$, we see that
\begin{equation*}V^{K_{n}}={\bigoplus_{\cc \preceq (n,n,n)}} V_{\cc}.\end{equation*}

Our aim is to determine the reducibility of and equivalences between the $V_{\cc}$.  To achieve this we first give a description of $V_{\cc}$ as an alternating sum in the Grothendieck group $\mathscr{K}_{0}(K)$ of $K$.  Recall that $\mathscr{K}_{0}(K)$ is the abelian group generated by the isomorphism classes $[V]$ of finitely-generated representations $V$ of $K$ together with the relations $[V\oplus U] = [V] + [U]$ and $[V/U]=[V]-[U]$.

We begin with some notation.  Let $\cc=(c_{1},c_{2},c_{3})\in \TT$ and, if $c_{1}$ and $c_{2}$ are both non-zero, for each $1\leq i \leq 3$ define
\begin{equation*}\cc_{\{i\}} = (c_{1}-\delta_{i,1}, c_{2}-\delta_{i,2}, c_{3}-\delta_{i,3})\end{equation*}
where $\delta_{i,j}=1$ if $i=j$ and $0$ otherwise.  If $c_{1}=0$ then $\cc=(0,c_{2},c_{2})$ and we only consider $\cc_{\{3\}} = (0,c_{2}-1,c_{2}-1)$.  Similarly, if $c_{2}=0$ then we only have $\cc_{\{3\}} = (c_{1}-1,0,c_{1}-1)$.
The set of all triples in $\TT$ lying immediately below $\cc$ is then $\{ \cc_{\{i\}} : i\in S_{\cc}\}$ where $S_{\cc} = \{ i : \cc_{\{i\}} \in \TT\}$.  In particular, this means that
\begin{equation*}V_{\cc} = U_{\cc} / \sum_{i\in S_{\cc}} U_{\cc_{\{i\}}}.\end{equation*}

Further, let $\cc_{\emptyset} = \cc$ and for each non-empty $I\subseteq S_{\cc}$ define
\begin{equation*}\cc_{I} = \max \{ \dd \in \TT : \dd \preceq \cc_{\{i\}} \text{ for all } i\in I\}.\end{equation*}
For example, if $\cc=(2,3,4)$ then $\cc_{\{1,2\}} = (1,2,3)$ since $(1,2,4)\notin \TT.$

\begin{lemma}
For each $I,J\subseteq S_{\cc}$ we have $U_{\cc_{I}}\cap U_{\cc_{J}} = U_{\cc_{I\cup J}}$.
\end{lemma}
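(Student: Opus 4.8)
The plan is to reduce the claim to a purely group-theoretic identity among the subgroups $C_{\cc_{I}}$ of $K$, and then to verify that identity by an explicit computation with matrices.

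First I would realise the representations $U_{\cc_{I}}$ concretely. Since $\cc_{I}\preceq\cc$ for every $I\subseteq S_{\cc}$, each $U_{\cc_{I}}$ occurs as a subrepresentation of $U_{\cc}$, and with the conventions of Section~\ref{sec:defn} it is identified with the space of locally constant functions $f$ on $K$ that are left-invariant under $C_{\cc_{I}}$, with $K$ acting by right translation. A function that is left-invariant under both $C_{\cc_{I}}$ and $C_{\cc_{J}}$ is left-invariant under every product of their elements, i.e.\ under the subgroup $\langle C_{\cc_{I}},C_{\cc_{J}}\rangle$ that they generate, and conversely; hence $U_{\cc_{I}}\cap U_{\cc_{J}}=\Ind_{\langle C_{\cc_{I}},C_{\cc_{J}}\rangle}^{K}1$, while $U_{\cc_{I\cup J}}=\Ind_{C_{\cc_{I\cup J}}}^{K}1$. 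As a subgroup of $K$ is determined by its space of left-invariant functions, the lemma is equivalent to the identity $\langle C_{\cc_{I}},C_{\cc_{J}}\rangle=C_{\cc_{I\cup J}}$ of subgroups of $K$.

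One inclusion is immediate: from $\cc_{I}\succeq\cc_{I\cup J}$ and $\cc_{J}\succeq\cc_{I\cup J}$ we get $C_{\cc_{I}},C_{\cc_{J}}\subseteq C_{\cc_{I\cup J}}$, hence $\langle C_{\cc_{I}},C_{\cc_{J}}\rangle\subseteq C_{\cc_{I\cup J}}$. For the reverse inclusion I would first make the coordinates explicit. Writing $\cc_{I}=(b_{1},b_{2},b_{3})$, $\cc_{J}=(b_{1}',b_{2}',b_{3}')$ and $\cc_{I\cup J}=(a_{1},a_{2},a_{3})$, one checks from the description of the $\cc_{\{i\}}$ and of $\cc_{I}$ as a maximum that $\cc_{I}$ is the coordinatewise minimum of $\{\cc_{\{i\}}:i\in I\}$, with its third coordinate lowered to the sum of the first two when that is needed to remain in $\TT$. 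This gives $a_{1}=\min(b_{1},b_{1}')$, $a_{2}=\min(b_{2},b_{2}')$, and---because of the saturation---$a_{3}=\min(b_{3},b_{3}',a_{1}+a_{2})$.

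Next I would establish the auxiliary claim that for any $\dd=(d_{1},d_{2},d_{3})\in\TT$ the group $C_{\dd}$ is generated by $B$ together with the three lower root subgroups $x_{21}(\P^{d_{1}})$, $x_{32}(\P^{d_{2}})$, $x_{31}(\P^{d_{3}})$, where $x_{ij}(s)=1+sE_{ij}$ and $E_{ij}$ denotes the matrix unit. This is Gaussian elimination by left multiplications, proved by induction on the size: for $M\in C_{\dd}$ some entry of the first column is a unit since $\det M$ is a unit; if $m_{11}$ is a unit one clears $m_{21}$ and $m_{31}$ using elements of $x_{21}(\P^{d_{1}})$ and $x_{31}(\P^{d_{3}})$ (the scalars needed lie in the right ideals), after which $d_{2}\le d_{3}$ keeps the $(3,2)$-entry in $\P^{d_{2}}$, so one is reduced to the analogous $2\times2$ statement for the lower right block and then to clearing the first row; if $m_{11}\in\P$, then the exponent attached to whichever lower position contains a unit must vanish (here $d_{1}\le d_{3}$ and $d_{2}\le d_{3}$ are used), so the permutation matrix interchanging the $(1,1)$-slot with that position---up to sign---already lies in the group being generated, and a row swap reduces to the previous case. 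Granting this, the generators of $C_{\cc_{I\cup J}}$ all lie in $\langle C_{\cc_{I}},C_{\cc_{J}}\rangle$: $B$ and the upper root subgroups lie in both $C_{\cc_{I}}$ and $C_{\cc_{J}}$; $x_{21}(\P^{a_{1}})$ equals $x_{21}(\P^{b_{1}})$ or $x_{21}(\P^{b_{1}'})$, and hence lies in $C_{\cc_{I}}$ or in $C_{\cc_{J}}$, and likewise for $x_{32}(\P^{a_{2}})$; finally $x_{31}(\P^{a_{3}})$ lies in $C_{\cc_{I}}$ or $C_{\cc_{J}}$ when $a_{3}$ equals $b_{3}$ or $b_{3}'$, while if $a_{3}=a_{1}+a_{2}$ the commutator identity $[x_{32}(t),x_{21}(s)]=x_{31}(st)$ yields $x_{31}(\P^{a_{1}+a_{2}})\subseteq\langle x_{21}(\P^{a_{1}}),x_{32}(\P^{a_{2}})\rangle$. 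Thus $C_{\cc_{I\cup J}}\subseteq\langle C_{\cc_{I}},C_{\cc_{J}}\rangle$, and combined with the easy inclusion this gives the required equality.

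The main obstacle is the auxiliary generation claim for $C_{\dd}$: the elimination must be organised so that every intermediate matrix stays in $C_{\dd}$ --- which is precisely where the defining inequalities $d_{1},d_{2}\le d_{3}\le d_{1}+d_{2}$ are needed --- and so that each row interchange is realised by a matrix one has actually exhibited. The combinatorial computation of the coordinates of $\cc_{I}$, $\cc_{J}$ and $\cc_{I\cup J}$ is routine but should be carried out with the degenerate cases (some $c_{i}=0$, or $c_{3}=c_{1}+c_{2}$, in which $S_{\cc}$ is small and the lemma is essentially trivial) handled separately.
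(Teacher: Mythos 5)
Your argument is correct. Note that the paper states this lemma without proof; the mechanism it has in mind is visible in the proof of Lemma~\ref{lem:distributive}, which rests on the product identity $C_{\cc_{\{i\}}}C_{\cc_{\{3\}}}=C_{\cc_{\{3\}}}C_{\cc_{\{i\}}}=C_{\cc_{\{i,3\}}}$. That identity (in general, $C_{\cc_{I}}C_{\cc_{J}}=C_{\cc_{I\cup J}}$, which can be checked by one matrix multiplication using $d_{1},d_{2}\leq d_{3}\leq d_{1}+d_{2}$) is strictly stronger than your generation statement $\langle C_{\cc_{I}},C_{\cc_{J}}\rangle=C_{\cc_{I\cup J}}$: it gives the intersection lemma at once, since a function left-invariant under two subgroups whose product is a group is invariant under that product, and it is also exactly what is needed to invoke \cite{cam}*{Lemma 13} for the distributivity in Lemma~\ref{lem:distributive}. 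Your route --- reducing to generation and then proving that $C_{\dd}$ is generated by $B$ together with the root subgroups $x_{21}(\P^{d_{1}})$, $x_{32}(\P^{d_{2}})$, $x_{31}(\P^{d_{3}})$ via row reduction, and using the commutator $[x_{32}(t),x_{21}(s)]=x_{31}(\pm st)$ to handle the saturated case $a_{3}=a_{1}+a_{2}$ --- is sound but does more work than necessary for this lemma alone, and the generation statement would not by itself suffice for Lemma~\ref{lem:distributive}. Your computation of the coordinates of $\cc_{I\cup J}$ as $\bigl(\min(b_{1},b_{1}'),\min(b_{2},b_{2}'),\min(b_{3},b_{3}',a_{1}+a_{2})\bigr)$ is correct, as is the observation that the degenerate cases ($c_{1}=0$, $c_{2}=0$ or $c_{3}=c_{1}+c_{2}$) force $S_{\cc}=\{3\}$ and make the statement trivial. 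If you streamline, prove the product identity once and deduce both this lemma and the next from it, as the authors evidently intend.
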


\begin{lemma} \label{lem:distributive}
For any $\cc\in \TT$ with $S_{\cc}=\{1,2,3\}$
\begin{equation*}(U_{\cc_{\{1\}}} + U_{\cc_{\{2\}}} ) \cap U_{\cc_{\{3\}}} = U_{\cc_{\{1,3\}}} + U_{\cc_{\{2,3\}}}.\end{equation*}
\end{lemma}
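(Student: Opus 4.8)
The plan is to reduce the asserted identity to a statement about the underlying compact open subgroups, and then verify that statement by direct matrix computation. First, observe that the right-hand side is always contained in the left-hand side: by the previous lemma $U_{\cc_{\{1,3\}}} = U_{\cc_{\{1\}}} \cap U_{\cc_{\{3\}}} \subseteq U_{\cc_{\{1\}}} + U_{\cc_{\{2\}}}$, and similarly $U_{\cc_{\{2,3\}}} \subseteq U_{\cc_{\{2\}}} + U_{\cc_{\{3\}}}$; since both are contained in $U_{\cc_{\{3\}}}$, their sum lies in the intersection. So the work is all in the reverse inclusion. For this I would use that each $U_{\dd} = \Ind_{C_{\dd}}^K 1$ is spanned inside a fixed ambient space (say $V^{K_n}$ for $n$ large) by the characteristic functions of the right cosets of $C_{\dd}$ in $K$, equivalently by the $K$-translates of the normalized indicator of $C_{\dd}$ itself. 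The key combinatorial fact to establish is therefore the set-theoretic identity
\begin{equation*}
C_{\cc_{\{3\}}} \cap \bigl( C_{\cc_{\{1\}}}\cdot C_{\cc_{\{2\}}} \text{-generated subgroup} \bigr) = \langle C_{\cc_{\{1,3\}}}, C_{\cc_{\{2,3\}}} \rangle,
\end{equation*}
but more precisely what is needed is a cancellation/distributivity property at the level of cosets, which I address below.

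The cleanest route, I expect, is to work coset-by-coset. A vector in the left-hand side is a linear combination $f = \sum_j a_j \mathbf{1}_{C_{\cc_{\{1\}}} g_j} + \sum_k b_k \mathbf{1}_{C_{\cc_{\{2\}}} h_k}$ which happens to be fixed by $C_{\cc_{\{3\}}}$ acting on the left. Grouping the cosets of $C_{\cc_{\{1\}}}$ and of $C_{\cc_{\{2\}}}$ into their $C_{\cc_{\{3\}}}$-orbits, $C_{\cc_{\{3\}}}$-invariance forces, within each orbit, the coefficients to be constant; summing a constant over a $C_{\cc_{\{3\}}}$-orbit of cosets of $C_{\cc_{\{i\}}}$ produces precisely the indicator of a single coset of the group generated by $C_{\cc_{\{3\}}}$ and $C_{\cc_{\{i\}}}$. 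Now $\langle C_{\cc_{\{3\}}}, C_{\cc_{\{1\}}} \rangle$ is again a compact open subgroup containing $B$, hence (since the subgroups $C_{\dd}$ for $\dd \in \TT$, together with such joins, are totally controlled by their valuation matrices) it equals $C_{\cc_{\{1,3\}}}$; similarly $\langle C_{\cc_{\{3\}}}, C_{\cc_{\{2\}}} \rangle = C_{\cc_{\{2,3\}}}$. This is where one uses that $\cc_{\{1,3\}}$ was defined as the $\TT$-maximal element below both $\cc_{\{1\}}$ and $\cc_{\{3\}}$: the group generated is governed by the entrywise minimum of the two valuation patterns, and one checks that minimum lies in $\TT$ and coincides with $\cc_{\{1,3\}}$ (the only subtlety, the failure of $(c_1,c_2,c_1+c_2+1)$-type patterns to lie in $\TT$, is exactly what the definition of $\cc_I$ via $\max$ in $\TT$ accommodates). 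Thus $f \in U_{\cc_{\{1,3\}}} + U_{\cc_{\{2,3\}}}$, giving the reverse inclusion.

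The main obstacle is the middle step: showing that a $C_{\cc_{\{3\}}}$-invariant element supported on cosets of $C_{\cc_{\{1\}}}$ together with cosets of $C_{\cc_{\{2\}}}$ cannot have ``mixed'' cancellation between the two families that is invisible when one tries to rewrite it in terms of $C_{\cc_{\{1,3\}}}$- and $C_{\cc_{\{2,3\}}}$-cosets. Concretely, one must rule out that a combination of $C_{\cc_{\{1\}}}$-cosets and $C_{\cc_{\{2\}}}$-cosets sums to something $C_{\cc_{\{3\}}}$-fixed without each family separately being a sum of $C_{\cc_{\{3\}}}$-orbits. The way around this is the first lemma: $U_{\cc_{\{1\}}} \cap U_{\cc_{\{2\}}} = U_{\cc_{\{1,2\}}}$, and one checks $\cc_{\{1,2\}} \preceq \cc_{\{1,3\}}$ and $\cc_{\{1,2\}} \preceq \cc_{\{2,3\}}$ whenever $S_{\cc} = \{1,2,3\}$, so the overlap of the two families is already absorbed into the right-hand side and can be quotiented out; after doing so the sum $U_{\cc_{\{1\}}} + U_{\cc_{\{2\}}}$ becomes (the image of) a direct sum and the orbit-averaging argument applies cleanly in each summand. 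I would carry out the steps in the order: (1) easy inclusion via Lemma on intersections; (2) identify $\langle C_{\cc_{\{3\}}}, C_{\cc_{\{i\}}}\rangle = C_{\cc_{\{i,3\}}}$ for $i=1,2$ by a direct look at the valuation matrices, together with the verification that the relevant minima lie in $\TT$; (3) reduce modulo $U_{\cc_{\{1,2\}}}$ to make the sum essentially direct; (4) run the $C_{\cc_{\{3\}}}$-orbit-averaging argument to land in $U_{\cc_{\{1,3\}}} + U_{\cc_{\{2,3\}}}$.
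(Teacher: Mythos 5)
Your overall strategy (easy inclusion via the intersection lemma, then an averaging/orbit argument over $C_{\cc_{\{3\}}}$ for the reverse inclusion) is close in spirit to what the paper does, but two of your key steps do not hold up. The paper's proof is a one-line appeal to \cite{cam}*{Lemma 13}, whose hypothesis is precisely the set-theoretic product identity $C_{\cc_{\{i\}}}C_{\cc_{\{3\}}}=C_{\cc_{\{3\}}}C_{\cc_{\{i\}}}=C_{\cc_{\{i,3\}}}$, and that identity is exactly what your write-up is missing. You only assert that the \emph{generated} subgroup $\langle C_{\cc_{\{3\}}},C_{\cc_{\{i\}}}\rangle$ equals $C_{\cc_{\{i,3\}}}$, which is strictly weaker: your step ``summing over a $C_{\cc_{\{3\}}}$-orbit of $C_{\cc_{\{i\}}}$-cosets gives the indicator of a single coset of the generated group'' is false unless the product $C_{\cc_{\{i\}}}C_{\cc_{\{3\}}}$ is itself that group (the orbit sweeps out $C_{\cc_{\{i\}}}C_{\cc_{\{3\}}}g$, not $\langle C_{\cc_{\{i\}}},C_{\cc_{\{3\}}}\rangle g$). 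Once the product identity is verified by matrix computation, the ``mixed cancellation'' worry you spend most of your effort on simply disappears: write $f=f_{1}+f_{2}$ with $f_{i}\in U_{\cc_{\{i\}}}$ and average over left translation by $C_{\cc_{\{3\}}}$; the average fixes $f$ and sends each $f_{i}$ into $U_{\cc_{\{i,3\}}}$, because averaging a left-$C_{\cc_{\{i\}}}$-invariant function over $C_{\cc_{\{3\}}}$ coincides with averaging it over the product group $C_{\cc_{\{i,3\}}}$.

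The second, independent problem is that your proposed repair of the mixed-cancellation issue rests on a false claim: it is not true that $\cc_{\{1,2\}}\preceq\cc_{\{1,3\}}$ and $\cc_{\{1,2\}}\preceq\cc_{\{2,3\}}$ whenever $S_{\cc}=\{1,2,3\}$. Take $\cc=(3,3,4)$: then $\cc_{\{1\}}=(2,3,4)$, $\cc_{\{2\}}=(3,2,4)$, $\cc_{\{3\}}=(3,3,3)$, so $\cc_{\{1,2\}}=(2,2,4)$ (the entrywise minimum, which here lies in $\TT$) while $\cc_{\{1,3\}}=(2,3,3)$, and $(2,2,4)\not\preceq(2,3,3)$ since $4>3$. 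So $U_{\cc_{\{1,2\}}}$ is not ``absorbed into the right-hand side'' and your step (3) fails as stated. I recommend discarding steps (3)--(4) and instead proving the product identity $C_{\cc_{\{i\}}}C_{\cc_{\{3\}}}=C_{\cc_{\{i,3\}}}$ directly (an explicit factorization of an element of $C_{\cc_{\{i,3\}}}$, using that only the $(3,3)$-versus-$(i)$ entries differ between the two factors), after which the averaging argument above completes the proof in a few lines.
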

\begin{proof}
This follows from \cite{cam}*{Lemma 13} since $C_{\cc_{\{i\}}} C_{\cc_{\{3\}}}  = C_{\cc_{\{3\}}} C_{\cc_{\{i\}}} = C_{\cc_{\{i,3\}}}$ for each $i$.
\end{proof}

\begin{proposition} \label{prop:altsum}
For any $\cc\in \TT$
\begin{equation*}[V_{\cc}] = \sum_{I\subseteq S_{\cc}} (-1)^{|I|} [U_{\cc_{I}}].\end{equation*}
\end{proposition}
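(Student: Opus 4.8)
The plan is to expand $[V_{\cc}]$ using the Grothendieck relations and then evaluate the class of the sum $\sum_{i\in S_{\cc}}U_{\cc_{\{i\}}}$ by an inclusion--exclusion argument. Since $V_{\cc}=U_{\cc}/\sum_{i\in S_{\cc}}U_{\cc_{\{i\}}}$, the relation $[V/U]=[V]-[U]$ in $\mathscr{K}_{0}(K)$ reduces the proposition to the identity
\[
\Bigl[\,\sum_{i\in S_{\cc}}U_{\cc_{\{i\}}}\,\Bigr]=\sum_{\emptyset\ne I\subseteq S_{\cc}}(-1)^{|I|+1}[U_{\cc_{I}}].
\]
The only elementary input needed is that $[A+B]=[A]+[B]-[A\cap B]$ for any two $K$-stable subspaces $A,B$ of a finite-dimensional $K$-representation: the second isomorphism theorem furnishes a $K$-isomorphism $(A+B)/A\cong B/(A\cap B)$, and the formula then follows by applying $[V/U]=[V]-[U]$ twice.

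Next I would record, from the intersection lemma $U_{\cc_{I}}\cap U_{\cc_{J}}=U_{\cc_{I\cup J}}$ and a one-line induction on $|I|$, that $\bigcap_{i\in I}U_{\cc_{\{i\}}}=U_{\cc_{I}}$ for every nonempty $I\subseteq S_{\cc}$ (using the lemma with $J=\{j\}$ at each step). Because $S_{\cc}\subseteq\{1,2,3\}$, only three cases arise. If $|S_{\cc}|\le 1$ the asserted formula is immediate, the sum over $I$ having at most two terms. If $S_{\cc}=\{i,j\}$, a single application of the two-subspace identity together with $U_{\cc_{\{i\}}}\cap U_{\cc_{\{j\}}}=U_{\cc_{\{i,j\}}}$ gives $[\,U_{\cc_{\{i\}}}+U_{\cc_{\{j\}}}\,]=[U_{\cc_{\{i\}}}]+[U_{\cc_{\{j\}}}]-[U_{\cc_{\{i,j\}}}]$, which is exactly what is required.

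The only case with genuine content is $S_{\cc}=\{1,2,3\}$. Here I would set $\Sigma=U_{\cc_{\{1\}}}+U_{\cc_{\{2\}}}+U_{\cc_{\{3\}}}$, group it as $(U_{\cc_{\{1\}}}+U_{\cc_{\{2\}}})+U_{\cc_{\{3\}}}$, and apply the two-subspace identity; this introduces the term $(U_{\cc_{\{1\}}}+U_{\cc_{\{2\}}})\cap U_{\cc_{\{3\}}}$, which by Lemma~\ref{lem:distributive} equals $U_{\cc_{\{1,3\}}}+U_{\cc_{\{2,3\}}}$. Expanding $[U_{\cc_{\{1\}}}+U_{\cc_{\{2\}}}]$ and $[U_{\cc_{\{1,3\}}}+U_{\cc_{\{2,3\}}}]$ by the two-subspace identity once more, and using $U_{\cc_{\{1,3\}}}\cap U_{\cc_{\{2,3\}}}=U_{\cc_{\{1,2,3\}}}$ from the intersection lemma, the seven resulting terms collapse to
\[
[\Sigma]=\sum_{i}[U_{\cc_{\{i\}}}]-\sum_{i<j}[U_{\cc_{\{i,j\}}}]+[U_{\cc_{\{1,2,3\}}}],
\]
which is precisely $\sum_{\emptyset\ne I\subseteq S_{\cc}}(-1)^{|I|+1}[U_{\cc_{I}}]$. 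Substituting back into $[V_{\cc}]=[U_{\cc_{\emptyset}}]-[\Sigma]$ gives the proposition.

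I expect the invocation of Lemma~\ref{lem:distributive} to be the crux: the subrepresentations $U_{\cc_{\{i\}}}$ need not generate a distributive lattice, so a naive inclusion--exclusion can fail, and it is exactly the distributivity relation of that lemma (resting in turn on the commuting-product identities $C_{\cc_{\{i\}}}C_{\cc_{\{3\}}}=C_{\cc_{\{i,3\}}}$) that licenses the computation. One should also check that the chosen grouping of the three subspaces matches the hypotheses of Lemma~\ref{lem:distributive}, but with the grouping above this is automatic; everything else is routine Grothendieck-group bookkeeping.
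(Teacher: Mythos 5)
Your proposal is correct and follows essentially the same route as the paper: the same case split on $|S_{\cc}|$, the same two-subspace identity $[A+B]=[A]+[B]-[A\cap B]$ (the paper phrases it via $A+B\simeq \bigl(A/(A\cap B)\bigr)\oplus B$), and the same grouping $(U_{\cc_{\{1\}}}+U_{\cc_{\{2\}}})+U_{\cc_{\{3\}}}$ with Lemma~\ref{lem:distributive} supplying the crucial identification of the intersection. Your closing remark correctly pinpoints the distributivity lemma as the non-routine ingredient.
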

\begin{proof}
First note that if $S_{\cc}=\{i\}$ then $V_{\cc} = U_{\cc}/U_{\cc_{i}}$ so clearly
\begin{equation*}[V_{\cc}] = [U_{\cc}] - [U_{\cc_{\{i\}}}].\end{equation*}
Further, if $S_{c}=\{i,j\}$ then $U_{\cc_{\{i\}}} + U_{\cc_{\{j\}}} = X \oplus U_{\cc_{\{j\}}}$ where $X=U_{\cc_{\{i\}}}/(U_{\cc_{\{i\}}} \cap U_{\cc_{\{j\}}})$ and $U_{\cc_{\{i\}}} \cap U_{\cc_{\{j\}}}= U_{\cc_{\{i,j\}}}$.  This gives $[U_{\cc_{\{i\}}} + U_{\cc_{\{j\}}}]= [U_{\cc_{\{i\}}}] + [U_{\cc_{\{j\}}}] - [U_{\cc_{\{i,j\}}}]$ and $V_{\cc} = U_{\cc}/(U_{\cc_{\{i\}}} + U_{\cc_{\{j\}}})$ implies that
\begin{equation*}[V_{\cc}] = [U_{\cc}] - [U_{\cc_{\{i\}}}] - [U_{\cc_{\{j\}}}] + [U_{\cc_{\{i,j\}}}].\end{equation*}
Finally, if $S_{\cc} = \{1,2,3\}$ then $U_{\cc_{\{1\}}} + U_{\cc_{\{2\}}} + U_{\cc_{\{3\}}} = X \oplus U_{\cc_{\{3\}}}$ where on this occasion $X=(U_{\cc_{\{1\}}} + U_{\cc_{\{2\}}})/ ((U_{\cc_{\{1\}}} + U_{\cc_{\{2\}}})\cap U_{\cc_{\{3\}}})$.  From Lemma~\ref{lem:distributive} we know that $(U_{\cc_{\{1\}}} + U_{\cc_{\{2\}}} ) \cap U_{\cc_{\{3\}}} = U_{\cc_{\{1,3\}}} + U_{\cc_{\{2,3\}}}$ so using the same argument as before we see that
\begin{equation*}[X] = [U_{\cc_{\{1\}}}] + [U_{\cc_{\{2\}}}] - [U_{\cc_{\{1,2\}}}] - [U_{\cc_{\{1,3\}}}] - [U_{\cc_{\{2,3\}}}] + [U_{\cc_{\{1,2,3\}}}].\end{equation*}
Hence, $V_{\cc} = U_{\cc}/(U_{\cc_{\{1\}}} + U_{\cc_{\{2\}}} + U_{\cc_{\{3\}}})$ gives
\begin{equation*}[V_{\cc}] = [U_{\cc}] - [U_{\cc_{\{1\}}}] - [U_{\cc_{\{2\}}}] - [U_{\cc_{\{3\}}}] + [U_{\cc_{\{1,2\}}}] + [U_{\cc_{\{1,3\}}}] + [U_{\cc_{\{2,3\}}}] - [U_{\cc_{\{1,2,3\}}}]\end{equation*}
as required.
\end{proof}

The space of $K_{1}$-fixed vectors in $V$
\begin{equation*}V^{K_{1}}=\Ind_{C_{(1,1,1)}}^{K} 1\end{equation*}
is the pull-back to $K$ of the permutation representation $\Ind_{\mathbb{B}(\mathfrak{f})}^{\mathbb{G}(\mathfrak{f})} 1$ so its decomposition into irreducibles is well known.  Specifically,
\begin{equation*}V^{K_{1}} = V_{(0,0,0)} \oplus V_{(0,1,1)} \oplus V_{(1,0,1)} \oplus V_{(1,1,1)}\end{equation*}
where $[V_{(0,0,0)}]=[U_{(0,0,0)}]$ is the trivial representation; $[V_{(0,1,1)}]=[U_{(0,1,1)}]-[U_{(0,0,0)}]$ and $[V_{(1,0,1)}]=[U_{(1,0,1)}]-[U_{(0,0,0)}]$ are the equivalent irreducible constituents; and $[V_{(1,1,1)}]=[U_{(1,1,1)}]-[U_{(0,1,1)}]-[U_{(1,0,1)}]+[U_{(0,0,0)}]$ corresponds to the Steinberg representation which is irreducible with multiplicity $1$.

More generally, we can use Proposition~\ref{prop:altsum} to calculate the intertwining number between two quotients $V_{\cc}$ and $V_{\dd}$ as an alternating sum involving the intertwining numbers between various $U_{\cc}$ and $U_{\dd}$
\begin{equation*}
\I(V_{\cc},V_{\dd}) = \sum_{I\subseteq S_{\cc},\ J\subseteq S_{\dd}} (-1)^{|I|+|J|} \I(U_{\cc_{I}},U_{\dd_{J}}).
\end{equation*}
However, since $U_{\cc_{I}}$ and $U_{\dd_{J}}$ are the permutation representations on $C_{\cc_{I}}$ and $C_{\dd_{J}}$ respectively, we have $\I(U_{\cc_{I}},U_{\dd_{J}}) = |C_{\cc_{I}} \backslash K / C_{\dd_{J}}|$, the number of $(C_{\cc_{I}},C_{\dd_{J}})$-double cosets in $K$.  Thus we obtain the following.

\begin{corollary}  \label{cor:altsum}
Let $\cc, \dd\in \TT$, then
\begin{equation*}
\I(V_{\cc},V_{\dd}) = \sum_{I\subseteq S_{\cc},\ J\subseteq S_{\dd}} (-1)^{|I|+|J|} |C_{\cc_{I}} \backslash K / C_{\dd_{J}}|.
\end{equation*}
\end{corollary}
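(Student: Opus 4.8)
The plan is to deduce Corollary~\ref{cor:altsum} directly from Proposition~\ref{prop:altsum} by passing to the Grothendieck group and exploiting the fact that intertwining numbers of permutation representations count double cosets. First I would observe that the intertwining number $\I(-,-)$, being the dimension of a $\mathrm{Hom}$-space, is additive in short exact sequences in each variable and so descends to a bilinear pairing $\mathscr{K}_{0}(K)\times \mathscr{K}_{0}(K)\to \Z$; concretely, $\I$ is well defined on classes $[V]$ and satisfies $\I([V],[W])=\I([W'],[W])$ whenever $[V]=[W']$ in $\mathscr{K}_{0}(K)$. This uses nothing beyond the defining relations of $\mathscr{K}_{0}(K)$ together with the exactness of $\mathrm{Hom}_{K}(-,W)$ and $\mathrm{Hom}_{K}(V,-)$ on the relevant finite-dimensional (hence semisimple, as $K$ is compact and we are over $\C$) representations.

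Next I would apply Proposition~\ref{prop:altsum} to both arguments. Writing $[V_{\cc}] = \sum_{I\subseteq S_{\cc}} (-1)^{|I|}[U_{\cc_{I}}]$ and $[V_{\dd}] = \sum_{J\subseteq S_{\dd}} (-1)^{|J|}[U_{\dd_{J}}]$ and expanding bilinearly gives
\begin{equation*}
\I(V_{\cc},V_{\dd}) = \sum_{I\subseteq S_{\cc},\ J\subseteq S_{\dd}} (-1)^{|I|+|J|}\,\I(U_{\cc_{I}},U_{\dd_{J}}).
\end{equation*}
It then remains to identify $\I(U_{\cc_{I}},U_{\dd_{J}})$ with $|C_{\cc_{I}}\backslash K/C_{\dd_{J}}|$. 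Since $U_{\cc_{I}} = \Ind_{C_{\cc_{I}}}^{K} 1$ and $U_{\dd_{J}} = \Ind_{C_{\dd_{J}}}^{K} 1$ are the permutation representations on the coset spaces $K/C_{\cc_{I}}$ and $K/C_{\dd_{J}}$, Frobenius reciprocity together with Mackey's formula yields $\mathrm{Hom}_{K}(\Ind_{C_{\cc_{I}}}^{K}1, \Ind_{C_{\dd_{J}}}^{K}1) \cong \mathrm{Hom}_{C_{\cc_{I}}}(1, \mathrm{Res}\,\Ind_{C_{\dd_{J}}}^{K}1)$, whose dimension is exactly the number of $(C_{\cc_{I}}, C_{\dd_{J}})$-double cosets in $K$ (each contributing a one-dimensional space of intertwiners since the inducing characters are trivial). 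Substituting this identification into the displayed sum gives the claimed formula, completing the proof.

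I do not expect any serious obstacle here: the statement is essentially a bookkeeping consequence of Proposition~\ref{prop:altsum}, and the only point requiring a word of justification is the passage of $\I$ to the Grothendieck group, which is immediate from semisimplicity of $K$-representations. If anything, the mild subtlety is to confirm that all the $U_{\cc_{I}}$ and $V_{\cc}$ are genuinely finitely generated (indeed finite-dimensional) $K$-representations so that they define classes in $\mathscr{K}_{0}(K)$ and the pairing is finite — but this is already guaranteed, since each $C_{\cc_{I}}$ is a compact open, hence finite-index, subgroup of $K$, so each $U_{\cc_{I}}$ is finite-dimensional, and hence so is each $V_{\cc}$.
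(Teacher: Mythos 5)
Your proposal is correct and follows essentially the same route as the paper: apply Proposition~\ref{prop:altsum} in both variables, use bilinearity of $\I$ on the Grothendieck group, and identify $\I(U_{\cc_{I}},U_{\dd_{J}})$ with the number of $(C_{\cc_{I}},C_{\dd_{J}})$-double cosets. The paper treats these steps as immediate; you have merely spelled out the standard justifications (semisimplicity, Frobenius reciprocity, Mackey), which is fine.
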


Finally, we note that Proposition~\ref{prop:altsum} also allows us to determine the dimensions of the $V_{\cc}$ for $\cc\in \TT$ with $c_{3}>1$.  If we let $\left|\cc\right| = c_{1}+c_{2}+c_{3}$ then
\begin{equation*}\dim U_{\cc} = [K:C_{\cc}] = \left\{\begin{array}{ll}
(q+1)(q^2+q+1)q^{\left|\cc\right|-3} & \text{if $c_{1},c_{2}>0$};\\
(q^2+q+1)q^{\left|\cc\right|-2} & \text{if $c_{1}=0$ or $c_{2}=0$}
\end{array}\right.
\end{equation*}
so, writing $c_{3}=c_{1}+c_{2}-k$ for some $0\leq k \leq \min\{c_{1},c_{2}\}$, we have
\begin{equation*}
\dim V_{\cc} = \left\{\begin{array}{ll}
(q-1)(q+1)(q^2+q+1)q^{\left|\cc\right|-4}& \text{if $k=0$};  \\
(q-1)(q-2)(q+1)(q^2+q+1)q^{\left|\cc\right|-5} & \text{if $k=1$}; \\
(q-1)^3(q+1)(q^2+q+1)q^{\left|\cc\right|-6} & \text{if $1<k<\min\{c_{1},c_{2}\}$};\\
(q-1)^2(q+1)(q^2+q+1)q^{\left|\cc\right|-5}   & \text{if $k=\min\{c_{1},c_{2}\}$}.
\end{array}\right.
\end{equation*}


\section{(B,B)-double cosets} \label{sec:bb}

It is clear from Corollary \ref{cor:altsum} that we need to describe the $(C_{\cc},C_{\dd})$-double coset structure of $K$.  However, before tackling the general case we examine the double cosets of $K$ with respect to the subgroup $B$ of upper triangular matrices.   These, and indeed the double cosets in the case where $\cc=(c,c,c)=\dd$, have recently been described by Onn, Prasad and Vaserstein \cite{opv}.

Let $W=\{ 1 , s_{1}, s_{2}, s_{1}s_{2}, s_{2}s_{1}, w_{0}\}$ denote the group of permutation matrices in K where $s_{i}$ corresponds to the transposition $(i\ \ i+1)$ and $w_{0}$ is the element of maximal length.  From the Bruhat decomposition of $\mathrm{GL}(3,\mathfrak{f})$ we can choose our $(B,B)$-double coset representatives to be of the form $wk$ for some $w\in W$ and $k\in K_{1}$.  If we let $U^{-}$ denote the subgroup of lower unitriangular matrices in $K$, then the decomposition $K_{1}=(K_{1}\cap U^{-})(K\cap B)$ means that we may take $k\in K_{1}\cap U^{-}$.  Further, we have $U^{-}=V_{w}^{-}V_{w}$ where
\begin{equation*}V_{w} = \left\{ [k_{ij}]\in U^{-} : k_{ij}=0 \text{ if } w(i)<w(j)\right\}\end{equation*}
and
\begin{equation*}V_{w}^{-} = \left\{ [k_{ij}]\in U^{-} : k_{ij}=0 \text{ if } w(i)>w(j)\right\}.\end{equation*}
Thus, writing $k=k_{1}k_{2}$ with $k_{1}\in V_{w}^{-}$, $k_{2}\in V_{w}$ we see that $BwkB = Bwk_{2}B$ since $wk_{1}w^{-1}\in U$.  We have therefore obtained the following special case of \cite{hill}*{Proposition 2.6}.

\begin{lemma} \label{lem:BBhill}
Every $(B,B)$-double coset representative in $K$ can be chosen of the form $wk$ for some $w\in W$ and $k\in V_{w}$.
\end{lemma}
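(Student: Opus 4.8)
The plan is to follow the strategy sketched informally in the paragraph preceding the statement and make it rigorous. The goal is to show that every $(B,B)$-double coset in $K$ contains a representative of the form $wk$ with $w \in W$ and $k \in V_{w}$. First I would recall the Bruhat decomposition of $\mathrm{GL}(3,\f)$: reducing $K$ modulo $K_{1}$ gives $\mathrm{GL}(3,\f)$, which decomposes as a disjoint union $\bigsqcup_{w\in W} \overline{B} w \overline{B}$ over the image $\overline{B}$ of $B$. Lifting back, every element of $K$ lies in $B w K_{1} B$ for some $w\in W$, so every $(B,B)$-double coset meets some $BwK_{1}B$, and we may take a representative of the form $wk$ with $k\in K_{1}$.

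Next I would use the Iwahori-type factorization of the first congruence subgroup. Since $K_{1} = 1 + M_{3,3}(\P)$ and $\P$ is in the radical, the product map $(K_{1}\cap U^{-})\times(K_{1}\cap B)\to K_{1}$ is a bijection; this is a direct matrix computation, writing an element of $K_{1}$ as lower-unitriangular times upper-triangular via Gaussian elimination over $\R$ and checking that the entries stay in $\P$ (or the diagonal in $1+\P$). Absorbing the $K_{1}\cap B\subseteq B$ factor on the right, we reduce to representatives $wk$ with $k\in K_{1}\cap U^{-}$.

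Then I would invoke the standard factorization $U^{-} = V_{w}^{-}V_{w}$, where $V_{w}$ and $V_{w}^{-}$ are the subgroups cut out by the root subgroups that $w$ conjugates into $U$ and $U^{-}$ respectively. For $\mathrm{GL}(3)$ this is again an explicit product decomposition of the three lower off-diagonal entries, valid over $\R$; intersecting with $K_{1}$ preserves it. Writing $k = k_{1}k_{2}$ with $k_{1}\in V_{w}^{-}\cap K_{1}$ and $k_{2}\in V_{w}\cap K_{1}\subseteq V_{w}$, the key point is that $wk_{1}w^{-1}\in U\subseteq B$ by the defining property of $V_{w}^{-}$, so $BwkB = B(wk_{1}w^{-1})wk_{2}B = Bwk_{2}B$. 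This yields the desired representative $wk_{2}$ with $k_{2}\in V_{w}$, giving the $\mathrm{GL}(3)$ case of \cite{hill}*{Proposition 2.6}.

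The main obstacle, such as it is, is bookkeeping rather than depth: one must verify carefully that the Gaussian-elimination factorizations of $K_{1}$ and of $U^{-}$ are genuine bijections with the entries landing in the correct ideals $\P$ and $\R$ — in particular that no denominators appear, which holds because all the pivots are units. One should also confirm that the product $V_{w}^{-}V_{w}$ exhausts $U^{-}$ for each of the six Weyl elements (the only nontrivial checks being $w = s_{1}s_{2}$, $s_{2}s_{1}$, and $w_{0}$, where $V_{w}$ has dimension $2$ and $V_{w}^{-}$ dimension $1$, or vice versa), and that conjugation by $w$ really does send $V_{w}^{-}$ into $U$. Since all of these are small explicit computations with $3\times 3$ matrices over $\R$, I expect the proof to be short, essentially the three-line argument given in the excerpt with the factorization lemmas made explicit.
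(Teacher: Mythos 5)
Your proposal is correct and follows exactly the argument the paper gives: Bruhat decomposition modulo $K_{1}$, the Iwahori-type factorization $K_{1}=(K_{1}\cap U^{-})(K_{1}\cap B)$, then $U^{-}=V_{w}^{-}V_{w}$ with $wk_{1}w^{-1}\in U$ absorbed into $B$. The extra verifications you flag (that the factorizations are bijections over $\R$ and that conjugation by $w$ sends $V_{w}^{-}$ into $U$) are exactly the routine checks the paper leaves implicit.
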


While Lemma~\ref{lem:BBhill} shows that there is exactly one double coset corresponding to $w_{0}$, it does not give any information about the double cosets lying in the Iwahori subgroup $BK_{1}$.
Let $\nn = \mathbb{Z}_{+} \cup \{\infty\}$ with the convention that $a<\infty$ and $\infty+a = \infty-a=\infty$  for every $a\in \mathbb{Z}_{+}$.  Define the set of triples
\begin{equation*}\TT^{\infty} = \{ (a_{1},a_{2},a_{3}) \in \nn^{3} : a_{1},a_{2}\leq a_{3}\}\end{equation*}
and for each $\aa \in \TT^{\infty}$, $x\in \R^{\times}$ consider the element
\begin{equation*}t_{\aa,x} =
\left[\begin{array}{ccc}
1            & 0           & 0 \\
\pi^{a_{1}}  & 1           & 0 \\
\pi^{a_{3}}x & \pi^{a_{2}} & 1
\end{array}\right]\end{equation*}
where we take $\pi^{\infty} = 0$ and so set $\val(0)=\infty$.  In the following we identify $\R/\P^{i}$ with a set of representatives in $\R$ chosen so that they contain the representatives corresponding to $\R/\P^{j}$ for each $j<i$.

\begin{proposition} \label{prop:BBiwahori}
A complete set of $(B,B)$-double coset representatives in $BK_{1}$~is
\begin{equation*}
\RR^{1} = \left\{t_{\aa,x} :
\aa\in\TT^{\infty}, x\in \XX^{\aa}\right\}
\end{equation*}
where
\begin{equation*}\XX^{\aa} = \left\{ \begin{array}{ll}
\{ 1 \}
& \text{if $a_{3} = \infty$}; \\
\left(\R/\P^{\min\{a_{1},a_{2},a_{3}-a_{1},a_{3}-a_{2}\}}\right)^{\times}
& \text{if $a_{1}+a_{2} \neq a_{3}$ and $a_{3}< \infty$}; \\
\bigcup_{i=0}^{\infty} (1+\pi^{i} \R^{\times}) \cap (\R/\P^{\min\{a_{1},a_{2}\}+i})^{\times}
& \text{if $a_{1}+a_{2} = a_{3}$ and $a_{3} < \infty$}.
\end{array}\right.
\end{equation*}
\end{proposition}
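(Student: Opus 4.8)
The plan is to analyse the $(B,B)$-double cosets inside the Iwahori subgroup $BK_{1}$ by refining the Bruhat-type argument of Lemma~\ref{lem:BBhill} one more step. From that lemma, every double coset in $BK_{1}$ has a representative of the form $wk$ with $k\in V_{w}$; but $BK_{1}$ corresponds to $w=1$ in the Bruhat decomposition over $\mathfrak{f}$, so in fact we may take the representative to lie in $K_{1}\cap U^{-}$, i.e.\ to be a lower unitriangular matrix congruent to the identity mod $\P$. Writing such a matrix with entries $\pi^{a_{1}}u_{1}$, $\pi^{a_{2}}u_{2}$, $\pi^{a_{3}}u_{3}$ (with $u_{i}\in\R^{\times}$ or the entry zero) in the $(2,1)$, $(3,2)$, $(3,1)$ positions, the first task is to show that left and right multiplication by elements of $B$ lets us normalise $u_{1}=u_{2}=1$ and reduces the matrix to the form $t_{\aa,x}$ for some $\aa\in\TT^{\infty}$, $x\in\R^{\times}$; the constraint $a_{1},a_{2}\le a_{3}$ will emerge because if, say, $a_{3}<a_{1}$ then the $(3,1)$ entry can be cleared using the $(2,1)$ entry after a suitable row/column operation, contradicting minimality (equivalently, one lands in a smaller-$\aa$ coset).

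The heart of the proof is then to compute exactly when two normalised representatives $t_{\aa,x}$ and $t_{\aa',x'}$ lie in the same $(B,B)$-double coset. I would do this by direct matrix manipulation: suppose $b\, t_{\aa,x} = t_{\aa',x'}\, b'$ with $b,b'\in B$, write $b=(b_{ij})$ upper triangular with units on the diagonal, expand both sides, and compare valuations and leading terms entry by entry. Comparing the strictly-lower-triangular entries forces $\aa=\aa'$ (the valuations $a_{1},a_{2},a_{3}$ are invariants), and then the equations for the off-diagonal entries of $b$ and $b'$, read modulo successive powers of $\P$, pin down precisely which residues of $x$ are reachable. This is where the three cases arise. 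When $a_{3}=\infty$ the $(3,1)$ entry vanishes and $x$ is irrelevant, giving $\XX^{\aa}=\{1\}$. When $a_{3}<\infty$ and $a_{1}+a_{2}\ne a_{3}$, the available conjugations can alter $x$ by an arbitrary unit modulo $\P^{\min\{a_{1},a_{2},a_{3}-a_{1},a_{3}-a_{2}\}}$ — the four exponents being exactly the valuations at which the available $B$-moves (scaling rows/columns $1$ and $3$, and the two relevant unipotent entries, which interact with the $(2,1)$ and $(3,2)$ entries) start to affect the $(3,1)$ entry — so $x$ is only well defined in that quotient, and every such unit value is attained. The delicate case is $a_{1}+a_{2}=a_{3}$: here a would-be move that shifts $x$ also, at the same valuation, produces a correction term coming from the product of the $(2,1)$ and $(3,2)$ entries (whose valuations add up to $a_{3}$), so $x$ can only be changed by a unit of the form $1+\pi^{i}(\cdots)$; tracking the order of vanishing of that correction against the freedom in the $B$-entries yields the stated union $\bigcup_{i\ge 0}(1+\pi^{i}\R^{\times})\cap(\R/\P^{\min\{a_{1},a_{2}\}+i})^{\times}$.

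Finally I would check that the proposed set $\RR^{1}$ is genuinely a set of representatives in $\R$ (using the compatible choice of coset representatives for the $\R/\P^{i}$ fixed just before the statement), that distinct elements of $\RR^{1}$ are pairwise inequivalent — which is exactly the ``only if'' direction of the double-coset comparison above — and that it exhausts $BK_{1}$, which follows since every lower-unitriangular element of $K_{1}$ has some $(\aa,x)$ and the normalisation procedure in the first step lands it on an element of $\RR^{1}$. The main obstacle is the $a_{1}+a_{2}=a_{3}$ case: here the naive valuation bookkeeping is not enough because the leading terms of two competing contributions to the $(3,1)$ entry coincide and must be compared, so one has to carry the expansion to the next order and isolate the precise unit constraint. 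Everything else is a routine, if somewhat lengthy, bookkeeping of row and column operations, organised by the valuation at which each operation first disturbs the $(3,1)$ entry.
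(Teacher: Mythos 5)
Your proposal follows essentially the same route as the paper: reduce via Lemma~\ref{lem:BBhill} to lower unitriangular representatives in $K_{1}\cap U^{-}$, normalise by $B$-operations to the form $t_{\aa,x}$ with $a_{1},a_{2}\le a_{3}$, and then determine the admissible residues of $x$ by expanding $b\,t_{\aa,x}=t_{\aa',x'}\,b'$ entry by entry, with the case $a_{1}+a_{2}=a_{3}$ requiring a second-order comparison exactly as you describe. One detail in your normalisation step is backwards: upper-triangular row and column operations leave the $(3,1)$ entry fixed up to unit scaling (indeed $g_{33}t_{31}=t'_{31}g'_{11}$, which is why $a_{3}$ is an invariant of the coset), so you cannot clear the $(3,1)$ entry using the $(2,1)$ entry; rather, one adds multiples of $t_{31}$ to $t_{21}$ and $t_{32}$ to force $\val(t_{21}),\val(t_{32})\le\val(t_{31})$. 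With that correction the argument is the paper's.
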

\begin{proof}
From Lemma~\ref{lem:BBhill} we can choose our representative $t=[t_{ij}]$ to lie in $K_{1}\cap U^{-}$.  Indeed, since left and right multiplication by elements of $B$ allows us to add multiples of $t_{31}$ to $t_{21}$ and $t_{32}$, we may assume that the lower triangular entries of $t$ are such that
$\max\{ \val(t_{21}),\val(t_{32})\} \leq \val(t_{31}).$
Further, conjugating by elements of $T$ enables us to independently scale $t_{21}$ and $t_{32}$ by elements of $\R^{\times}$.  We therefore obtain a representative of the form $t_{\aa,x}$ for some $\aa\in \TT^{\infty}$ and $x\in \R^{\times}$.

To show that different triples from $\TT^{\infty}$ correspond to different double cosets suppose that $g=[g_{ij}]$ and $g'=[g'_{ij}]$ are elements of $B$ with $gt_{\aa,x}=t_{\bb,y}g'$ for some $\aa,\bb\in \TT^{\infty}$ and $x,y\in \R^{\times}$.  The lower triangular entries give the equations
\begin{eqnarray*}
\pi^{a_{1}}g_{22} + \pi^{a_{3}}xg_{23}
&=& \pi^{b_{1}}g_{11}' \\
\pi^{a_{2}}g_{33}
&=& \pi^{b_{3}}g_{12}'y + \pi^{b_{2}}g_{22}' \\
\pi^{a_{3}}xg_{33}
&=& \pi^{b_{3}}yg_{11}'.
\end{eqnarray*}
The third equation clearly implies that $a_{3}=b_{3}$ while the first equation gives $a_{1}\leq b_{1}$ with $a_{1}=b_{1}$ whenever $a_{1}\neq a_{3}$.  However, if $a_{1}=a_{3}$ then $b_{3}\geq b_{1}\geq a_{1} = a_{3} = b_{3}$ and again $b_{1}=a_{1}$.  Similarly, $a_{2}=b_{2}$ from the second equation so $\aa=\bb$.

We now fix an $\aa\in \TT^{\infty}$ and address the admissible range of values for $x$.  If $a_{3}=\infty$ then $\pi^{a_{3}} = 0$ and it is clear that we may take $\XX^{\aa} = \{ 1 \}$ so we will assume that $a_{3}<\infty$.  Let $x,y\in \R^{\times}$ and suppose that we are able to choose elements $g_{11},g_{22},g_{33}\in \R^{\times}$ and $g_{12}, g_{13}, g_{23}\in \R$ in such a way that the following three equations hold:
\begin{eqnarray}
g_{11}      &=& g_{22}-\pi^{a_{1}}g_{12} - \pi^{a_{3}}xg_{13}  + \pi^{a_{3}-a_{1}}xg_{23} \label{E:BBg11}\\
g_{33}      &=& g_{22}-\pi^{a_{1}}g_{12} + \pi^{a_{2}}g_{23} -  \pi^{a_{1}+a_{2}}g_{13} + \pi^{a_{3}}yg_{13} +\pi^{a_{3}-a_{2}}yg_{12} \label{E:BBg33}\\
\!\!\!\!(x-y)g_{22} &=& \left(\pi^{-a_{2}}g_{12}-\pi^{-a_{1}}g_{23} +g_{13}\right)x(\pi^{a_{1}+a_{2}}-\pi^{a_{3}}y). \label{E:BBg22}
\end{eqnarray}
Then setting
\begin{equation*}\begin{array}{rclcrcl}
g_{11}' &=& g_{11}+\pi^{a_{1}}g_{12} + \pi^{a_{3}}xg_{13} & \qquad &
g_{12}' &=& g_{12}+\pi^{a_{2}}g_{13} \\
g_{22}' &=& g_{22}-\pi^{a_{1}}g_{12} + \pi^{a_{2}}g_{23} - \pi^{a_{1}+a_{2}}g_{13} & &
g_{13}' &=& g_{13} \\
g_{33}' &=& g_{33} - \pi^{a_{2}}g_{23} +\pi^{a_{1}+a_{2}}g_{13} - \pi^{a_{3}}yg_{13} & &
g_{23}' &=& g_{23} - \pi^{a_{1}}g_{13}
\end{array}\end{equation*}
gives elements $g=[g_{ij}]$ and $g'=[g'_{ij}]$ of $B$ with
\begin{equation*}
gt_{\aa,x} = t_{\aa,y} g'.
\end{equation*}

On the other hand, given $x,y\in \R^{\times}$ we see that if $g=[g_{ij}]$ and $g'=[g'_{ij}]$ are elements of $B$ with $gt_{\aa,x} = t_{\aa,y}g'$ then (\ref{E:BBg11}\ndash\ref{E:BBg22}) hold.  Hence $t_{\aa,x}$ and $t_{\aa,y}$ represent the same double coset precisely when such solutions exist.

First suppose that $a_{1}+a_{2}\neq a_{3}$.  If $t_{\aa,x}$ and $t_{\aa,y}$ represent the same double coset for distinct $x,y\in \R^{\times}$ then from (\ref{E:BBg22}) we see that
\begin{equation} \label{E:BBvalneq}
\val(x-y) \geq \min\{a_{1},a_{2},a_{3}-a_{1},a_{3}-a_{2}\}.
\end{equation}

Conversely, suppose that we have distinct $x,y\in \R^{\times}$ so that (\ref{E:BBvalneq}) holds.
If the minimum occurs for $a_{1}$ then $a_{3}-a_{2}>a_{1}$, since $a_{1}+a_{2}\neq a_{3}$, and we have $\val(\pi^{a_{1}+a_{2}}-\pi^{a_{3}}y) = a_{1}+a_{2}$.  Setting $g_{23}$ and $g_{13}$ both to be zero and choosing $g_{12}$ with $\val(g_{12})=\val(x-y)=a_{1}$ will give $g_{22}\in \R^{\times}$ by (\ref{E:BBg22}).  Further, $g_{11},g_{33}\in \R^{\times}$ by (\ref{E:BBg11}) and (\ref{E:BBg33}) since $a_{3}-a_{2}>1$.  If the minimum occurs for $a_{3}-a_{1}$ then $a_{3}-a_{1}<a_{2}$ and $\val(\pi^{a_{1}+a_{2}}-\pi^{a_{3}}y) = a_{3}$.  Taking $g_{12}$ and $g_{13}$ to be zero and $g_{23}$ such that $\val(g_{23})=\val(x-y)-(a_{3}-a_{1})$ gives $g_{22}\in \R^{\times}$ and, provided that we make the specific choice $g_{23}=y-x$ when $a_{3}=a_{1}$, we will also have $g_{11},g_{33}\in \R^{\times}$.  The arguments when the minimum is $a_{2}$ or $a_{3}-a_{2}$ are similar and so we obtain a solution of (\ref{E:BBg11}\ndash\ref{E:BBg22}) in each case.  Hence for every $\aa\in \TT^{\infty}$ with $a_{1}+a_{2}\neq a_{3}$ we may take a representative $t_{\aa,x}$ with $x$ lying in the set
\begin{equation*}\XX^{\aa} = \R/\P^{\min\{a_{1},a_{2},a_{3}-a_{1},a_{3}-a_{2}\}}\end{equation*}
and distinct elements of this set give distinct double cosets.

Now suppose that $a_{1}+a_{2}=a_{3}$ and that $t_{\aa,x}$ and $t_{\aa,y}$ represent the same double coset for distinct elements $x,y\in \R^{\times}$.
If $\val(1-x)>\val(1-y)$ then $\val(x-y) = \val((1-y)-(1-x)) = \val(1-y)$
and if $\val(1-x)<\val(1-y)$ then $\val(x-y) = \val(1-x)<\val(1-y)$.  However, from (\ref{E:BBg22}) we know that
$\val(x-y) \geq \min\{a_{1},a_{2}\} + \val(1-y) > \val(1-y).$
Therefore, we must have
\begin{equation}\label{E:BBvaleq}
\val(1-x)=\val(1-y)=i\text{\quad and\quad}\val(x-y)\geq \min\{a_{1},a_{2}\}+i.
\end{equation}

Conversely, let $x,y\in \R^{\times}$ be such that condition (\ref{E:BBvaleq}) holds. If $a_{1}\leq a_{2}$ then choosing $g_{23}=g_{13}=0$ and $g_{12}$ with $\val(g_{12}) = \val(x-y)-a_{1}-i$ gives $g_{22}\in \R^{\times}$ and $g_{11},g_{33}\in \R^{\times}$ since $a_{3}-a_{2}=a_{1}>0$. If $a_{2}\leq a_{1}$ there is a similar argument and we again have a solution of (\ref{E:BBg11}\ndash\ref{E:BBg22}) in each case.  Hence for every $\aa\in \TT^{\infty}$ with $a_{1}+a_{2}= a_{3}$ we may take a representative $t_{\aa,x}$ with $x$ from the set
\begin{equation*}\XX^{\aa} = \bigcup_{i=0}^{\infty} (1+\pi^{i}\R^{\times}) \cap (\R/\P^{\min\{a_{1},a_{2}\} +i})^{\times}\end{equation*}
and distinct elements of this set give distinct double cosets.
\end{proof}

\begin{theorem} \label{thm:BBcosets}
A complete set of $(B,B)$-double cosets in $K$ is given by
\begin{equation*}
\RR = \{t_{\aa,x}, s_{1}^{(\alpha,\beta)},s_{2}^{(\alpha,\beta)},s_{1}s_{2}^{(\alpha)},s_{2}s_{1}^{(\alpha)},w_{0} :
\aa\in\TT^{\infty}, x\in \XX^{\aa}, \alpha,\beta\in \nn\}
\end{equation*}
where
\begin{eqnarray*}
t_{\aa,x} =
\left[\begin{array}{ccc}
1            & 0           & 0 \\
\pi^{a_{1}}  & 1           & 0 \\
\pi^{a_{3}}x & \pi^{a_{2}} & 1
\end{array}\right],\!
&\!
s_{1}^{(\alpha,\beta)} =
\left[\begin{array}{ccc}
0         & 1          & 0 \\
1         & 0          & 0 \\
\pi^\beta & \pi^\alpha & 1
\end{array}\right],\!
&\!
s_{2}^{(\alpha,\beta)} =
\left[\begin{array}{ccc}
1            & 0 & 0 \\
\pi^{\beta}  & 0 & 1 \\
\pi^{\alpha} & 1 & 0
\end{array}\right],
\\
s_{1}s_{2}^{(\alpha)} =
\left[\begin{array}{ccc}
0            & 0 & 1 \\
1            & 0 & 0 \\
\pi^{\alpha} & 1 & 0
\end{array}\right],\!
&\!
s_{2}s_{1}^{(\beta)} =
\left[\begin{array}{ccc}
0 & 1           & 0 \\
0 & \pi^{\alpha}& 1 \\
1 & 0           & 0
\end{array}\right],\!
&\!
w_{0} =
\left[\begin{array}{ccc}
0 & 0 & 1 \\
0 & 1 & 0 \\
1 & 0 & 0
\end{array}\right].
\end{eqnarray*}
\end{theorem}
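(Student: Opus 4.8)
The plan is to sort the $(B,B)$-double cosets of $K$ according to which Bruhat cell of $\mathrm{GL}(3,\f)$ their representatives meet on reduction modulo $\P$.  Since reduction carries $B$ onto $\mathbb{B}(\f)$ and $K$ onto $\mathrm{GL}(3,\f)$, two $(B,B)$-double cosets whose representatives reduce into distinct cells $\mathbb{B}(\f)w\mathbb{B}(\f)$ are themselves distinct; and the proof of Lemma~\ref{lem:BBhill} in fact furnishes, for every double coset, a representative $wk$ with $w\in W$ and $k\in V_{w}\cap K_{1}$, which reduces to the permutation matrix $w$ and hence meets $\mathbb{B}(\f)w\mathbb{B}(\f)$.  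It therefore remains only, for each of the six $w\in W$, to enumerate the double cosets admitting a representative of the shape $wk$ with $k\in V_{w}\cap K_{1}$, and to see that the resulting six lists are jointly irredundant.

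Two of the cases cost nothing.  For $w=1$ we have $V_{1}\cap K_{1}=K_{1}\cap U^{-}$, so the double cosets here are precisely those contained in $BK_{1}$, which Proposition~\ref{prop:BBiwahori} enumerates as $\RR^{1}=\{t_{\aa,x}\}$; and $V_{w_{0}}=\{1\}$, so $Bw_{0}B$ is the single double coset represented by $w_{0}$.  When $w\in\{s_{1},s_{2}\}$ the group $V_{w}\cap K_{1}$ is described by two matrix entries, each ranging over $\P$: left multiplication by a suitable diagonal element of $T$, followed by right multiplication by a second diagonal element chosen to restore the shape $wk$, rescales these two entries independently by arbitrary units, so we may take each to be a power of $\pi$, obtaining the representatives $s_{1}^{(\alpha,\beta)}$, $s_{2}^{(\alpha,\beta)}$ with $\alpha,\beta\in\nn$ (a zero entry recorded as $\infty$).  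The same argument applied to the single free entry when $w\in\{s_{1}s_{2},s_{2}s_{1}\}$ gives $s_{1}s_{2}^{(\alpha)}$, $s_{2}s_{1}^{(\alpha)}$ with $\alpha\in\nn$.

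It remains to separate the parameters within these families.  Suppose $b\,s_{1}^{(\alpha,\beta)}=s_{1}^{(\alpha',\beta')}b'$ with $b,b'\in B$.  Comparing the $(3,1)$-entries gives $b_{33}\pi^{\beta}=\pi^{\beta'}b_{11}'$ with $b_{33},b_{11}'\in\R^{\times}$, hence $\beta=\beta'$; then, after using the $(1,2)$- and $(2,2)$-entries to eliminate $b_{22}'$ and $b_{12}'$, a valuation count in the $(3,2)$-entry (using that off-diagonal entries of $K_{1}$ lie in $\P$) forces $\alpha=\alpha'$, the configurations with a vanishing entry being settled directly via $\pi^{\infty}=0$.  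The one-parameter families are simpler: there the $(3,1)$-entry alone pins down the parameter.  Finally, the automorphism $\sigma(g)=w_{0}(g^{\mathrm{T}})^{-1}w_{0}$ preserves both $K$ and $B$ and interchanges the cells of $s_{1}$ with $s_{2}$ and of $s_{1}s_{2}$ with $s_{2}s_{1}$, so it transports the conclusions for $s_{1}$ and $s_{1}s_{2}$ to those for $s_{2}$ and $s_{2}s_{1}$.  Assembling the six lists produces $\RR$.

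The main obstacle should be this last distinctness step for $w\in\{s_{1},s_{2}\}$: although it is only a matter of tracking valuations through the scalar equations encoded in $b\,s_{1}^{(\alpha,\beta)}=s_{1}^{(\alpha',\beta')}b'$, care is needed to confirm that no accidental identifications occur --- in particular that, unlike the $w=1$ case of Proposition~\ref{prop:BBiwahori}, no inequality is forced between $\alpha$ and $\beta$ --- and to handle uniformly the configurations in which one or both of the two free entries is zero.  Everything else is a citation (for $w=1$), a one-line check (for $w=w_{0}$), or the routine rescaling by the torus.
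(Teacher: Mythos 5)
Your proposal is correct and follows essentially the same route as the paper: representatives $wk$ with $k\in V_{w}$ via Lemma~\ref{lem:BBhill}, torus rescaling of the free subdiagonal entries to powers of $\pi$, separation of distinct Weyl elements by the Bruhat decomposition over $\f$, Proposition~\ref{prop:BBiwahori} for the identity cell, and a valuation comparison in the scalar equations from $g\,s_{1}^{(\alpha,\beta)}=s_{1}^{(\alpha',\beta')}g'$ to separate parameters (your elimination of $g'_{12},g'_{22}$ reproduces exactly the paper's equations (\ref{eq:BBs1g11})--(\ref{eq:BBs1g22})). The only cosmetic difference is that you dispatch the $s_{2}$ and $s_{2}s_{1}$ cases by the outer automorphism $g\mapsto w_{0}(g^{\mathrm{T}})^{-1}w_{0}$ where the paper simply declares them analogous.
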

\begin{proof}
We have shown that each double coset has a representative of the form $wk$ for some $w\in W$, $k\in V_{w}$ and that, in particular, when $w=1$ we can take it to be $t_{\aa,x}$ with $\aa\in \TT^{\infty}$, $x\in \XX^{\aa}$.  If $w\neq 1$ then $k$ has at most two non-zero entries below the diagonal and we are able to independently scale these by any element of $\R^{\times}$ via left and right multiplication by $T$.  This means that each representative can be chosen from the set $\RR$ described above.

Representatives associated to distinct Weyl group elements must give distinct double cosets by the Bruhat decomposition of $\mathrm{GL}(3,\mathfrak{f})$.  Further, by Proposition~\ref{prop:BBiwahori} we know that distinct elements from $\RR^{1}=\{t_{\aa,x} : \aa\in \TT^{\infty}, x\in \XX^{\aa}\}$ give distinct double cosets.  Thus we need to show that different representatives from $\RR$ with the same non-trivial Weyl group element give different double cosets.  We will prove only the case when $w=s_{1}$ and remark that the remaining cases are analogous.

If $s_{1}^{(\alpha,\beta)}$ and $s_{1}^{(\alpha',\beta')}$ represent the same double coset for some $\alpha,\alpha',\beta,\beta'\in \overline{\mathbb{Z}}_{+}$ then there must be elements $g=[g_{ij}]$ and $g'=[g'_{ij}]$ of $B$ with $gs_{1}^{(\alpha,\beta)}=s_{1}^{(\alpha',\beta')}g'$.  This implies that the following two equations hold:
\begin{eqnarray}
\label{eq:BBs1g11}
\pi^{\alpha'}g_{11} &=& \pi^{\alpha}g_{33} - \pi^{\alpha+\alpha'}g_{13} - \pi^{\alpha+\beta'}g_{23} \\
\label{eq:BBs1g22}
\pi^{\beta'}g_{22} &=& \pi^{\beta}g_{33} - \pi^{\beta+\beta'}g_{23}.
\end{eqnarray}
However (\ref{eq:BBs1g11}) implies that $\alpha=\alpha'$ and (\ref{eq:BBs1g22}) implies that $\beta=\beta'$.  Hence if the pairs $(\alpha,\beta)$ and $(\alpha',\beta')$ are distinct then $s_{1}^{(\alpha,\beta)}$ and $s_{1}^{(\alpha',\beta')}$ represent different double cosets.\end{proof}


\section{General double cosets} \label{sec:cccd}

We now turn our attention to the case of $(C_{\cc},C_{\dd})$-double cosets for $\cc,\dd\in \TT$.  In this situation it is possible for the image of $C_{\cc}$ or $C_{\dd}$ in $\mathrm{GL}(3,\mathfrak{f})$ to be a proper parabolic subgroup and so different Weyl group elements could represent the same double coset.  To eliminate these duplications we introduce the subset $W_{\cc,\dd}$ of $W$ defined as follows:
\begin{enumerate}[(i)]
\item $W_{\cc,\dd}=\{1,s_{1},s_{2},s_{1}s_{2},s_{2}s_{1},w_{0}\}$ if $\cc,\dd\succeq (1,1,1)$;
\item $W_{\cc,\dd}=\{1,s_{1},w_{0}\}$ if $\cc=(c,0,c)$ with $c>0$ and $\dd\succeq (1,1,1)$, or vice versa;
\item $W_{\cc,\dd}=\{1,s_{2},w_{0}\}$ if $\cc=(0,c,c)$ with $c>0$ and $\dd\succeq (1,1,1)$, or vice versa;
\item $W_{\cc,\dd}=\{1,w_{0}\}$ if $\cc=(c,0,c)$ or $(0,c,c)$ and $\dd=(d,0,d)$ or $(0,d,d)$;
\item $W_{\cc,\dd}=\{1\}$ if $\cc=(0,0,0)$ or $\dd=(0,0,0)$.
\end{enumerate}
Since $W_{\cc,\dd}$ forms a set of representatives for the corresponding double cosets in $\mathrm{GL}(3,\mathfrak{f})$ this ensures that representatives associated to distinct elements of $W_{\cc,\dd}$ will indeed yield distinct double cosets.  We therefore need to identify a set $\RR_{\cc,\dd}^{w}$ of representatives associated to each $w\in W_{\cc,\dd}$.  As in the previous section, we begin by looking at the set $\RR_{\cc,\dd}^{1}$ of representatives corresponding to the trivial element of $W$.

\begin{definition} \label{D:Tcd}
Define the set of triples
\begin{equation*}\TT^{1}= \{ (a_{1},a_{2},a_{3}) \in \mathbb{Z}^{3} : 1 \leq a_{1},a_{2} \leq a_{3} \}\end{equation*}
and for any $\cc,\dd\in \TT$ let
\begin{equation} \label{E:Tcddef}
\TT_{\cc,\dd} = \{ \aa \in \TT^{1} : \aa\preceq \underline{\cc},\ \aa\preceq \underline{\dd} \text{ and } a_{3} \leq \min\{ a_{1}+\underline{c}_{2}, \underline{d}_{1} + a_{2}\}\}
\end{equation}
with the following exceptions:
\begin{equation} \label{E:Tdefexcept}
\TT_{\cc,\dd} = \left\{\begin{array}{ll}
\{(1,1,1)\}                             & \text{if $\cc=(0,0,0)$ or $\dd=(0,0,0)$};  \\
\{(1,a,a):a\leq \min\{c_{2},d_{2}\}\}   & \text{if $c_{1}=d_{1}=0$ and $c_{2},d_{2}>0$;} \\
\{(a,1,a):a\leq \min\{c_{1},d_{1}\}\}   & \text{if $c_{2}=d_{2}=0$ and $c_{1},d_{1}>0$.}
\end{array}\right.
\end{equation}
Here $\underline{\cc}=(\underline{c}_{1},\underline{c}_{2},\underline{c}_{3})$ where $\underline{c}_{i} = \max \{ c_{i}, 1\}$ for each $i$.
\end{definition}

\begin{lemma} \label{lem:tccdd}
Let $\cc,\dd\in \TT$, then each $t_{\aa,x}\in \RR_{\cc,\dd}^{1}$ may be chosen with $\aa\in \TT_{\cc,\dd}$.  Moreover, if $\aa,\bb\in \TT_{\cc,\dd}$ are distinct then $t_{\aa,x}$ and $t_{\bb,y}$ represent distinct double cosets.
\end{lemma}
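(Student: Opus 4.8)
The plan is to bootstrap from the $(B,B)$-classification of Proposition~\ref{prop:BBiwahori}. Since $B$ lies in both $C_{\cc}$ and $C_{\dd}$, every $(C_{\cc},C_{\dd})$-double coset is a union of $(B,B)$-double cosets, so one of trivial Weyl type already has a representative $t_{\aa,x}$ with $\aa\in\TT^{\infty}$ and $x\in\XX^{\aa}$. Beyond $B$, the group $C_{\cc}$ is generated by the elementary matrices $1+uE_{21}$ with $\val(u)\geq c_{1}$, $1+uE_{32}$ with $\val(u)\geq c_{2}$ and $1+uE_{31}$ with $\val(u)\geq c_{3}$ (with $E_{ij}$ the matrix unit), and similarly for $C_{\dd}$ with $\dd$ in place of $\cc$. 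Left- (respectively right-) multiplying $t_{\aa,x}$ by such an element effects an elementary row (respectively column) operation below the diagonal, and re-normalising by a diagonal element of $B$ restores the form $t_{\bb,y}$, possibly with a new $x$ which is irrelevant here.

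For the existence assertion I would start from $t_{\aa,x}$ and perform these reductions. Left-multiplication by $1+uE_{21}$ with $\val(u)=\underline{c}_{1}$, followed by re-normalisation, replaces $a_{1}$ by $\min\{a_{1},\underline{c}_{1}\}$ without disturbing the $(3,1)$-entry; the value $\underline{c}_{1}$ rather than $c_{1}$ is forced because when $c_{1}=0$ the $(2,1)$-entry must still be kept in $\P$ to stay in the trivial Weyl cell. Using $C_{\dd}$ likewise gives $a_{1}\leq\underline{d}_{1}$, and the $(3,2)$- and $(3,1)$-roots of $C_{\cc}$ and $C_{\dd}$ give $a_{2}\leq\min\{\underline{c}_{2},\underline{d}_{2}\}$ and $a_{3}\leq\min\{\underline{c}_{3},\underline{d}_{3}\}$. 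Finally, left-multiplication by $1+uE_{32}$ with $\val(u)=\underline{c}_{2}$ adds a term of valuation $a_{1}+\underline{c}_{2}$ to the $(3,1)$-entry, so $a_{3}$ can be brought down to $a_{1}+\underline{c}_{2}$, and right-multiplication by $1+uE_{21}$ over $C_{\dd}$ symmetrically forces $a_{3}\leq\underline{d}_{1}+a_{2}$. Since each step only decreases the $a_{i}$ the process terminates, and using $c_{1}\leq c_{3}$, $c_{2}\leq c_{3}$ and the analogues for $\dd$ one checks that the final triple still satisfies $1\leq a_{1},a_{2}\leq a_{3}$, hence lies in $\TT_{\cc,\dd}$. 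In the exceptional cases of Definition~\ref{D:Tcd} the image of $C_{\cc}$ or $C_{\dd}$ in $\mathrm{GL}(3,\f)$ is a proper parabolic, which amalgamates further Weyl cells and allows a unit in one of the elementary factors: this collapses $a_{3}$ to $a_{2}$ when $c_{1}=d_{1}=0$, to $a_{1}$ when $c_{2}=d_{2}=0$, and forces $\aa=(1,1,1)$ when $\cc$ or $\dd$ equals $(0,0,0)$, matching the lists there.

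For distinctness, suppose $\aa,\bb\in\TT_{\cc,\dd}$ and $gt_{\aa,x}=t_{\bb,y}g'$ with $g=[g_{ij}]\in C_{\cc}$ and $g'=[g'_{ij}]\in C_{\dd}$. In the principal case $\cc,\dd\succeq(1,1,1)$ all the $g_{ii}$, $g'_{ii}$ are units. Comparing the $(2,1)$-entries of the two sides and using $a_{1},b_{1}\leq\min\{\underline{c}_{1},\underline{d}_{1}\}$, a valuation count leaves only $a_{1}=b_{1}$: if, say, $b_{1}<a_{1}$, then $a_{1}\leq\underline{d}_{1}$ makes the right side have valuation exactly $b_{1}$, while $a_{1}\leq\underline{c}_{1}$ and $a_{1}\leq a_{3}$ make every term on the left have valuation $>b_{1}$. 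The $(3,2)$- and then the $(3,1)$-entries are handled in the same way: the constraints $a_{2}\leq\min\{\underline{c}_{2},\underline{d}_{2}\}$, $a_{3}\leq\min\{\underline{c}_{3},\underline{d}_{3},a_{1}+\underline{c}_{2},\underline{d}_{1}+a_{2}\}$ and their $\bb$-analogues are precisely what pushes the valuations of all the remaining terms strictly above $\min\{a_{2},b_{2}\}$, resp. $\min\{a_{3},b_{3}\}$, yielding $a_{2}=b_{2}$ and $a_{3}=b_{3}$. In the few degenerate configurations (for instance when $b_{2}=b_{3}$ and the possibly equal-valuation terms conspire to cancel) one descends to the next order in $\pi$, or checks directly against the small explicit list, to exclude $\aa\neq\bb$; the exceptional cases of Definition~\ref{D:Tcd} are likewise finite and done by hand.

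The step I expect to be the main obstacle is the reduction together with this boundary bookkeeping: realising each reduction by an explicit element of $C_{\cc}$ or $C_{\dd}$, tracking the diagonal re-normalisations, ordering the operations so that all four defining inequalities of $\TT_{\cc,\dd}$ hold simultaneously at the end, and correctly treating the cases where some $c_{i}$ or $d_{i}$ is $0$ or $1$ — which is exactly where the passage from $c_{i}$ to $\underline{c}_{i}$ and the exceptional lists of Definition~\ref{D:Tcd} originate. Once the inequalities defining $\TT_{\cc,\dd}$ are in place, the distinctness half is, apart from the degenerate cases just mentioned, a routine valuation count.
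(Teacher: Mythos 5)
Your proposal follows essentially the same route as the paper: reduce $\aa$ into $\TT_{\cc,\dd}$ by explicit elements of $C_{\cc}$ and $C_{\dd}$ (the paper simply asserts that $C_{\cc}t_{\aa,x}C_{\dd}$ contains $t_{\bb,y}$ with $b_{1}=\min\{a_{1},\underline{c}_{1},\underline{d}_{1}\}$, etc.), and then compare the three lower-triangular entries of $gt_{\aa,x}=t_{\bb,y}g'$ and count valuations, using the defining inequalities of $\TT_{\cc,\dd}$ together with the exceptional lists to force $\aa=\bb$. The only real difference is at the boundary configurations you flag (e.g.\ $b_{2}=b_{3}$): rather than descending to higher order in $\pi$, the paper observes that since neither $a_{i}$ nor $b_{i}$ can exceed the relevant minimum, either one of them is strictly below it (forcing equality by the valuation count) or both equal it, which disposes of these cases at once.
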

\begin{proof}
It is clear from Theorem~\ref{thm:BBcosets} that $\RR_{\cc,\dd}^{1}$ can be taken to be a subset of $\{t_{\aa,x} : \aa\in \TT^{1}, x\in \R^{\times}\}$.  One can show explicitly that for any $\aa\in \TT^{1}$ and $x\in \R^{\times}$ the double coset $C_{\cc}t_{\aa,x}C_{\dd}$ contains $t_{\bb,y}$ where $\bb\preceq\aa$ is defined by
\begin{eqnarray*}
b_{1} &=& \min\{a_{1},\underline{c}_{1},\underline{d}_{1} \}, \\
b_{2} &=& \min\{a_{2},\underline{c}_{2},\underline{d}_{2} \},\\
b_{3} &=& \min\{a_{3},c_{3},d_{3},a_{1}+\underline{c}_{2},\underline{d}_{1}+a_{2} \}.
\end{eqnarray*}
Thus, all double coset representatives in $\RR_{\cc,\dd}^{1}$ may be chosen with $\aa$ in the set defined by (\ref{E:Tcddef}).  When $c_{1}=d_{1}=0$ we can replace $\underline{d}_{1}+a_{2}$ by $a_{2}$ in the definition of $b_{3}$ and, similarly, when $c_{2}=d_{2}=0$ we can replace $a_{1}+\underline{c}_{2}$ by $a_{1}$.  In these exceptional cases we may therefore choose $\aa$ from one of the sets given in (\ref{E:Tdefexcept}).

We wish to show that distinct triples $\aa$ and $\bb$ from $\TT_{\cc,\dd}$ yield distinct double cosets so suppose that $g=[g_{ij}]\in C_{\cc}$ and $g'=[g'_{ij}]\in C_{\dd}$ are such that $g t_{\aa,x} = t_{\bb,y} g'$ for some $x,y\in \R^{\times}$.  Write $g_{21}=\gamma_{21}\pi^{c_{1}}$, $g_{21}'=\gamma_{21}'\pi^{d_{1}}$, $g_{32}=\gamma_{32}\pi^{c_{2}}$,  $g_{32}'=\gamma_{32}'\pi^{d_{2}}$, $g_{31}=\gamma_{31}\pi^{c_{3}}$  and  $g_{31}'=\gamma_{31}'\pi^{d_{3}}$ where $\gamma_{ij},\gamma_{ij}'\in \R$.  Comparing the lower triangular elements in the above product gives the following three equalities:
\begin{eqnarray}
\gamma_{21}\pi^{c_{1}} + g_{22}\pi^{a_{1}} + g_{23}x\pi^{a_{3}}
&=& g_{11}'\pi^{b_{1}} + \gamma_{21}'\pi^{d_{1}} \label{E:tcd1}\\
\gamma_{32}\pi^{c_{2}} + g_{33}\pi^{a_{2}}
&=& g_{12}'y\pi^{b_{3}} + g_{22}'\pi^{b_{2}} + \gamma_{32}'\pi^{d_{2}} \label{E:tcd2}\\
\gamma_{31}\pi^{c_{3}} + \gamma_{32}\pi^{c_{2}+a_{1}} + g_{33}x\pi^{a_{3}}
&=& g_{11}'y\pi^{b_{3}} + \gamma_{21}'\pi^{d_{1}+b_{2}} + \gamma_{31}'\pi^{d_{3}}. \label{E:tcd3}
\end{eqnarray}

We will assume first that $c_{1}$ and $d_{1}$ are not both zero and that $c_{2}$ and $d_{2}$ are not both zero.  In this case we see that $\val(\gamma'_{21}\pi^{d_{1}+b_{2}})\geq \underline{d}_{1}+b_{2}$, since if $d_{1}=0$ then $c_{1}>0$ forces $\val(\gamma'_{21})>0$ by (\ref{E:tcd1}), and similarly $\val(\gamma_{32}\pi^{c_{2}+a_{1}})\geq \underline{c}_{2}+a_{1}$.

If either of $a_{3}$ or $b_{3}$ is strictly less than $\min\{c_{3},d_{3},a_{1}+\underline{c}_{2},\underline{d}_{1}+a_{2}\}$ then (\ref{E:tcd3}) implies that $a_{3}=b_{3}$.  However, $a_{3}$ and $b_{3}$ cannot be greater than this minimum, since $\aa,\bb\in\TT_{\cc,\dd}$, so the only other possibility is that they are both equal to it.  Further, if either $a_{1}$ or $b_{1}$ is less than $\min\{\underline{c}_{1},\underline{d}_{1},a_{3}\}$ then (\ref{E:tcd1}) gives $a_{1}=b_{1}$, but again the only other option is for them both to be equal to this minimum.  Similarly, (\ref{E:tcd2}) shows that $a_{2}=b_{2}$ and so we have $\aa=\bb$.

Now assume that $c_{1}=d_{1}=0$ and note that this means that we may have $\gamma_{21}'$ of valuation $0$.  In this case our triples $\aa$ and $\bb$ are such that $a_{2}=a_{3}$ and $b_{2}=b_{3}$ with $a_{1}=b_{1}=1$.  If either of $a_{2}$ or $b_{2}$ is less than $\min\{c_{2},d_{2}\}$ then (\ref{E:tcd2}) implies that $a_{2}=b_{2}$.  Indeed, $a_{2}$ and $b_{2}$ cannot be greater than $\min\{c_{2},d_{2}\}$ so we see that $\aa=\bb$.  A similar argument deals with the case when $c_{2}=d_{2}=0$.
\end{proof}

\begin{definition}\label{def:tcd}
For $\aa\in \TT_{\cc,\dd}$ let
\begin{equation*}
\aa(\cc,\dd) = \min{}' \{a_{1},a_{2},a_{3}-a_{1},a_{3}-a_{2},c_{i}-a_{i},d_{i}-a_{i},a_{1}+c_{2}-a_{3},d_{1}+a_{2}-a_{3}\}
\end{equation*}
and
\begin{equation*}
\aa(\cc,\dd)' = \min{}' \{d_{3}-a_{3},c_{3}-a_{3},c_{1}-a_{1},d_{2}-a_{2}\} \geq \aa(\cc,\dd)
\end{equation*}
where $\min{}'$ means that we take $0$ if any of the terms is negative.
\end{definition}

\begin{lemma} \label{lem:rccdd1}
Let $\cc,\dd\in \TT$, then
\begin{equation*}
\RR_{\cc,\dd}^{1}  = \{ t_{\aa,x} : \aa \in \TT_{\cc,\dd}, x\in \XX_{\cc,\dd}^{\aa} \}
\end{equation*}
where
\begin{equation*}\XX_{\cc,\dd}^{\aa}  = \left\{
\begin{array}{ll}
(\R/\P^{\aa(\cc,\dd)})^{\times} & \text{if $a_{1}+a_{2}\neq a_{3}$}; \\
\bigcup_{i=0}^{\aa(\cc,\dd)'} (1+\pi^{i}\R^{\times}) \cap (\R/\P^{\aa(\cc,\dd)+i})^{\times} \cap (\R/\P^{\aa(\cc,\dd)'})^{\times} & \text{if $a_{1}+a_{2}=a_{3}$}.
\end{array}
\right.
\end{equation*}
\end{lemma}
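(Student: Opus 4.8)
The plan is to proceed along the lines of the proof of Proposition~\ref{prop:BBiwahori}, now keeping track of the valuation constraints imposed by $C_{\cc}$ and $C_{\dd}$. By Lemma~\ref{lem:tccdd} every element of $\RR_{\cc,\dd}^{1}$ may already be taken of the form $t_{\aa,x}$ with $\aa\in\TT_{\cc,\dd}$ and $x\in\R^{\times}$, and distinct $\aa$ give distinct double cosets; so it remains only to fix $\aa\in\TT_{\cc,\dd}$ and decide for which $x$ the cosets $C_{\cc}t_{\aa,x}C_{\dd}$ are distinct. Suppose $g=[g_{ij}]\in C_{\cc}$ and $g'=[g'_{ij}]\in C_{\dd}$ satisfy $gt_{\aa,x}=t_{\aa,y}g'$. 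Parametrising the subdiagonal entries as in the proof of Lemma~\ref{lem:tccdd} and comparing the six entries on or above the diagonal solves for the $g'_{ij}$ in terms of the $g_{ij}$ exactly as in Proposition~\ref{prop:BBiwahori}, while the three subdiagonal entries give the refined analogues of (\ref{E:tcd1}\ndash\ref{E:tcd3}). Eliminating the subdiagonal parameters from these relations yields the analogues of (\ref{E:BBg11}\ndash\ref{E:BBg22}); in particular one obtains a compatibility relation of the shape $(x-y)g_{22}=Ex(\pi^{a_{1}+a_{2}}-\pi^{a_{3}}y)$ in which $g_{22}\in\R^{\times}$ and $E$ is a combination of the remaining free parameters whose valuation is bounded below precisely by the quantities occurring in $\aa(\cc,\dd)$ from Definition~\ref{def:tcd}, together with the two further relations forcing $g_{11},g_{33}\in\R^{\times}$.

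For necessity, suppose $x\ne y$ and $t_{\aa,x}$, $t_{\aa,y}$ represent the same double coset. If $a_{1}+a_{2}\ne a_{3}$ then, since $y\in\R^{\times}$, $\val(\pi^{a_{1}+a_{2}}-\pi^{a_{3}}y)=\min\{a_{1}+a_{2},a_{3}\}$, and the compatibility relation forces $\val(x-y)\ge\aa(\cc,\dd)$, so $x$ ranges over at most $(\R/\P^{\aa(\cc,\dd)})^{\times}$. If $a_{1}+a_{2}=a_{3}$ one checks that $t_{\aa,x}=t_{\aa,1}n=nt_{\aa,1}$, where $n$ differs from the identity only in its $(3,1)$-entry, which equals $\pi^{a_{3}}(x-1)$; hence the factor above is $\pi^{a_{3}}(1-y)$, and exactly as in Proposition~\ref{prop:BBiwahori} one is forced to have $\val(1-x)=\val(1-y)=i$ and $\val(x-y)\ge\aa(\cc,\dd)+i$. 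Moreover the factorisation shows that once $a_{3}+i$ reaches $c_{3}$ or $d_{3}$ the matrix $n$ can be absorbed into $C_{\cc}$ or $C_{\dd}$, collapsing $x$ to $1$; a more careful accounting, using also the moves available in the $(2,1)$-slot of $C_{\cc}$ and the $(3,2)$-slot of $C_{\dd}$, refines this to the sharper bound $i\le\aa(\cc,\dd)'$, which is why the index set defining $\aa(\cc,\dd)'$ is the smaller one. Thus $x$ must lie in $\XX_{\cc,\dd}^{\aa}$ as claimed.

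For sufficiency one runs the argument in reverse: given $x,y$ with $\val(x-y)\ge\aa(\cc,\dd)$ (respectively with the relations of the previous paragraph when $a_{1}+a_{2}=a_{3}$), set all but one of the free parameters to zero and choose the remaining one -- the parameter attached to whichever term realises $\aa(\cc,\dd)$ (respectively $\aa(\cc,\dd)'$) -- to have exactly the valuation dictated by the compatibility relation, and then check in the finitely many cases that the resulting $g$ lies in $C_{\cc}$ and $g'$ in $C_{\dd}$, that is, has unit diagonal entries, integral superdiagonal entries, and subdiagonal entries of the prescribed valuations. The exceptional triples in (\ref{E:Tdefexcept}), arising when $c_{1}=d_{1}=0$ or $c_{2}=d_{2}=0$, are treated separately: there $\gamma'_{21}$ (respectively $\gamma'_{32}$) is allowed to be a unit, which is precisely why the corresponding term is absent from $\aa(\cc,\dd)$. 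I expect this sufficiency direction to be the main obstacle -- it is essentially a bookkeeping exercise, but, as with the second half of the proof of Proposition~\ref{prop:BBiwahori}, it requires a delicate case split according to which term of $\aa(\cc,\dd)$ (or $\aa(\cc,\dd)'$) attains the minimum, with the case $a_{1}+a_{2}=a_{3}$ the most involved because one must simultaneously track $i=\val(1-x)$ and confirm the threshold $i\le\aa(\cc,\dd)'$.
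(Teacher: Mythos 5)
Your overall strategy is the paper's: reduce to $\aa\in\TT_{\cc,\dd}$ via Lemma~\ref{lem:tccdd}, write out the three subdiagonal compatibility equations with the extra parameters $\gamma_{21},\gamma_{32},\gamma_{31},\gamma'_{21},\gamma'_{32},\gamma'_{31}$, and read off the admissible values of $\val(x-y)$ and $\val(1-x)$. But the shape you assign to the key relation is wrong in a way that matters. The correct analogue of (\ref{E:BBg22}) is not $(x-y)g_{22}=Ex(\pi^{a_{1}+a_{2}}-\pi^{a_{3}}y)$: only the terms coming from $g_{12},g_{23},g_{13}$ and $\gamma'_{21}$ carry the factor $r_{y}=\pi^{a_{1}+a_{2}}-y\pi^{a_{3}}$ (and $\gamma_{32}$ carries $r_{x}$), while the terms $\gamma_{21}y\pi^{c_{1}-a_{1}}$, $\gamma_{31}\pi^{c_{3}-a_{3}}$, $\gamma'_{32}x\pi^{d_{2}-a_{2}}$ and $\gamma'_{31}\pi^{d_{3}-a_{3}}$ carry no such factor. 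These four $r$-free exponents are exactly the index set of $\aa(\cc,\dd)'$, and they are why, when $a_{1}+a_{2}=a_{3}$, the bound is $\val(x-y)\geq\min\{\aa(\cc,\dd)+i,\aa(\cc,\dd)'\}$ rather than $\aa(\cc,\dd)+i$. Your factorised form would attach a ``$+\val(1-y)$'' to every term and hence produce no absolute cap at all; you then have to reinstate the cap by the separate absorption argument, which accounts directly only for $c_{3}-a_{3}$ and $d_{3}-a_{3}$ and leaves $c_{1}-a_{1}$ and $d_{2}-a_{2}$ to an unspecified ``more careful accounting''.

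The second, related, gap is the claim that $\val(1-x)=\val(1-y)$ is forced ``exactly as in Proposition~\ref{prop:BBiwahori}''. That argument relied on every right-hand term having valuation strictly greater than $\val(1-y)$, which fails here: if $i=\val(1-x)<\val(1-y)=j$ with $i<\aa(\cc,\dd)'$, the term $\gamma_{32}r_{x}\pi^{c_{2}-a_{2}-a_{3}}$ has valuation $\geq c_{2}-a_{2}+i$, which can equal $i=\val(x-y)$ when $c_{2}=a_{2}$, so no immediate contradiction arises. The paper closes this case by a genuinely new step: it shows any such solution forces $\val(g_{22}-\gamma_{32})>0$ and hence, via the analogue of (\ref{E:BBg33}), $\val(g_{33})>0$, contradicting $g_{33}\in\R^{\times}$ (with a symmetric invertibility obstruction, via $d_{1}=a_{1}$, when $j<i$). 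Without this obstruction your necessity argument does not close, and the parameter set would come out wrong precisely in the boundary cases $c_{2}=a_{2}$ or $d_{1}=a_{1}$. The remainder of your plan --- sufficiency by choosing a single free parameter of the exact required valuation, a case split over which term attains the minimum, and separate treatment of the exceptional triples of (\ref{E:Tdefexcept}) --- does match what the paper carries out.
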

\begin{proof}
Let $g_{ij}$, $g'_{ij}$, $\gamma_{ij}$ and $\gamma_{ij}'$ be as in the proof of Lemma~\ref{lem:tccdd}, then $gt_{\aa,x}=t_{\aa,y}g'$ for some $x,y\in \R^{\times}$ precisely when the following three equations can be solved for $g_{11}$, $g_{22}$ and $g_{33}$ in $\R^{\times}$:
\begin{eqnarray}
g_{11} \label{E:af1}
&=& g_{22} - g_{12}\pi^{a_{1}} \!- g_{13}x\pi^{a_{3}} \!+ \gamma_{21}\pi^{c_{1}-a_{1}} \!- \gamma_{21}'\pi^{d_{1}-a_{1}} \!+ g_{23}x\pi^{a_{3}-a_{1}} \\
g_{33} \label{E:af2} &=& g_{22} - g_{12}r_{y}\pi^{-a_{2}} \!- g_{13}r_{y} + g_{23}\pi^{a_{2}}\! - \gamma_{32}\pi^{c_{2}-a_{2}} \!+ \gamma_{32}'\pi^{d_{2}-a_{2}}\\
\notag\!\!\!\!\!\!\! (x-y)g_{22}
&=& \label{E:relation} (g_{12}\pi^{-a_{2}}  \!- g_{23}\pi^{-a_{1}}\!+ g_{13})xr_{y} + \gamma_{21}y\pi^{c_{1}-a_{1}} \!- \gamma_{31}\pi^{c_{3}-a_{3}} \\
& & {} - \gamma_{32}'x\pi^{d_{2}-a_{2}} \!+ \gamma_{31}'\pi^{d_{3}-a_{3}} \!- \gamma_{32}r_{x}\pi^{c_{2}-a_{2}-a_{3}} \!+ \gamma_{21}'r_{y}\pi^{d_{1}-a_{1}-a_{3}}
\end{eqnarray}
where for each $z\in \R^{\times}$ we define $r_{z} = \pi^{a_{1}+a_{2}} - z\pi^{a_{3}}.$

Suppose first that $a_{1}+a_{2}\neq a_{3}$, then (\ref{E:relation}) immediately yields
\begin{equation*}\val(x-y)\geq \aa(\cc,\dd).\end{equation*}
Conversely, given distinct elements  $x,y\in \R^{\times}$ with $\val(x-y)\geq \aa(\cc,\dd)$ then one can solve (\ref{E:relation}) for $g_{22}\in \R^{\times}$ and a careful consideration of (\ref{E:af1}) and (\ref{E:af2}) reveals that one can choose these variables so that $g_{11}$ and $g_{33}$ are invertible as well.  Thus the set
\begin{equation*}\XX_{\cc,\dd}^{\aa}= (\R/\P^{\aa(\cc,\dd)})^{\times}\end{equation*}
exactly parametrises the representatives $t_{\aa,x}$ for $\aa\in \TT_{\cc,\dd}$ with $a_{1}+a_{2}\neq a_{3}$.

Now suppose that $a_{1}+a_{2}=a_{3}$.  Let $\val(1-x)=i$ and $\val(1-y)=j$, then from (\ref{E:relation}) we see that
\begin{equation*}\val(x-y) \geq \min\{a_{1}+j,a_{2}+j,c_{2}-a_{2}+i,d_{1}-a_{1}+j,\aa(\cc,\dd)'\}.\end{equation*}
This clearly holds whenever $i,j\geq \aa(\cc,\dd)'$ since
$\val(x-y) \geq \min\{i,j\}$ so we will assume that at least one of $i$ or $j$ is less than $\aa(\cc,\dd)'$.  If $i<j$ with $i<\aa(\cc,\dd)'$ then $\val(x-y)=i$ and we must have $c_{2}=a_{2}$.  However, when $c_{2}=a_{2}$ we see that (\ref{E:relation}) gives
$\val((x-y)g_{22}+\gamma_{32}(1-x)) > i$
which implies that $\val(g_{22}-\gamma_{32})>0$, since if $\val(g_{22}-\gamma_{32})=0$ then we would have
\begin{equation*}\val((x-y)g_{22}+\gamma_{32}(1-x)) = \val((1-y)g_{22}-(1-x)(g_{22}-\gamma_{32})) = i.\end{equation*}
This in turn means that $\val(g_{33}) = \val(g_{22}-\gamma_{32})>0$ by (\ref{E:af2}) and so $g_{33}$ cannot be invertible.  Similarly, if $j<i$ with $j<\aa(\cc,\dd)'$ then $d_{1}=a_{1}$ and $g_{22}$ is not invertible by (\ref{E:af1}). Consequently, we must either have
\begin{equation} \label{eq:equalcase1}
\val(1 -  x),\ \val(1  -   y)\geq \aa(\cc,\dd)'
\end{equation}
or
\begin{equation} \label{eq:equalcase2}
\begin{array}{c}
\val(1 -  x)=\val(1 -  y)=i<\aa(\cc,\dd)' \text{ and } \\
\val(x -  y) \geq \min\{\aa(\cc,\dd) + i,\aa(\cc,\dd)'\}.
\end{array}
\end{equation}
Conversely, if $x,y\in \R^{\times}$ are distinct elements satisfying (\ref{eq:equalcase1}) or (\ref{eq:equalcase2}) then it is possible to find solutions to (\ref{E:af1}-\ref{E:relation}).
Hence, the set
\begin{equation*}\bigcup_{i=0}^{\aa(\cc,\dd)'} (1+\pi^{i}\R^{\times}) \cap (\R/\P^{\aa(\cc,\dd)+i})^{\times} \cap (\R/\P^{\aa(\cc,\dd)'})^{\times}\end{equation*}
precisely parametrises the representatives $t_{\aa,x}$ for $\aa\in \TT_{\cc,\dd}$ with $a_{1}+a_{2}=a_{3}$.
\end{proof}

Note that if $a_{1}+a_{2}=a_{3}$ then the set $\XX_{\cc,\dd}^{\aa}$ lies between $(\R/\P^{\aa(\cc,\dd)})^{\times}$ and $(\R/\P^{\aa(\cc,\dd)'})^{\times}$.  In particular, when $\aa(\cc,\dd)' = \aa(\cc,\dd)$ then the definition of $\XX_{\cc,\dd}^{\aa}$ given in Lemma \ref{lem:rccdd1} reduces to the much simpler
\begin{equation*}\XX_{\cc,\dd}^{\aa}  = (\R/\P^{\aa(\cc,\dd)})^{\times}.\end{equation*}

Further, in general we can compute directly that
\begin{equation*}
|\XX_{\cc,\dd}^{\aa}| = \left\{ \begin{array}{ll}
1                                                    & \text{if $a_{1}+a_{2}\neq a_{3}$, $\aa(\cc,\dd)=0$};\\
(q-1)q^{\aa(\cc,\dd)-1}                              & \text{if $a_{1}+a_{2}\neq a_{3}$, $\aa(\cc,\dd)>0$};\\
\aa(\cc,\dd)'+1                                      & \text{if $a_{1}+a_{2}= a_{3}$, $\aa(\cc,\dd)=0$};\\
(\aa(\cc,\dd)'-\aa(\cc,\dd)+1)(q-1)q^{\aa(\cc,\dd)-1} & \text{if $a_{1}+a_{2}= a_{3}$, $\aa(\cc,\dd)>0$}.\\
\end{array}\right.
\end{equation*}

\begin{theorem}\label{thm:cosets}
Let $\cc,\dd\in \TT$, then a complete set of $(C_{\cc},C_{\dd})$-double coset representatives in $K$ is
\begin{equation*}\RR_{\cc,\dd} = \bigcup_{w\in W_{\cc,\dd}} \RR_{\cc,\dd}^{w}\end{equation*}
where if $w\in W_{\cc,\dd}$ then we define $\RR_{\cc,\dd}^{w}$ as follows
\begin{enumerate}[{\bf (i)}]
\item $\RR_{\cc,\dd}^{1} = \{ t_{\aa,x} : \aa\in \TT_{\cc,\dd} , x\in \XX^{\aa}_{\cc,\dd}\}$;
\item $\RR_{\cc,\dd}^{s_{1}}  =  \{s_{1}^{(\alpha,\beta)}\!:\!
1\!\leq\! \alpha\! \leq \!\min\{\underline{d}_{2},c_{3}\},
1\!\leq\! \beta\!  \leq\! \min\{\underline{c}_{2},d_{3}\},
-c_{1}\! \leq \!\beta\!-\!\alpha\!\leq \!d_{1}\}$;
\item $\RR_{\cc,\dd}^{s_{2}}  = \{s_{2}^{(\alpha,\beta)} \!:\!
1\!\leq\! \alpha \!\leq \!\min\{\underline{d}_{1},c_{3}\},
1\!\leq \!\beta  \!\leq \!\min\{\underline{c}_{1},d_{3}\},
-c_{2}\! \leq \!\beta\!-\!\alpha \!\leq d_{2}\}$;
\item $\RR_{\cc,\dd}^{s_{1}s_{2}} = \{ s_{1}s_{2}^{(\alpha)} : 1\leq \alpha \leq \min\{d_{1},c_{2}\}\}$;
\item $\RR_{\cc,\dd}^{s_{2}s_{1}} = \{ s_{2}s_{1}^{(\alpha)} : 1\leq \alpha \leq \min\{c_{1},d_{2}\}\}$;
\item $\RR_{\cc,\dd}^{w_{0}}  = \{ w_{0}\}$
\end{enumerate}
and otherwise we take $\RR_{\cc,\dd}^{w} = \emptyset$.
\end{theorem}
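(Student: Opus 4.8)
The plan is to bootstrap from Theorem~\ref{thm:BBcosets}. Since $B\subseteq C_{\cc}\cap C_{\dd}$, every $(C_{\cc},C_{\dd})$-double coset in $K$ is a union of $(B,B)$-double cosets, hence by Theorem~\ref{thm:BBcosets} it contains a representative from the set $\RR$. Passing to $\mathrm{GL}(3,\mathfrak{f})=K/K_{1}$, the images $\overline{C}_{\cc}$ and $\overline{C}_{\dd}$ are the standard parabolic subgroups determined by which of $c_{1},c_{2}$ (resp.\ $d_{1},d_{2}$) vanish, and $W_{\cc,\dd}$ was chosen precisely as a transversal for $\overline{C}_{\cc}\backslash\mathrm{GL}(3,\mathfrak{f})/\overline{C}_{\dd}$; so every double coset contains one of the representatives in $\RR$ attached to an element $w\in W_{\cc,\dd}$, and, as already observed just before the theorem, representatives attached to distinct elements of $W_{\cc,\dd}$ lie in distinct double cosets. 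The task therefore reduces to identifying, for each $w\in W_{\cc,\dd}$, the subset $\RR^{w}_{\cc,\dd}$ of these representatives that is actually needed, and checking that its distinct members represent distinct double cosets.

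For $w=1$ this is exactly Lemma~\ref{lem:rccdd1}, which is part~\textbf{(i)}. For $w=w_{0}$, since $Bw_{0}B$ is already a single $(B,B)$-double coset it is a fortiori a single $(C_{\cc},C_{\dd})$-double coset, so $\RR^{w_{0}}_{\cc,\dd}=\{w_{0}\}$, which is part~\textbf{(vi)}. The real content is parts~\textbf{(ii)}--\textbf{(v)}, and the argument is the same in each; I would carry out $w=s_{1}$ in full and then remark that $s_{2}$ is symmetric, while $s_{1}s_{2}$ and $s_{2}s_{1}$ are easier since their tails carry only a single parameter.

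For $w=s_{1}$ I would, as in the proofs of Theorem~\ref{thm:BBcosets} and Lemma~\ref{lem:rccdd1}, analyse when $C_{\cc}\,s_{1}^{(\alpha,\beta)}\,C_{\dd}=C_{\cc}\,s_{1}^{(\alpha',\beta')}\,C_{\dd}$ by writing $g\,s_{1}^{(\alpha,\beta)}=s_{1}^{(\alpha',\beta')}\,g'$ with $g=[g_{ij}]\in C_{\cc}$, $g'=[g'_{ij}]\in C_{\dd}$, setting $g_{21}=\gamma_{21}\pi^{c_{1}}$, $g_{32}=\gamma_{32}\pi^{c_{2}}$, $g_{31}=\gamma_{31}\pi^{c_{3}}$ and similarly for $g'$, and reading off the equations from the entries below the diagonal. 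The first half is the \emph{reduction}: using the row operations available inside $C_{\cc}$ (adding a $\pi^{c_{i}}$-multiple of one row to another), the column operations available inside $C_{\dd}$ (adding a $\pi^{d_{i}}$-multiple of one column to another), and the ordinary $B$-operations (scalings and upper-triangular row and column moves), one shows that $C_{\cc}\,s_{1}^{(\alpha,\beta)}\,C_{\dd}$ always contains an $s_{1}^{(\alpha',\beta')}$ with $1\le\alpha'\le\min\{\underline{d}_{2},c_{3}\}$, $1\le\beta'\le\min\{\underline{c}_{2},d_{3}\}$ and $-c_{1}\le\beta'-\alpha'\le d_{1}$; the band $-c_{1}\le\beta'-\alpha'\le d_{1}$ is exactly the room afforded by adding a $\pi^{c_{1}}$-multiple of row $1$ to row $2$ and a $\pi^{d_{1}}$-multiple of column $2$ to column $1$. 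The second half is the \emph{separation}: one checks that if both $(\alpha,\beta)$ and $(\alpha',\beta')$ already lie in this region and $g\,s_{1}^{(\alpha,\beta)}=s_{1}^{(\alpha',\beta')}\,g'$ has a solution, then the entry equations — which now carry extra terms $\gamma_{ij}\pi^{c_{i}}$ and $\gamma'_{ij}\pi^{d_{i}}$ beyond those appearing in Theorem~\ref{thm:BBcosets} — still force $\alpha=\alpha'$ and $\beta=\beta'$ once the constraints on the ranges are taken into account. Together these give part~\textbf{(ii)}.

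The main obstacle will be the bookkeeping in the reduction step. There are many boundary cases, according to which of $c_{1},c_{2},c_{3},d_{1},d_{2},d_{3}$ (and the truncations $\underline{c}_{i},\underline{d}_{i}$) realises each of the relevant minima, and according to which of $c_{1},d_{1},c_{2},d_{2}$ vanish — the same degeneracies that already forced the $\min{}'$-conventions and the exceptional families in Definition~\ref{D:Tcd}. One has to verify in particular that, after each row or column operation, the diagonal entry destined to become the relevant $g_{ii}$ or $g'_{ii}$ stays a unit. I would organise the computation by first disposing of the generic case in which all of $c_{1},c_{2},c_{3},d_{1},d_{2},d_{3}$ are positive, and then handling the degenerate configurations one at a time, in parallel with the list defining $W_{\cc,\dd}$; once the cases are separated out, the underlying linear algebra is routine.
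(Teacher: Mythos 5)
Your proposal is correct and follows essentially the same route as the paper: part (i) is delegated to Lemma~\ref{lem:rccdd1}, the Bruhat/parabolic argument via $W_{\cc,\dd}$ separates representatives with distinct Weyl components, and the case $w=s_{1}$ is handled by the intertwining equation $g\,s_{1}^{(\alpha,\beta)}=s_{1}^{(\alpha',\beta')}g'$ with the substitutions $g_{ij}=\gamma_{ij}\pi^{c_{i}}$, first reducing $(\alpha,\beta)$ into the stated range and then checking separation. The paper's proof is exactly this, with the one delicate point in the separation step being that $\alpha'=c_{1}+\beta'$ and $\beta'=d_{1}+\alpha'$ cannot hold simultaneously (it would force $c_{1}=d_{1}=0$), which falls under the boundary bookkeeping you flag.
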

\begin{proof}
We have already shown (i) in Lemma \ref{lem:rccdd1} so we need to consider the representatives corresponding to non-trivial Weyl group elements.  As in Theorem \ref{thm:BBcosets} we will prove the case where $w=s_{1}$ and note that the other cases are similar.

Suppose that $s_{1}^{(\alpha,\beta)}$ and $s_{1}^{(\alpha',\beta')}$ represent the same double coset for pairs $(\alpha,\beta)$ and $(\alpha',\beta')$ with $\alpha,\alpha',\beta,\beta'\geq 1$.  There must therefore be elements $g=[g_{ij}]$ of $C_{\cc}$ and $g'=[g'_{ij}]$ of $C_{\dd}$ such that $gs_{1}^{(\alpha,\beta)} = s_{1}^{(\alpha',\beta')}g'$ and, if we let $\gamma_{ij}$ and $\gamma_{ij}'$ be as in the proof of Lemma~\ref{lem:tccdd}, this occurs precisely when we can find $g_{11},g_{22},g_{33}\in \R^{\times}$ and $g_{13},g_{23},\gamma_{21},\gamma_{21}',\gamma_{32},\gamma_{32}',\gamma_{31},\gamma_{31}'\in \R$ with
\begin{eqnarray}
\label{eq:s1alphaccdd}\pi^{\alpha'}g_{11} &=&
\pi^{\alpha}g_{33} - \pi^{\alpha+\alpha'}g_{13}-\pi^{\alpha+\beta'}g_{23} - \pi^{c_{1}+\beta'}\gamma_{21} - \pi^{d_{2}}\gamma_{32}'+\pi^{c_{3}}\gamma_{31} \\
\label{eq:s1betaccdd}\pi^{\beta'}g_{22} &=&
\pi^{\beta}g_{33} - \pi^{\beta+\beta'}g_{23} - \pi^{d_{1}+\alpha'}\gamma_{21}' + \pi^{c_{2}}\gamma_{32} -\pi^{d_{3}}\gamma_{31}'.
\end{eqnarray}

First note that we must have $\val(\pi^{d_{2}}\gamma_{32}')\geq \underline{d}_{2}$ since if $d_{2}=0$ then (\ref{eq:s1alphaccdd}) forces $\val(\gamma_{32}') \geq 1$ and, similarly, $\val(\pi^{c_{2}}\gamma_{32})\geq \underline{c}_{2}$ by (\ref{eq:s1betaccdd}).  It follows that one can solve (\ref{eq:s1alphaccdd}) and (\ref{eq:s1betaccdd}) whenever $\alpha'\geq \min\{\alpha,c_{1}+\beta',\underline{d}_{2},c_{3}\}$ and $\beta'\geq \min\{\beta,d_{1}+\alpha',\underline{c}_{2},d_{3}\}$.  Thus we may choose a representative with $(\alpha,\beta)$ such that $1\leq \alpha \leq \min\{\underline{d}_{2},c_{3}\}$, $1\leq\beta\leq\min\{\underline{c}_{2},d_{3}\}$ and $-c_{1}\leq \beta-\alpha \leq d_{1}$.

Now suppose that $s_{1}^{(\alpha,\beta)}$ and $s_{1}^{(\alpha',\beta')}$ are representatives for the same double coset where $(\alpha,\beta)$ and $(\alpha',\beta')$ satisfy the restrictions above.  If either of $\alpha$ or $\alpha'$ is less than $\min\{c_{1}+\beta',\underline{d}_{2},c_{3}\}$ then $\alpha=\alpha'$ by (\ref{eq:s1alphaccdd}).  Further, if $\alpha$ and $\alpha'$ are greater than or equal to this minimum we actually have $\alpha\geq \alpha' =c_{1}+\beta'$ with $\alpha=\alpha'$ whenever $\beta=\beta'$.   Similarly, if at least one of $\beta$ or $\beta'$ is less than $\min\{d_{1}+\alpha', \underline{c}_{2},d_{3}\}$ then $\beta=\beta'$ by (\ref{eq:s1betaccdd}) and otherwise $\beta\geq \beta'=d_{1}+\alpha'$ with $\beta=\beta'$ whenever $\alpha=\alpha'$.  However, we cannot have both $\alpha'=c_{1}+\beta'$ and $\beta'=d_{1}+\alpha'$, since this would mean that $c_{1}=d_{1}=0$, so we must have $\alpha=\alpha'$ and $\beta=\beta'$.  Hence distinct pairs $(\alpha,\beta)$ give rise to distinct double cosets.
\end{proof}

\begin{remark}
The list of double coset representatives $\RR_{\cc,\dd}$ given in Theorem \ref{thm:cosets} does not seem to be symmetric in $\cc$ and $\dd$.  There is, however, a natural bijection from $C_{\cc}\backslash K/C_{\dd}$ to $C_{\dd}\backslash K /C_{\cc}$ obtained by sending each element of a double coset to its inverse.  This does indeed induce a bijection from $\RR_{\cc,\dd}$ to $\RR_{\dd,\cc}$ since we see that
\begin{equation*}\begin{array}{rcl}
(C_{\cc} t_{\aa,x} C_{\dd})^{-1} &=& C_{\dd} t_{\bb,y} C_{\cc} \\
(C_{\cc} s_{1}^{(\alpha,\beta)} C_{\dd})^{-1} &=& C_{\dd} s_{1}^{(\beta,\alpha)} C_{\cc} \\
(C_{\cc} s_{2}^{(\alpha,\beta)} C_{\dd})^{-1} &=& C_{\dd} s_{2}^{(\beta,\alpha)} C_{\cc} \\
(C_{\cc} s_{1}s_{2}^{(\alpha)} C_{\dd})^{-1} &=& C_{\dd} s_{2}s_{1}^{(\alpha)} C_{\cc} \\
(C_{\cc} s_{2}s_{1}^{(\alpha)} C_{\dd})^{-1} &=& C_{\dd} s_{1}s_{2}^{(\alpha)} C_{\cc} \\
(C_{\cc}w_{0} C_{\dd})^{-1} &=& C_{\dd} w_{0} C_{\cc}
\end{array}\end{equation*}
where
\begin{equation*}(\bb,y) = \left\{ \begin{array}{ll}
(\aa,x-\pi^{a_{3}-a_{1}-a_{2}})                         & \text{if $a_{1}+a_{2}<a_{3}$}; \\
((a_{1},a_{2},\val(r_{x})), r_{x}\pi^{-\val(r_{x})})     & \text{if $a_{1}+a_{2}=a_{3}$}; \\
((a_{1},a_{2},a_{1}+a_{2}), -1+x\pi^{a_{1}+a_{2}-a_{3}})  & \text{if $a_{1}+a_{2}>a_{3}$}.
\end{array}\right.\end{equation*}
In particular, Theorem \ref{thm:cosets} is symmetric in $\cc$ and $\dd$ with respect to this bijection.
\end{remark}

We want to use the description of the double coset structure given Theorem~\ref{thm:cosets} to investigate the components $V_{\cc}$.  From Corollary \ref{cor:altsum} we know that
\begin{equation*}
\I(V_{\cc},V_{\dd}) = \sum_{I\subseteq S_{\cc},\ J\subseteq S_{\dd}} (-1)^{|I|+|J|} |\RR_{\cc_{I},\dd_{J}}|
\end{equation*}
and for each $w\in W$ we will consider
\begin{equation*}
\I(V_{\cc},V_{\dd})^{w} = \sum_{I\subseteq S_{\cc},\ J\subseteq S_{\dd}} (-1)^{|I|+|J|} |\RR^{w}_{\cc_{I},\dd_{J}}|
\end{equation*}
since then $\I(V_{\cc},V_{\dd}) = \sum_{w\in W}\I(V_{\cc},V_{\dd})^{w}.$


\section{One descendant} \label{sec:one}

\begin{figure}[t]\fontsize{9}{9}\selectfont{
$$\begin{array}{ccccc}
{\xymatrix@C=1pt{
 \disp{\cc}{(0, c_{2}, c_{2})} \ar[dr] &             \\
                & \disp{\cc_{\{3\}}}{(0, c_{2}-1, c_{2}-1)} }}
& \quad&
{\xymatrix@C=1pt{
 \disp{\cc}{(c_{1},c_{2},c_{1}+c_{2})} \ar[d]  \\
\disp{\cc_{\{3\}}}{(c_{1},c_{2},c_{1}+c_{2}-1)}}}
& \quad&
{\xymatrix@C=1pt{
            &  \disp{\cc}{(c_{1},0,c_{1})} \ar[dl] \\
\disp{\cc_{\{3\}}}{(c_{1}-1,0,c_{1}-1)}                  }}
\end{array}$$}
\begin{caption} {The one descendant cases\label{fig:onedesc}} \end{caption}
\end{figure}

Rather than consider all possible $\cc$, we split the problem into three separate cases depending on the number of triples immediately below $\cc$ in the poset $\TT$.  Recall that the space of $K_{1}$-fixed vectors in $V$ decomposes as
\begin{equation*}V^{K_{1}} = V_{(0,0,0)} \oplus V_{(0,1,1)} \oplus V_{(1,0,1)} \oplus V_{(1,1,1)}\end{equation*}
where $(0,1,1)$ and $(1,0,1)$ are the triples having exactly one descendant in $\TT$.  The corresponding components $V_{(0,1,1)}$ and $V_{(1,0,1)}$ are equivalent irreducibles and we find that single descendant triples (see Figure~\ref{fig:onedesc}) will always give irreducible components which are equivalent to all other single descendant components lying in the same level.

\begin{theorem} \label{thm:onedesc}
Let $\cc=(c_{1},c_{2},c_{1}+c_{2})$ with $c_{1}+c_{2}>1$, then $V_{\cc}$ is irreducible and
\begin{equation*}\dim V_{\cc} = q^{2c_{1}+2c_{2}-4}(q-1)(q+1)(q^2+q+1).\end{equation*}
Moreover, $V_{\cc}\simeq V_{\dd}$ for any $\dd=(d_{1},d_{2},d_{1}+d_{2})$ with $c_{1}+c_{2}=d_{1}+d_{2}$.
\end{theorem}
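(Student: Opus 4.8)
The plan is to work entirely within the Grothendieck group and use the intertwining-number machinery of Corollary~\ref{cor:altsum} together with the explicit double-coset count from Theorem~\ref{thm:cosets}. Since $\cc=(c_1,c_2,c_1+c_2)$ has $S_\cc=\{1,2,3\}$ (assuming $c_1,c_2>0$; the boundary cases $c_1=0$ or $c_2=0$, where $\cc=(0,c_2,c_2)$ or $(c_1,0,c_1)$, have $S_\cc=\{3\}$ and must be handled separately but more easily), Proposition~\ref{prop:altsum} expresses $[V_\cc]$ as the alternating sum over the eight subsets $I\subseteq\{1,2,3\}$ of $[U_{\cc_I}]$. To prove irreducibility it suffices to show $\I(V_\cc,V_\cc)=1$: a finite-dimensional representation $V$ of the compact group $K$ with $\I(V,V)=\dim\operatorname{Hom}_K(V,V)=1$ is necessarily irreducible. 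Thus the first main step is to compute
\begin{equation*}
\I(V_\cc,V_\cc)=\sum_{I,J\subseteq S_\cc}(-1)^{|I|+|J|}\,|\RR_{\cc_I,\cc_J}|=\sum_{w\in W}\I(V_\cc,V_\cc)^w
\end{equation*}
and show it equals $1$, the contribution coming (I expect) entirely from $w=w_0$, since $|\RR^{w_0}_{\cc_I,\cc_J}|=1$ for all $I,J$ and the alternating sum of $1$ over all $I,J$ is $(1-1)^{|S_\cc|}\cdot(1-1)^{|S_\cc|}=0$ unless $S_\cc$ is forced smaller for the relevant $\cc_I$ — so in fact the $w_0$ contribution needs care too. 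More likely each $\I(V_\cc,V_\cc)^w$ for $w\neq 1$ vanishes and the whole intertwining number is carried by $\I(V_\cc,V_\cc)^1$, which by the formulas for $|\XX^{\aa}_{\cc_I,\cc_J}|$ and the combinatorics of $\TT_{\cc_I,\cc_J}$ telescopes to $1$.

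The second main step is the equivalence $V_\cc\simeq V_\dd$ whenever $c_1+c_2=d_1+d_2$. Here I would again compute $\I(V_\cc,V_\dd)$ via Corollary~\ref{cor:altsum}. Having already established that both $V_\cc$ and $V_\dd$ are irreducible with $\I(V_\cc,V_\cc)=\I(V_\dd,V_\dd)=1$, it is enough to show $\I(V_\cc,V_\dd)\geq 1$ (equivalently $=1$), since a nonzero $K$-homomorphism between two irreducibles is an isomorphism. So the task reduces to showing the alternating double sum $\sum_{I,J}(-1)^{|I|+|J|}|\RR_{\cc_I,\dd_J}|$ is nonzero — and by parity/telescoping considerations I expect it to come out exactly $1$, again with the $w=w_0$ term (or the $w=1$ term) providing the surviving contribution because $|\RR^{w_0}_{\cc_I,\dd_J}|=1$ uniformly while the other $\RR^w$ pieces depend on $\cc,\dd$ only through quantities that are constant along the diagonal $c_1+c_2=d_1+d_2$ in a way that makes their alternating sums cancel. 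Since any two triples with equal coordinate sum are connected by a chain of moves $(\dots,c_1,c_2,c_1+c_2)\leftrightarrow(\dots,c_1\pm1,c_2\mp1,c_1+c_2)$, it would also suffice to prove $V_{(c_1,c_2,c_1+c_2)}\simeq V_{(c_1-1,c_2+1,c_1+c_2)}$ for all valid $c_1,c_2$, reducing the combinatorics to a single elementary step; I would use whichever of the direct computation or the chain argument keeps the bookkeeping shortest.

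The dimension formula $\dim V_\cc=q^{2c_1+2c_2-4}(q-1)(q+1)(q^2+q+1)$ is the $k=0$ case (with $c_3=c_1+c_2$, so $k=0$) of the dimension table already displayed after Corollary~\ref{cor:altsum}, with $|\cc|=2(c_1+c_2)$; so once irreducibility is known, the dimension statement is immediate from that table (or, if one prefers a self-contained derivation, from applying Proposition~\ref{prop:altsum} and the formula $\dim U_{\cc}=[K:C_{\cc}]$ to the eight terms, which telescopes to the claimed product). The main obstacle I anticipate is the bookkeeping in the first step: correctly identifying, for each of the eight subsets $I\subseteq\{1,2,3\}$, what $\cc_I$ is (using $\cc_{\{1,2\}}=(c_1-1,c_2-1,c_1+c_2-1)$, etc., and being careful that $\cc_I$ may fall onto a boundary of $\TT$ where $S$ shrinks), then computing $a(\cc_I,\cc_J)$, $a(\cc_I,\cc_J)'$, the index set $\TT_{\cc_I,\cc_J}$, and summing $|\XX^{\aa}_{\cc_I,\cc_J}|$ over it for the $w=1$ part, plus the easier ranges in (ii)--(vi) for the other Weyl elements — and then verifying the resulting $64$-term alternating sum collapses to exactly $1$. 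The cases $c_1=0$ or $c_2=0$ (the outer branches of Figure~\ref{fig:onedesc}) should be checked separately since there $S_\cc=\{3\}$ and $[V_\cc]=[U_\cc]-[U_{\cc_{\{3\}}}]$, making that computation a two-by-two rather than eight-by-eight alternating sum.
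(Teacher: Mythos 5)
There is a genuine error at the very start of your argument: for $\cc=(c_{1},c_{2},c_{1}+c_{2})$ with $c_{1},c_{2}>0$ the descendant set is $S_{\cc}=\{3\}$, not $\{1,2,3\}$. Indeed $\cc_{\{1\}}=(c_{1}-1,c_{2},c_{1}+c_{2})$ violates the condition $c_{3}\leq c_{1}+c_{2}$ defining $\TT$, and likewise for $\cc_{\{2\}}$; only $\cc_{\{3\}}=(c_{1},c_{2},c_{1}+c_{2}-1)$ survives. This is precisely why the theorem sits in the section titled ``One descendant'' and why Figure~\ref{fig:onedesc} shows a single arrow out of $(c_{1},c_{2},c_{1}+c_{2})$: these triples sit at the top of their chain, and the eight-subset expansion of Proposition~\ref{prop:altsum} you propose does not apply. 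You have the situation backwards --- the boundary cases $c_{1}=0$ or $c_{2}=0$ are not the exceptional ones; \emph{every} $\cc$ of this shape has $[V_{\cc}]=[U_{\cc}]-[U_{\cc_{\{3\}}}]$, so the relevant computation is the $2\times 2$ alternating sum $\I(V_{\cc},V_{\dd})=|\RR_{\cc,\dd}|-|\RR_{\cc',\dd}|-|\RR_{\cc,\dd'}|+|\RR_{\cc',\dd'}|$ with $\cc'=\cc_{\{3\}}$, $\dd'=\dd_{\{3\}}$, not a $64$-term sum. Running your proposed computation with $I,J$ ranging over subsets of $\{1,2,3\}$ would compute the intertwining number of the wrong virtual representation.

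Your high-level strategy is otherwise the paper's: establish $\I(V_{\cc},V_{\cc})=1$ for irreducibility, $\I(V_{\cc},V_{\dd})=1$ (hence $\geq 1$ between irreducibles) for the equivalence, and read the dimension off the $k=0$ row of the table following Corollary~\ref{cor:altsum}. For the record, in the correct $2\times 2$ sum the contributions from all $w\neq 1$ cancel in pairs (your second, hedged guess, not the $w_{0}$ mechanism you float first, which would give $0$ anyway since $|\RR^{w_{0}}_{\cdot,\cdot}|=1$ uniformly), and the surviving $+1$ comes from the single extra triple $(\underline{c}_{1},\underline{d}_{2},c_{1}+c_{2})\in\TT_{\cc,\dd}$ with $\aa(\cc,\dd)=0$ that is absent from $\TT_{\cc',\dd}$, $\TT_{\cc,\dd'}$ and $\TT_{\cc',\dd'}$. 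Your reduction of the equivalence to adjacent moves $(c_{1},c_{2})\leftrightarrow(c_{1}-1,c_{2}+1)$ is a legitimate simplification, but the paper handles general $\cc,\dd$ in one stroke since the $2\times 2$ bookkeeping is already light.
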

\begin{proof}
For ease of notation we will assume that $c_{1}\leq d_{1}$.  If we define
$\cc'=\cc_{\{3\}}$
and $\dd'=\dd_{\{3\}}$ then we want to calculate the alternating sum
\begin{equation*}\I(V_{\cc},V_{\dd}) = |\RR_{\cc,\dd}| - |\RR_{\cc',\dd}| - |\RR_{\cc,\dd'}| + |\RR_{\cc',\dd'}|.\end{equation*}

First note that if $c_{1}$, $d_{1}$, $c_{2}$ and $d_{2}$ are all non-zero then $\RR_{\cc,\dd}^{w}$,  $\RR_{\cc',\dd}^{w}$, $\RR_{\cc,\dd'}^{w}$ and $\RR_{\cc',\dd'}^{w}$ are all equal for any non-trivial Weyl group element $w\in W$ since we are only decreasing $c_{3}$ or $d_{3}$ in Theorem~\ref{thm:cosets} and these are both greater than $\max\{c_{1},d_{1},c_{2},d_{2}\}$.  In the case where one or more of $c_{1}$, $d_{1}$, $c_{2}$ or $d_{2}$ is zero the sets are equal in pairs since $c_{3}=d_{3}>1$.  Consequently, we need only consider
\begin{equation*}
\I(V_{\cc},V_{\dd})^{1}
= |\RR_{\cc,\dd}^{1}| - |\RR_{\cc',\dd}^{1}| - |\RR_{\cc,\dd'}^{1}| + |\RR_{\cc',\dd'}^{1}|.
\end{equation*}

Now, from Definition \ref{def:tcd} we see that $\TT_{\cc',\dd}$, $\TT_{\cc,\dd'}$ and $\TT_{\cc',\dd'}$ are equal while $\TT_{\cc,\dd}$ contains the additional triple $\aa=(\underline{c}_{1},\underline{d}_{2},c_{1}+c_{2})$ with
$\aa(\cc,\dd)=0$.  If one of $c_{1}$, $d_{1}$, $c_{2}$ or $d_{2}$ is zero then
$\aa(\cc,\dd)$, $\aa(\cc',\dd)$, $\aa(\cc,\dd')$ and $\aa(\cc',\dd')$ are all zero for every $\aa\in \TT_{\cc',\dd'}$.  On the other hand, if $c_{1}$, $d_{1}$, $c_{2}$ and $d_{2}$ are all non-zero then the only way that a triple $\aa \in \TT_{\cc',\dd'}$  could have $\aa(\cc,\dd)$ strictly greater than any of $\aa(\cc',\dd)$, $\aa(\cc,\dd')$ or $\aa(\cc',\dd')$ is if the minimum occurs for $c_{3}-a_{3}$.  However, $c_{3}-a_{3}=c_{1}+c_{2}-a_{3} \geq a_{1}+c_{2}-a_{3}$ so we would need $a_{1}=c_{1}$ and the minimum would therefore have been $0$.  Thus again $\aa(\cc,\dd)$, $\aa(\cc',\dd)$, $\aa(\cc,\dd')$ and  $\aa(\cc',\dd')$ must be equal for every $\aa\in \TT_{\cc',\dd'}$.  This implies that $\RR_{\cc',\dd}^{1}$, $\RR_{\cc,\dd'}^{1}$ and $\RR_{\cc',\dd'}^{1}$ are equal while $\RR_{\cc,\dd}^{1}$ has the extra representative $t_{(\underline{c}_{1},\underline{d}_{2},c_{1}+c_{2}),1}$.  Hence
\begin{equation*}
\I(V_{\cc},V_{\dd}) =
(|\RR_{\cc',\dd'}^{1}| +1) - |\RR_{\cc',\dd'}^{1}| - |\RR_{\cc',\dd'}^{1}| + |\RR_{\cc',\dd'}^{1}| = 1.
\end{equation*}
Taking $\cc=\dd$ this shows $V_{\cc}$ is irreducible and in general it implies that $V_{\cc}$ and $V_{\dd}$ must be equivalent.
\end{proof}

 In fact, the equivalences in Theorem \ref{thm:onedesc} are the only ones that can involve a component $V_{\cc}$ with a triple of the form $\cc=(c_{1},c_{2},c_{1}+c_{2})$.

\begin{proposition} \label{prop:onemult}
Let $\cc=(c_{1},c_{2},c_{1}+c_{2})$ with $c_{1}+c_{2}>1$, then the multiplicity of $V_{\cc}$ in $\Res_{K}^{\mathbb{G}(F)}V$ is $c_{1}+c_{2}+1$.
\end{proposition}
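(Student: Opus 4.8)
The plan is to compute this multiplicity by Frobenius reciprocity, reducing it to a difference of two double-coset counts that can be read off from Theorem~\ref{thm:cosets}.

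Since $V_{\cc}$ is irreducible (Theorem~\ref{thm:onedesc}) and $\Res_{K}^{\mathbb{G}(F)}V\simeq\Ind_{B}^{K}1$ is semisimple, the multiplicity of $V_{\cc}$ in $\Res_{K}^{\mathbb{G}(F)}V$ equals $\dim\mathrm{Hom}_{K}(\Ind_{B}^{K}1,V_{\cc})$, which by Frobenius reciprocity is $\dim V_{\cc}^{B}$. Now any triple $\cc=(c_{1},c_{2},c_{1}+c_{2})$ has $S_{\cc}=\{3\}$, since lowering $c_{1}$ or $c_{2}$ would violate $c_{3}\le c_{1}+c_{2}$, so Proposition~\ref{prop:altsum} gives $[V_{\cc}]=[U_{\cc}]-[U_{\cc_{\{3\}}}]$. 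Taking $B$-invariants is exact on smooth representations of the profinite group $B$, and $\dim U_{\dd}^{B}=|C_{\dd}\backslash K/B|$ for $\dd\in\TT$ (just as in the derivation of Corollary~\ref{cor:altsum}, from $U_{\dd}=\Ind_{C_{\dd}}^{K}1$ and Frobenius reciprocity), so the multiplicity equals $|C_{\cc}\backslash K/B|-|C_{\cc_{\{3\}}}\backslash K/B|$.

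To evaluate these counts I would pass to the congruence subgroups. Since $B=\bigcap_{n}C_{(n,n,n)}$ and, by Theorem~\ref{thm:cosets}, the integers $|C_{\cc}\backslash K/C_{(n,n,n)}|=|\RR_{\cc,(n,n,n)}|$ form a non-decreasing bounded sequence in $n$, they stabilize to $|C_{\cc}\backslash K/B|$, and likewise for $\cc_{\{3\}}$. One then applies Theorem~\ref{thm:cosets} with $\dd=(n,n,n)$ for $n$ large, to $\cc$ and to $\cc_{\{3\}}$, and subtracts. Because $\cc$ and $\cc_{\{3\}}$ differ only in the third coordinate, the pieces $\RR^{w}$ agree for $w\in\{1,s_{1}s_{2},s_{2}s_{1},w_{0}\}$; the pieces $\RR^{s_{1}}$ (present precisely when $c_{1}>0$) and $\RR^{s_{2}}$ (present precisely when $c_{2}>0$) each lose exactly one representative, the one whose first exponent is forced by the bound $\alpha\le c_{3}$ to equal $c_{3}$; and a count via Definition~\ref{def:tcd} shows that, when $c_{1},c_{2}>0$, the piece $\RR^{1}$ loses exactly $c_{1}+c_{2}-1$ representatives — those coming from triples in $\TT_{\cc,(n,n,n)}\setminus\TT_{\cc_{\{3\}},(n,n,n)}$ together with those triples $\aa$ for which $\aa(\cc_{\{3\}},(n,n,n))'=\aa(\cc,(n,n,n))'-1$, so that $\XX_{\cc_{\{3\}},(n,n,n)}^{\aa}$ has one element fewer than $\XX_{\cc,(n,n,n)}^{\aa}$. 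This gives a total of $(c_{1}+c_{2}-1)+1+1=c_{1}+c_{2}+1$. When $c_{1}=0$ or $c_{2}=0$, exactly one of $\RR^{s_{1}},\RR^{s_{2}}$ is present but $\RR^{1}$ then loses $c_{1}+c_{2}$ representatives instead, and the total is again $c_{1}+c_{2}+1$. (Alternatively, the same bookkeeping can be run directly through Corollary~\ref{cor:altsum}, checking that among the relevant $\dd$ only the $c_{1}+c_{2}+1$ triples $(d_{1},d_{2},d_{1}+d_{2})$ with $d_{1}+d_{2}=c_{1}+c_{2}$ contribute, each with $\I(V_{\cc},V_{\dd})=1$ by Theorem~\ref{thm:onedesc}.)

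The main obstacle is this last count, especially tracking $\XX_{\cc,\dd}^{\aa}$ through the boundary case $a_{1}+a_{2}=a_{3}$, where it depends on both $\aa(\cc,\dd)$ and $\aa(\cc,\dd)'$: it is precisely the behaviour of $\aa(\cc,\dd)'$ under lowering $c_{3}$ that makes the multiplicity independent of $q$, even though the individual counts $|C_{\cc}\backslash K/B|$ are not. The degenerate triples with $c_{1}=0$ or $c_{2}=0$, for which $W_{\cc,\dd}$ is smaller, require a parallel but slightly different tally.
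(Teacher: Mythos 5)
Your argument is correct, and it rests on the same engine as the paper's proof --- reduce the multiplicity to an alternating sum of double-coset counts and evaluate it via Theorem~\ref{thm:cosets} --- but the implementation differs in two ways, both of which the paper exploits to make the computation much shorter. First, the paper begins by invoking Theorem~\ref{thm:onedesc} to replace $\cc$ by $(0,c,c)$ with $c=c_{1}+c_{2}$; in that degenerate case every $\aa\in\TT_{\cc,\dd}$ has $a_{1}=1>c_{1}$, so $\aa(\cc,\dd)$ and $\aa(\cc,\dd)'$ vanish, all the sets $\XX^{\aa}_{\cc,\dd}$ are singletons, and the whole count collapses to $|\TT_{\cc,\dd}\setminus\TT_{\cc',\dd}|$ plus the single lost representative $s_{2}^{(c,1)}$. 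This sidesteps exactly the bookkeeping you identify as the main obstacle, namely tracking $\aa(\cc,\dd)'$ through the boundary case $a_{1}+a_{2}=a_{3}$; your tally for general $\cc$ (the $c_{2}$ triples lost from $\TT_{\cc,\dd}$ plus the $c_{1}-1$ triples $(a_{1},c_{2},a_{1}+c_{2})$ whose $\XX$-set shrinks by one) is correct, but the paper never has to perform it. Second, instead of computing $\dim V_{\cc}^{B}$ as a stabilizing limit of $|C_{\cc}\backslash K/C_{(n,n,n)}|$, the paper observes that $K_{c}\subseteq C_{(0,c,c)}$, so every copy of $V_{\cc}$ in $\Res_{K}^{\mathbb{G}(F)}V$ lies in $V^{K_{c}}=U_{(c,c,c)}$, and it suffices to compute the single finite quantity $\I(V_{\cc},U_{(c,c,c)})$; your limiting argument is valid but adds a layer the paper avoids. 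One small internal slip: you first assert that the pieces $\RR^{w}$ agree for $w\in\{1,s_{1}s_{2},s_{2}s_{1},w_{0}\}$ and then, correctly, that $\RR^{1}$ loses $c_{1}+c_{2}-1$ representatives; the first list should read $\{s_{1}s_{2},s_{2}s_{1},w_{0}\}$.
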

\begin{proof}
By Theorem~\ref{thm:onedesc} we may take $\cc = (0,c,c)$ where $c=c_{1}+c_{2}$.  Further, since $K_{c}$ is the largest principal congruence subgroup contained in $C_{(0,c,c)}$, we know that every subrepresentation of $\Res_{K}^{\mathbb{G}(F)}V$ equivalent to $V_{\cc}$ must be a subrepresentation of $V^{K_{c}} \simeq U_{(c,c,c)}$.  In particular, this means that the multiplicity of $V_{\cc}$ in $\Res_{K}^{\mathbb{G}(F)}V$ is equal to its multiplicity in $U_{(c,c,c)}$.  Thus, setting $\cc'=\cc_{\{3\}} = (0,c-1,c-1)$ and $\dd=(c,c,c)$ we would like to calculate
\begin{equation*}\I(V_{\cc},U_{\dd}) = |\RR_{\cc,\dd}| - |\RR_{\cc',\dd}|.\end{equation*}
Now, $\RR_{\cc,\dd}^{w_{0}} = \RR_{\cc',\dd}^{w_{0}} = \{ w_{0} \}$ and $\RR_{\cc,\dd}^{w} = \RR_{\cc',\dd}^{w} = \emptyset$ for $w=s_{1}$, $s_{1}s_{2}$ or $s_{2}s_{1}$.  However, for $w=s_{2}$ we obtain $\RR_{\cc,\dd}^{s_{2}} = \RR_{\cc',\dd}^{s_{2}} \cup \{ s_{2}^{(c,1)} \}$.   Further, for every $\aa\in \TT_{\cc',\dd}$ we have
$\aa(\cc,\dd) = \aa(\cc',\dd) = 0$ and
$\TT_{\cc,\dd} = \TT_{\cc',\dd} \cup \{ (1,a,c) : 1\leq a \leq c \}$.  Hence
$\RR_{\cc,\dd} = \RR_{\cc',\dd} \cup \{ t_{(1,a,c),1} : 1\leq a \leq c\} \cup \{s_{2}^{(c,1)} \}$
and
\begin{equation*}\I(V_{\cc},U_{\dd}) = (|\RR_{\cc',\dd}| + (c+1) ) - |\RR_{\cc',\dd}| = c+1\end{equation*}
as required.
\end{proof}

\section{Two descendants} \label{sec:two}

\begin{figure}[t]\fontsize{9}{9}\selectfont{
$${\xymatrix@C=1pt{
                          &\disp{\cc}{(c_{1},c_{2},\max\{c_{1},c_{2}\})} \ar[dr]\ar[dl] &                           \\
\disp{\cc_{\{1\}}}{(c_{1}-1,c_{2},\max\{c_{1},c_{2}\})} \ar[dr] &                                & \disp{\cc_{\{2\}}}{(c_{1},c_{2}-1,\max\{c_{1},c_{2}\})} \ar[dl] \\
                          &\disp{\cc_{\{1,2\}}}{(c_{1}-1,c_{2}-1,\max\{c_{1},c_{2}\})}           &                            }}$$}
\begin{caption} {The general two descendant case with $c_{1},c_{2}>1$ \label{fig:twogen}}\end{caption}
\end{figure}

\begin{figure}[t]\fontsize{9}{9}\selectfont{
$$\begin{array}{ccc}\xymatrix@C=1pt{
                 &\disp{\cc}{(1,c_{2},c_{2})} \ar[dl]\ar[dr] &                     \\
\disp{\cc_{\{1\}}}{(0,c_{2},c_{2})} \ar[ddr] &       & \disp{\cc_{\{2\}}}{(1,c_{2}-1,c_{2})}  \ar@{-->}[d] \\
&& \makebox[0.75\width]{$(1,c_{2}-1,c_{2}-1)$} \ar@{-->}[dl] \\
                 &\disp{\cc_{\{1,2\}}}{(0,c_{2}-1,c_{2}-1)}         &                      }
& \quad&
\xymatrix@C=1pt{
                 &\disp{\cc}{(c_{1},1,c_{1})}\ar[dl]\ar[dr]&                  \\
\disp{\cc_{\{1\}}}{(c_{1}-1,1,c_{1})}\ar@{-->}[d] &       & \disp{\cc_{\{2\}}}{(c_{1},0,c_{1})} \ar[ddl] \\
\makebox[0.75\width]{$(c_{1}-1,1,c_{1}-1)$} \ar@{-->}[dr]\\
                               & \disp{\cc_{\{1,2\}}}{(c_{1}-1,0,c_{1}-1)}            &                      }
\end{array}$$}
\begin{caption} {The extremal two descendant cases\label{fig:twoother}}\end{caption}
\end{figure}

In the decomposition of $V^{K_{1}}$ the only component corresponding to a triple with exactly two descendants in $\TT$ is $V_{(1,1,1)}$.  This is the pull-back to $K$ of the Steinberg representation of $\mathrm{GL}(3,\mathfrak{f})$ so is irreducible and appears with multiplicity $1$.  Indeed, any triple $\cc\in \TT$ with two descendants (see Figure~\ref{fig:twogen}) will give an irreducible component $V_{\cc}$ which has multiplicity $1$ in the restriction of $V$ to $K$.  Here we note that if $c_{1}=1$ then $\cc_{\{2\}}=(1,c_{2}-1,c_{2})$ but $\cc_{\{1,2\}} = (0,c_{2}-1,c_{2}-1)$ so for the purposes of calculating $\I(V_{\cc},V_{\cc})$ we ignore the triple in $\TT$ that lies between them (see Figure~\ref{fig:twoother}).  Similarly, we ignore the triple between $\cc_{\{1\}}$ and $\cc_{\{1,2\}}$ when $c_{2}=1$.

\begin{theorem}
Let $\cc=(c_{1},c_{2},\max\{c_{1},c_{2}\})$ where $c_{1},c_{2}\geq 1$ and $c_{1}+c_{2}>1$, then $V_{\cc}$ is irreducible of dimension
\begin{equation*}
\dim V_{\cc} = q^{c_{1}+c_{2}+\max\{c_{1},c_{2}\}-5}(q-1)^{2}(q+1)(q^2+q+1).
\end{equation*}
Moreover, the multiplicity of $V_{\cc}$ in $\Res_{K}^{\mathbb{G}(F)}V$ is $1$.
\end{theorem}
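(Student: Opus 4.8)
The plan is to read the dimension off Section~\ref{sec:defn} and to obtain both irreducibility and multiplicity one by evaluating the intertwining numbers of Corollary~\ref{cor:altsum} via the double-coset description of Theorem~\ref{thm:cosets}.

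First, $\cc=(1,1,1)$ is the pull-back of the Steinberg representation of $\mathrm{GL}(3,\f)$ and was dealt with in Section~\ref{sec:defn}, so I assume $\max\{c_1,c_2\}\geq 2$. The map $g\mapsto w_0(g^{-1})^{T}w_0$ is an involutive automorphism of $K$ which sends $C_{\cc}$ to $C_{\cc^{T}}$, fixes $B$, and interchanges $s_1$ and $s_2$; hence $V_{\cc}\simeq V_{\cc^{T}}$ as $K$-representations, with the same multiplicity in $\Res_K^{\mathbb{G}(F)}V$, and I may assume $c_1\geq c_2$, so that $c_3=c_1$ and $k:=c_1+c_2-c_3=c_2=\min\{c_1,c_2\}$. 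The dimension formula is then precisely the $k=\min\{c_1,c_2\}$ case of the computation of $\dim V_{\cc}$ from Proposition~\ref{prop:altsum} in Section~\ref{sec:defn}. Here $S_{\cc}=\{1,2\}$, with $\cc_{\{1\}}=(c_1-1,c_2,c_1)$, $\cc_{\{2\}}=(c_1,c_2-1,c_1)$ and $\cc_{\{1,2\}}=(c_1-1,c_2-1,c_1)$ when $c_2\geq 2$, while $\cc_{\{1,2\}}=(c_1-1,0,c_1-1)$ in the extremal case $c_2=1$ of Figure~\ref{fig:twoother}.

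For irreducibility I must show $\I(V_{\cc},V_{\cc})=1$. By Corollary~\ref{cor:altsum} this equals $\sum_{w\in W}\I(V_{\cc},V_{\cc})^{w}$, where $\I(V_{\cc},V_{\cc})^{w}=\sum_{I,J\subseteq\{1,2\}}(-1)^{|I|+|J|}\,|\RR^{w}_{\cc_I,\cc_J}|$. All four triples $\cc_{\emptyset},\cc_{\{1\}},\cc_{\{2\}},\cc_{\{1,2\}}$ have third coordinate $c_1$; moreover $\cc_{\emptyset}$ and $\cc_{\{1\}}$ share their second coordinate and $\cc_{\{2\}}$ and $\cc_{\{1,2\}}$ share theirs (symmetrically for the first coordinate), so $\sum_{I}(-1)^{|I|}g(\cc_I)=0$ for every $g$ depending on $\cc_I$ only through its first coordinate, or only through its second. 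Theorem~\ref{thm:cosets} gives $|\RR^{w_0}_{\cc_I,\cc_J}|=1$ and $|\RR^{s_1s_2}_{\cc_I,\cc_J}|=\min\{(\cc_J)_1,(\cc_I)_2\}$, $|\RR^{s_2s_1}_{\cc_I,\cc_J}|=\min\{(\cc_I)_1,(\cc_J)_2\}$, each of which depends on $\cc_I$ through at most one coordinate, so $\I(V_{\cc},V_{\cc})^{w}=0$ for $w\in\{w_0,s_1s_2,s_2s_1\}$. Because $c_1\geq c_2$, the two bounds on $\beta-\alpha$ in the description of $\RR^{s_1}$ are automatically satisfied, so $|\RR^{s_1}_{\cc_I,\cc_J}|=\max\{(\cc_I)_2,1\}\max\{(\cc_J)_2,1\}$ and $\I(V_{\cc},V_{\cc})^{s_1}=0$; for $\RR^{s_2}$ those bounds may be active when $c_1>c_2$, but a direct count shows $\sum_{I}(-1)^{|I|}|\RR^{s_2}_{\cc_I,\cc_J}|$ is the indicator of $c_1+c_2\leq(\cc_J)_1$, hence identically $0$, so $\I(V_{\cc},V_{\cc})^{s_2}=0$. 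It remains to compute $\I(V_{\cc},V_{\cc})^{1}$: using Lemma~\ref{lem:rccdd1} one lists the sets $\TT_{\cc_I,\cc_J}$ and the admissible parameter sets $\XX^{\aa}_{\cc_I,\cc_J}$; since every $\aa\in\TT_{\cc_I,\cc_J}$ has $a_3\leq c_1$, the only triple whose presence and parameter count depend on both the first and second coordinates of $\cc_I$ (resp. $\cc_J$) is $\aa=\cc$ itself, which lies in $\TT_{\cc,\cc}$ with $|\XX^{\cc}_{\cc,\cc}|=1$ (as $\aa(\cc,\cc)=a_3-a_1=0$) but in no $\TT_{\cc_I,\cc_J}$ with $(I,J)\neq(\emptyset,\emptyset)$; every other $\aa$ contributes a term depending on $\cc_I$ through one coordinate only, so its contribution cancels, leaving $\I(V_{\cc},V_{\cc})^{1}=1$. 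In the extremal case $c_2=1$ the second coordinate of $\cc_{\{2\}}$ and of $\cc_{\{1,2\}}$ is $0$: then $s_2\notin W_{\cc_I,\cc_J}$ whenever one of $\cc_I,\cc_J$ is $\cc_{\{2\}}$ or $\cc_{\{1,2\}}$, the $w=1$ contribution is forced to have $a_2=1$ throughout and yields $\I(V_{\cc},V_{\cc})^{1}=0$, and a short direct count gives $\I(V_{\cc},V_{\cc})^{s_2}=1$ with the other terms still vanishing; in either case $\I(V_{\cc},V_{\cc})=1$, so $V_{\cc}$ is irreducible.

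For the multiplicity, $C_{\cc}=C_{(c_1,c_2,c_1)}$ has $K_{c_1}$ as its largest principal congruence subgroup, so $V_{\cc}$ is fixed by $K_{c_1}$; arguing as in the proof of Proposition~\ref{prop:onemult}, every copy of $V_{\cc}$ in $\Res_K^{\mathbb{G}(F)}V$ lies in $V^{K_{c_1}}\simeq U_{(c_1,c_1,c_1)}$, so the multiplicity equals $\I(V_{\cc},U_{(c_1,c_1,c_1)})=\sum_{I\subseteq\{1,2\}}(-1)^{|I|}|\RR_{\cc_I,(c_1,c_1,c_1)}|$. Splitting by $w\in W$ and reusing the vanishing arguments above, every term with $w\neq 1$ drops out and the $w=1$ term again receives its sole surviving contribution from $\aa=\cc$, giving multiplicity $1$. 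The heart of the argument is the $w=1$ bookkeeping — keeping track of the exponents $\aa(\cc,\dd)$ and $\aa(\cc,\dd)'$ of Lemma~\ref{lem:rccdd1} and of the exceptional families of triples that appear when a coordinate of $\cc$ or $\dd$ vanishes — together with the $\RR^{s_2}$ count for $c_1>c_2$; I expect the extremal cases $\min\{c_1,c_2\}=1$ to need the most care, since there the sets $W_{\cc_I,\cc_J}$ themselves vary with $I$ and $J$.
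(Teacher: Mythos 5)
Your proposal is correct and runs on the same machinery as the paper's proof (Corollary~\ref{cor:altsum} together with the double-coset counts of Theorem~\ref{thm:cosets}), but it is organised differently and does more work than necessary. The paper fixes $\dd=(c_2,c_2,c_2)$ (in its convention $c_1\leq c_2$) and evaluates the single alternating sum $\I(V_{\cc},U_{\dd})=1$; since $U_{\dd}=V^{K_{c_2}}$ contains every constituent of $V_{\cc}$ with at least the multiplicity it has in $V_{\cc}$, the value $1$ forces irreducibility and multiplicity one in one stroke. Your separate sixteen-term computation of $\I(V_{\cc},V_{\cc})$ is therefore logically redundant once you have $\I(V_{\cc},U_{(c_1,c_1,c_1)})=1$, although it is valid and all of its numerical claims that I checked (including the extremal case $c_2=1$, where $\I(V_{\cc},V_{\cc})^{1}=0$ and $\I(V_{\cc},V_{\cc})^{s_2}=1$) are right; it also yields slightly more information, namely the self-intertwining number itself rather than only the conclusion. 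The one point you must not leave as an assertion is the crux of the $w=1$ bookkeeping: that for $\aa\neq\cc$ the summand $[\aa\in\TT_{\cc_I,\cc_J}]\cdot|\XX^{\aa}_{\cc_I,\cc_J}|$ depends on $\cc_I$ through a single coordinate and hence dies under $\sum_I(-1)^{|I|}$. This is true but needs the short argument: dependence on $(\cc_I)_1$ via the presence condition forces $a_1=c_1$, whence $a_3=c_1$, $a_3-a_1=0$ and $\aa(\cc_I,\cc_J)=0$ regardless, and any further dependence on $(\cc_I)_2$ then forces $a_2=c_2$, i.e.\ $\aa=\cc$; while dependence of the minima $\aa(\cc_I,\cc_J)$ or $\aa(\cc_I,\cc_J)'$ on $(\cc_I)_1$ when $a_1<c_1$ would require $c_1-1-a_1<(\cc_I)_3-a_3$, i.e.\ $a_3=a_1$, which again pins the minimum at $0$. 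With that supplied (and the analogous observation for the single sum over $I$ in your multiplicity computation, which is exactly the telescoping the paper carries out), the proof is complete.
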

\begin{proof}
We will assume that $c_{1}\leq c_{2}$ so that $\cc=(c_{1},c_{2},c_{2})$ and remark that the proof for the case where $c_{1}\geq c_{2}$ is similar.  Let $\dd=(c_{2},c_{2},c_{2})$, then as in Proposition~\ref{prop:onemult} it suffices to calculate
\begin{equation*}\I(V_{\cc},U_{\dd}) = |\RR_{\cc_{\emptyset},\dd}| - |\RR_{\cc_{\{1\}},\dd}| - |\RR_{\cc_{\{2\}},\dd}| + |\RR_{\cc_{\{1,2\}},\dd}|.\end{equation*}

First suppose that $c_{1}>1$.
We begin by examining the double cosets corresponding to non-trivial $w\in W$.  Let $s_{1}^{(\alpha,\beta)}$ be a representative which belongs to $\RR_{\cc_{\emptyset},\dd}^{s_{1}}$ but not to $\RR_{\cc_{\{2\}},\dd}$.  Then $\beta=c_{2}$, since we are decreasing $c_{2}$,  and $1\leq \alpha\leq c_{2}$, since the restriction $-c_{1}\leq c_{2}-\alpha\leq c_{2}$ does not play a role.  Thus we see that $\RR_{\cc_{\emptyset},\dd}^{s_{1}} = \RR_{\cc_{\{2\}},\dd}^{s_{1}} \cup \{s_{1}^{(\alpha,c_{2})}: 1\leq \alpha \leq c_{2}\}$ and
$\RR_{\cc_{\{1\}},\dd}^{s_{1}} = \RR_{\cc_{\{1,2\}},\dd}^{s_{1}} \cup \{s_{1}^{(\alpha,c_{2})}: 1\leq \alpha \leq c_{2}\}$ by the same argument.  This means that the contribution of these representatives to the alternating sum is
\begin{equation*}\I(V_{\cc},U_{\dd})^{s_{1}} = (|\RR_{\cc_{\{2\}},\dd}^{s_{1}}|+c_{2}) - (|\RR_{\cc_{\{1,2\}},\dd}^{s_{1}}|+c_{2}) - |\RR_{\cc_{\{2\}},\dd}^{s_{1}}| + |\RR_{\cc_{\{1,2\}},\dd}^{s_{1}}| = 0.\end{equation*}

Similarly, let $s_{2}^{(\alpha,\beta)}$ be a representative lying in $\RR_{\cc_{\emptyset},\dd}^{s_{2}}$ but not in $\RR_{\cc_{\{1\}},\dd}^{s_{2}}$.  We are now decreasing $c_{1}$ so $\beta=c_{1}$ and $1\leq \alpha\leq c_{2}$ since the restriction $-c_{2}\leq c_{1}-\alpha\leq c_{2}$ is again irrelevant.  We therefore obtain $\RR_{\cc_{\emptyset},\dd}^{s_{2}} = \RR_{\cc_{\{1\}},\dd}^{s_{2}} \cup \{s_{2}^{(\alpha,c_{1})}: 1\leq \alpha \leq c_{2}\}$ and $\RR_{\cc_{\{2\}},\dd}^{s_{2}} = \RR_{\cc_{\{1,2\}},\dd}^{s_{2}} \cup \{s_{2}^{(\alpha,c_{1})}: 1\leq \alpha \leq c_{2}\}$ in the same manner giving
\begin{equation*}\I(V_{\cc},U_{\dd})^{s_{2}} = (|\RR_{\cc_{\{1\}},\dd}^{s_{2}}|+c_{2}) - |\RR_{\cc_{\{1\}},\dd}^{s_{2}}| - (|\RR_{\cc_{\{1,2\}},\dd}^{s_{2}}|+c_{2}) + |\RR_{\cc_{\{1,2\}},\dd}^{s_{2}}| = 0.\end{equation*}
Further, it is easy to check that we also have $\I(V_{\cc},U_{\dd})^{w} = 0$ for $w=s_{1}s_{2}$, $s_{2}s_{1}$ and $w_{0}$.  Hence, we only need to consider
\begin{equation*}\I(V_{\cc},U_{\dd})^{1} = |\RR^{1}_{\cc_{\emptyset},\dd}| - |\RR^{1}_{\cc_{\{1\}},\dd}|- |\RR^{1}_{\cc_{\{2\}},\dd}|+ |\RR^{1}_{\cc_{\{1,2\}},\dd}|.\end{equation*}

Now,
$\TT_{\cc_{\emptyset},\dd} = \TT_{\cc_{\{2\}},\dd} \cup \{ (a,c_{2},c_{2}) : 1\leq a \leq c_{1}\}$
and for each $\aa\in \TT_{\cc_{\{2\}},\dd}$ the only way that we can have $\aa(\cc_{\emptyset},\dd)>\aa(\cc_{\{2\}},\dd)$ is if the minimum occurs for $c_{2}-a_{2}$.  However, since $c_{2}-a_{2} = c_{3}-a_{2} \geq c_{3}-a_{3}$, this would imply that $a_{3}=a_{2}$ and so we would in fact have  $\aa(\cc_{\emptyset},\dd)=\aa(\cc_{\{2\}},\dd)=0$.  Consequently $\aa(\cc_{\emptyset},\dd)=\aa(\cc_{\{2\}},\dd)$ for every $\aa\in \TT_{\cc_{\{2\}},\dd}$ and
$\RR^{1}_{\cc_{\emptyset},\dd} = \RR^{1}_{\cc_{\{2\}},\dd} \cup \{ t_{(a,c_{2},c_{2}),1}: 1\leq a \leq c_{1}\}.$
Similarly,
$\TT_{\cc_{\{1\}},\dd} = \TT_{\cc_{\{1,2\}},\dd} \cup \{ (a,c_{2},c_{2}) : 1 \leq a \leq c_{1}-1\}$
and $\aa(\cc_{\{1\}},\dd)=\aa(\cc_{\{1,2\}},\dd)$ for every $\aa\in \TT_{\cc_{\{1,2\}},\dd}$, implying that $\RR^{1}_{\cc_{\{1\}},\dd} = \RR^{1}_{\cc_{\{1,2\}},\dd} \cup \{ t_{(a,c_{2},c_{2}),1}:1\leq a \leq c_{1}-1\}$.  Hence we obtain
\begin{equation*}\I(V_{\cc},U_{\dd}) = (|\RR^{1}_{\cc_{\{2\}},\dd}|+c_{1}) - (|\RR^{1}_{\cc_{\{1,2\}},\dd}|+(c_{1}-1))- |\RR^{1}_{\cc_{\{2\}},\dd}|+ |\RR^{1}_{\cc_{\{1,2\}},\dd}|  = 1.\end{equation*}
and $V_{\cc}$ is an irreducible subrepresentation of $U_{\dd}$ with multiplicity $1$.

Suppose now that $c_{1}=1$ so that $\cc=(1,c_{2},c_{2})$ and $c_{\{1,2\}}= (0,c_{2}-1,c_{2}-1)$.  While we still have
$\RR_{\cc_{\emptyset},\dd}^{s_{1}} = \RR_{\cc_{\{2\}},\dd}^{s_{1}} \cup \{ s_{1}^{(\alpha,c_{2})} : 1\leq \alpha \leq c_{2}\}$ it transpires that $\RR_{\cc_{\{1\}},\dd}^{s_{1}} = \RR_{\cc_{\{1,2\}},\dd}^{s_{1}} = \emptyset$ since $s_{1}$ does not belong to $W_{\cc_{\{1\}},\dd}$ or $W_{\cc_{\{1,2\}},\dd}$.  Thus in this case the contribution from these representatives becomes
\begin{equation*}\I(V_{\cc},U_{\dd})^{s_{1}} = (|\RR_{\cc_{\{2\}},\dd}^{s_{1}}|+c_{2}) - 0 - |\RR_{\cc_{\{2\}},\dd}^{s_{1}}| + 0 = c_{2}.\end{equation*}
In contrast, reducing $c_{1}$ no longer changes the inequalities in Theorem \ref{thm:cosets}(iii)  so
$\RR_{\cc_{\emptyset},\dd}^{s_{2}} = \RR_{\cc_{\{1\}},\dd}^{s_{2}}$
and
$\RR_{\cc_{\{2\}},\dd}^{s_{2}} = \RR_{\cc_{\{1,2\}},\dd}^{s_{2}} \cup \{s_{2}^{(\alpha,c_{2})}\}$ since in $\cc_{\{1,2\}}$ we also decrease the third entry.  Consequently,
\begin{equation*}\I(V_{\cc},U_{\dd})^{s_{2}} = |\RR_{\cc_{\{1\}},\dd}^{s_{2}}| - |\RR_{\cc_{\{1\}},\dd}^{s_{2}}| - (|\RR_{\cc_{\{1,2\}},\dd}^{s_{2}}| +1)+ |\RR_{\cc_{\{1,2\}},\dd}^{s_{2}}| = -1.\end{equation*}
Further, $\RR_{\cc_{\emptyset},\dd}^{s_{1}s_{2}} = \RR_{\cc_{\{2\}},\dd}^{s_{1}s_{2}} \cup \{s_{1}s_{2}^{(c_{2})}\}$ and $\RR_{\cc_{\{1\}},\dd}^{s_{1}s_{2}} = \RR_{\cc_{\{1,2\}},\dd}^{s_{1}s_{2}} = \emptyset$ giving
\begin{equation*}\I(V_{\cc},U_{\dd})^{s_{1}s_{2}} = (|\RR_{\cc_{\{2\}},\dd}^{s_{1}s_{2}}|+1) - 0 - |\RR_{\cc_{\{2\}},\dd}^{s_{1}s_{2}}| + 0 = 1\end{equation*}
while $\I(V_{\cc},U_{\dd})^{w} = 0$ for $w=s_{2}s_{1}$ and $w_{0}$.

Now, $\TT_{\cc_{\emptyset},\dd} = \TT_{\cc_{\{1\}},\dd}$ with $\aa(\cc_{\emptyset},\dd) = \aa(\cc_{\{1\}},\dd) = 0$ for all $\aa\in \TT_{\cc_{\{1\}},\dd}$ since we will always have $c_{1}=a_{1}$.  Thus $\RR_{\cc_{\emptyset},\dd}^{1}$ and  $\RR_{\cc_{\{1\}},\dd}^{1}$ are equal.  However, since in $\cc_{\{1,2\}}$ we also reduce the $c_{3}$ entry, we have
$\TT_{\cc_{\{2\}},\dd} = \TT_{\cc_{\{1,2\}},\dd} \cup \{ (1,a_{2},c_{2}): 1\leq a_{2} \leq c_{2}-1\}$.  Again $\aa(\cc_{\{2\}},\dd) = \aa(\cc_{\{1,2\}}, \dd) =0$ for every $\aa\in \TT_{\cc_{\{1,2\}}, \dd}$ so
$\RR_{\cc_{\{1\}},\dd}^{1} = \RR_{\cc_{\{1,2\}},\dd}^{1} \cup \{t_{(1,a_{2},c_{2}),1} : 1\leq a_{2} \leq c_{2}-1\}$.  Thus
\begin{equation*}\I(V_{\cc},U_{\dd})^{1} = |\RR_{\cc_{\{1\}},\dd}^{1}| - |\RR_{\cc_{\{1\}},\dd}^{1}| - (|\RR_{\cc_{\{1,2\}},\dd}^{1}| + (c_{2}-1)) + |\RR_{\cc_{\{1,2\}},\dd}^{1}| = -(c_{2}-1).\end{equation*}
Hence, overall we obtain
\begin{equation*}\I(V_{\cc},U_{\dd}) = -(c_{2}-1) + c_{2} + (-1) + 1 + 0 + 0 = 1\end{equation*}
and $V_{\cc}$ is an irreducible subrepresentation of $U_{\dd}$ with multiplicity $1$.
\end{proof}


\section{Three descendants} \label{sec:three}

\begin{figure}[t]\fontsize{9}{9}\selectfont{
$$\xymatrix@C=1pt{
& \disq{\cc}{(c_{1},c_{2},c_{1}+c_{2}-k)} \ar[dl]\ar[d]\ar[dr] & \\
\disq{\cc_{\{1\}}}{(c_{1}-1,c_{2},c_{1}+c_{2}-k)} \ar[d]\ar[dr] &
\disq{\cc_{\{3\}}}{(c_{1},c_{2},c_{1}+c_{2}-k-1)} \ar[dl]\ar[dr] &
\disq{\cc_{\{2\}}}{(c_{1},c_{2}-1,c_{1}+c_{2}-k)} \ar[dl]\ar[d]\\
\disq{\cc_{\{1,3 \}}}{(c_{1}-1,c_{2},c_{1}+c_{2}-k-1)} \ar[dr] &
\disq{\cc_{\{1,2\}}}{(c_{1}-1,c_{2}-1,c_{1}+c_{2}-k)} \ar[d] &
\disq{\cc_{\{2,3\}}}{(c_{1},c_{2}-1,c_{1}+c_{2}-k-1)}\ar[dl] \\
& \disq{\cc_{\{1,2,3\}}}{(c_{1}-1,c_{2}-1,c_{1}+c_{2}-k-1)}}$$}
\begin{caption} {The three descendant case with $1<k<\min\{c_{1},c_{2}\}$\label{fig:threegen}} \end{caption}
\end{figure}

\begin{figure}[t]\fontsize{9}{9}\selectfont{
$$\xymatrix@C=1pt{
& \disq{\cc}{(c_{1},c_{2},c_{1}+c_{2}-1)} \ar[dl]\ar[d]\ar[dr] & \\
\disq{\cc_{\{1\}}}{(c_{1}-1,c_{2},c_{1}+c_{2}-1)} \ar[d] &
\disq{\cc_{\{3\}}}{(c_{1},c_{2},c_{1}+c_{2}-2)} \ar[dl]\ar[dr] &
\disq{\cc_{\{2\}}}{(c_{1},c_{2}-1,c_{1}+c_{2}-1)} \ar[d]\\
\disq{\cc_{\{1,3 \}}}{(c_{1}-1,c_{2},c_{1}+c_{2}-2)} \ar@{-->}[dr] & &
\disq{\cc_{\{2,3\}}}{(c_{1},c_{2}-1,c_{1}+c_{2}-2)}\ar@{-->}[dl] \\
& \disq{\cc_{\{1,2\}}=\cc_{\{1,2,3\}}}{(c_{1}-1,c_{2}-1,c_{1}+c_{2}-2)}}$$}
\begin{caption} {The three descendant case with $k=1$\label{fig:threeother}} \end{caption}
\end{figure}

The remaining case, where $\cc$ has three triples immediately beneath it in $\TT$ (see Figure \ref{fig:threegen}), does not appear in the decomposition of $V^{K_{1}}$ and we find that these components are reducible in general.  Consider $\cc=(c_{1},c_{2},c_{3})$ as part of a chain of triples
\begin{equation*}(c_{1},c_{2},c_{1}+c_{2}) \succeq \cdots \succeq \cc \succeq \cdots \succeq (c_{1},c_{2},\max\{c_{1},c_{2}\}).\end{equation*}
We let $k$ denote the position of $\cc$ in this chain, so that $c_{3}=c_{1}+c_{2}-k$, and $\ell=\min\{c_{1},c_{2}\}$ the length of the chain.  The number of intertwining operators $\I(V_{\cc},V_{\cc})$ turns out to be a polynomial in $q$ whose degree is the minimum of $k$ and $\ell-k$.  Further, two triples correspond to equivalent components precisely when their chains start at the same level $c_{1}+c_{2}$, they have the same position $k$ in their chain and that position is in the first half of the chain.  Here we note that when $k=1$ the triples $\cc_{\{1,3\}}$ and $\cc_{\{1,2,3\}}$ will be equal (see Figure \ref{fig:threeother}) so their contributions will cancel in the alternating sum for $V_{\cc}$.

\begin{theorem} \label{thm:threedesc}
Let $\cc=(c_{1},c_{2},c_{1}+c_{2}-k)$ with $0<k<\ell=\min\{c_{1},c_{2}\}$, then
\begin{equation*}\I(V_{\cc},V_{\cc}) = \left\{ \begin{array}{ll}
q-2                 & \text{if $k=1$;}\\
(q-1)^2q^{k-2}      & \text{if $1< k \leq \lfloor\ell/2\rfloor$;}\\
(q-1)q^{\ell-k-1}   & \text{if $\lfloor\ell/2\rfloor < k < \ell-1$;}\\
(q-1)               & \text{if $k=\ell-1$.}
\end{array}
\right.\end{equation*}
Moreover, let $\dd=(d_{1},d_{2},d_{1}+d_{2}-k')$ with $0<k'<\ell'=\min\{d_{1},d_{2}\}$.  If
we have
\begin{enumerate}[(i)]
\item $c_{3}=d_{3}$;
\item $k=k'$; and
\item $k\leq \lfloor \min\{\ell,\ell'\}/2 \rfloor$
\end{enumerate}
then $V_{\cc}\simeq V_{\dd}$, otherwise $\I(V_{\cc},V_{\dd})=0$.
\end{theorem}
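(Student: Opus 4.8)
The plan is to compute $\I(V_\cc,V_\dd)$ from Corollary~\ref{cor:altsum} via the explicit coset description of Theorem~\ref{thm:cosets}, after two preliminary reductions. First, the identity $V^{K_n}=\bigoplus_{\mathtt{e}\preceq(n,n,n)}V_{\mathtt{e}}$ exhibits $V_\cc$ as a summand of $V^{K_{c_3}}$ that meets $V^{K_{c_3-1}}$ trivially (since $\cc\not\preceq(c_3-1,c_3-1,c_3-1)$), so every irreducible constituent $\sigma$ of $V_\cc$ satisfies $\sigma=\sigma^{K_{c_3}}$ and $\sigma^{K_{c_3-1}}=0$; hence $V_\cc$ and $V_\dd$ have no common constituent, and $\I(V_\cc,V_\dd)=0$, whenever $c_3\neq d_3$. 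So assume $c_3=d_3$. Second, $0<k<\ell$ and $0<k'<\ell'$ force $\min\{c_1,c_2\}\geq2$ and $\min\{d_1,d_2\}\geq2$, so every coordinate of every $\cc_I$ ($I\subseteq S_\cc=\{1,2,3\}$) and of every $\dd_J$ is at least $1$; hence $W_{\cc_I,\dd_J}$ is always the full Weyl group and no degenerate case of Theorem~\ref{thm:cosets} occurs. In the decomposition $\I(V_\cc,V_\dd)=\sum_{w\in W}\I(V_\cc,V_\dd)^w$ the contribution of every non-trivial $w$ vanishes: $\RR^{w_0}_{\cc_I,\dd_J}=\{w_0\}$ is constant; for $w=s_1s_2$ and $s_2s_1$ the bound in Theorem~\ref{thm:cosets}(iv),(v) depends on $\cc_I$ through a single one of its coordinates; and for $w=s_1,s_2$ the third coordinate enters only inside a $\min$ with $\underline{d}_{2}$ or $\underline{d}_{1}$, which never selects it because the third coordinate of every $\cc_I$ stays $\geq c_3-1\geq d_2$ and that of every $\dd_J$ stays $\geq d_3-1\geq c_2$ (using $k<c_1$, $k'<d_1$, $c_3=d_3$). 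In each case $|\RR^w_{\cc_I,\dd_J}|$ factors through a proper subset of the coordinates of $\cc_I$, so the signed sum over $I$ annihilates it. Thus $\I(V_\cc,V_\dd)=\I(V_\cc,V_\dd)^1$.

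For the trivial element, Lemma~\ref{lem:rccdd1} gives $|\RR^1_{\cc_I,\dd_J}|=\sum_{\aa\in\TT^1}|\XX^{\aa}_{\cc_I,\dd_J}|$, with $|\XX^{\aa}_{\cc_I,\dd_J}|:=0$ if $\aa\notin\TT_{\cc_I,\dd_J}$. Interchanging sums,
\begin{equation*}
\I(V_\cc,V_\dd)^1=\sum_{\aa\in\TT^1}N_\aa,\qquad N_\aa=\sum_{I\subseteq S_\cc,\ J\subseteq S_\dd}(-1)^{|I|+|J|}\,|\XX^{\aa}_{\cc_I,\dd_J}|.
\end{equation*}
Inclusion--exclusion over the cube $2^{\{1,2,3\}}$ annihilates any quantity that factors through a proper subset of the coordinates, so $N_\aa=0$ unless $|\XX^{\aa}_{\cc_I,\dd_J}|$ genuinely varies with each of $1\in I$, $2\in I$, $3\in I$ and, separately, with each of $1\in J$, $2\in J$, $3\in J$. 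By Definition~\ref{def:tcd}, decrementing the $i$th coordinate of $\cc$ changes $\aa(\cc_I,\dd_J)$ or $\aa(\cc_I,\dd_J)'$ only when the $i$th entry of $\aa$ is within that decrement of the $i$th entry of $\cc$; I would use this to pin the surviving $\aa$ to a short explicit list concentrated near $\cc$ and near $\dd$ --- essentially the boundary triples $(c_1-j,c_2-j,c_1+c_2-k)$ with $a_1+a_2=a_3$, for $j$ in an interval governed by $k$ and $\ell-k$, together with a couple of non-boundary neighbours of $\cc$ itself.

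Evaluating the finitely many surviving $N_\aa$ from the closed forms for $|\XX^{\aa}_{\cc_I,\dd_J}|$ recorded after Lemma~\ref{lem:rccdd1} and summing yields the first assertion; the four ranges $k=1$, $1<k\leq\lfloor\ell/2\rfloor$, $\lfloor\ell/2\rfloor<k<\ell-1$, $k=\ell-1$ are precisely those over which the shape of the surviving set of $\aa$ changes, and $k=1$ is the case in which $\cc_{\{1,2\}}=\cc_{\{1,2,3\}}$ so two terms cancel at the outset. The same analysis with $\cc\neq\dd$ (still $c_3=d_3$) shows that the surviving triples and their $N_\aa$ coincide with the diagonal ones exactly when $k=k'$ and $k\leq\lfloor\min\{\ell,\ell'\}/2\rfloor$, so $\I(V_\cc,V_\dd)=\I(V_\cc,V_\cc)$ in that case and $\I(V_\cc,V_\dd)=0$ otherwise. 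When (i)--(iii) hold, the first assertion applied to both $\cc$ and $\dd$ (with $k=k'$ in the first half of both chains) also gives $\I(V_\cc,V_\cc)=\I(V_\dd,V_\dd)$; writing $V_\cc\simeq\bigoplus_i m_i\sigma_i$ and $V_\dd\simeq\bigoplus_i n_i\sigma_i$ over pairwise inequivalent irreducibles, $\sum_i(m_i-n_i)^2=\I(V_\cc,V_\cc)-2\,\I(V_\cc,V_\dd)+\I(V_\dd,V_\dd)=0$, hence $m_i=n_i$ for every $i$ and $V_\cc\simeq V_\dd$.

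The step I expect to be the main obstacle is the survival analysis together with the final bookkeeping inside $\I(V_\cc,V_\dd)^1$: since $|\XX^{\aa}_{\cc_I,\dd_J}|$ is piecewise --- the dichotomy $a_1+a_2=a_3$ versus $a_1+a_2\neq a_3$, and the ``truncate at $0$'' convention in $\min{}'$ --- identifying exactly which $\aa$ survive and summing their contributions is a delicate, case-heavy calculation, and it is there that the four regimes in $k$ and, in the mixed case, the precise relation between $\cc$ and $\dd$ (encoded by $k=k'$ and $k\leq\lfloor\min\{\ell,\ell'\}/2\rfloor$) come into play.
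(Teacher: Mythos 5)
Your framework matches the paper's: reduce to $c_3=d_3$, kill the contributions of the non-trivial Weyl group elements by noting that $|\RR^w_{\cc_I,\dd_J}|$ is insensitive to the third coordinates (the paper phrases this as $\RR^w_{\cc_I,\dd_J}=\RR^w_{\cc_I,\dd_{J\cup\{3\}}}$), interchange sums to study the per-triple quantity $N_\aa=\sum_{I,J}(-1)^{|I|+|J|}|\XX^{\aa}_{\cc_I,\dd_J}|$ (the paper's $\I_\aa$), and deduce $V_\cc\simeq V_\dd$ from equality of the three intertwining numbers. Your opening argument that $c_3\neq d_3$ forces $\I(V_\cc,V_\dd)=0$ via levels of $K_n$-fixed vectors, and your closing $\sum_i(m_i-n_i)^2=0$ argument, are both clean and slightly more explicit than the paper's.

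However, there is a genuine gap: the survival analysis and the evaluation of the surviving $N_\aa$ --- which is the actual content of the theorem and occupies Lemmas~\ref{lem:kequal} through \ref{lem:ia2} of the paper --- is only gestured at, and your tentative description of the surviving set is wrong. The triples with $N_\aa\neq 0$ in the diagonal case are the chain $\aa=(c_1-i,c_2-i,c_3-i)$ for $0\leq i\leq\min\{k,\ell-k\}$ (so the third coordinate decreases along with the first two; it is not fixed at $c_1+c_2-k$ as in your list), and of these only the endpoint $i=k$ (when $k\leq\ell-k$) satisfies $a_1+a_2=a_3$; there are no additional ``non-boundary neighbours of $\cc$.'' The four regimes in $k$ then arise not from a change in the shape of this set but from how the telescoping sum $\sum_i\I_\aa$ terminates: whether the chain ends at $i=k$ (contributing $(q-1)(q-2)q^{k-2}$, the $a_1+a_2=a_3$ case where $\aa(\cc,\cc)'>\aa(\cc,\cc)$ for the mixed subsets $I\subseteq\{2\}$, $J\subseteq\{1\}$) or at $i=\ell-k<k$ (contributing $(q-1)^2q^{\ell-k-2}$). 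Likewise, in the off-diagonal case exactly one triple survives, namely $(c_1-k,d_2-k,c_3-k)$ or $(d_1-k,c_2-k,d_1+c_2-2k)$ according to the sign of $c_1-d_1$, and showing that everything else cancels requires the coordinate-by-coordinate argument of Lemma~\ref{lem:kequal} (e.g.\ if $c_3-a_3>\aa(\cc,\dd)$ then pairing $I$ with $I\cup\{3\}$ kills $N_\aa$, with a separate treatment of $k=1$ where $\cc_{\{1,2\}}=\cc_{\{1,2,3\}}$). Without carrying out this case analysis the stated formulas, and the exact conditions (i)--(iii), are not established.
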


We will prove Theorem \ref{thm:threedesc} in a series of steps.  Let $\cc$ and $\dd$ be as above.  When $c_{3}\neq d_{3}$ it is clear that we will have $\I(V_{\cc},V_{\dd})=0$ so we can assume that $c_{3}=d_{3}$.  In particular, this means that $d_{1},d_{2}<c_{3}$ and $c_{1},c_{2}<d_{3}$ so if $w\neq 1$ then we have $\RR^{w}_{\cc_{I},\dd_{J}} = \RR^{w}_{\cc_{I},\dd_{J\cup\{3\}}}$ for each $I,J\subseteq S=\{1,2,3\}$.  The contribution from the double cosets supported on non-trivial Weyl group elements is therefore $0$ and
\begin{equation*}\I(V_{\cc},V_{\dd}) = \sum_{I,J\subseteq S} (-1)^{|I|+|J|} | \RR^{1}_{\cc_{I},\dd_{J}}|.\end{equation*}
Consequently, for a fixed triple $\aa\in \TT_{\cc,\dd}$ we will consider the alternating sum
\begin{equation*}\I_{\aa} = \sum_{I,J,\subseteq S}(-1)^{|I|+|J|} |\XX^{\aa}_{\cc_{I},\dd_{J}}|\end{equation*}
where we take $|\XX^{\aa}_{\cc_{I},\dd_{J}}|=0$ for $\aa\notin \TT_{\cc_{I},\dd_{J}}$.  This gives
$\I(V_{\cc},V_{\dd}) = \sum_{\aa\in \TT_{\cc,\dd}} \I_{\aa}.$

We begin by showing that $V_{\cc}$ and $V_{\dd}$ will have no constituents in common if conditions (i\ndash iii) in the Theorem are not met.

\begin{lemma} \label{lem:kequal}
If $k\neq k'$, then $\I_{\aa}=0$ for every $\aa\in \TT_{\cc,\dd}$.
\end{lemma}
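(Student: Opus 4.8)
The plan is to fix $\aa\in\TT_{\cc,\dd}$ and prove $\I_\aa=0$ directly from the closed formula for $|\XX^\aa_{\cc_I,\dd_J}|$ in Lemma~\ref{lem:rccdd1}, in each case by exhibiting a sign-reversing involution on the set of pairs $(I,J)$ with $I,J\subseteq S$. Since $c_3=d_3$ and $k\ne k'$ force $c_1+c_2\ne d_1+d_2$, I may assume (by the bijection $\RR_{\cc_I,\dd_J}\leftrightarrow\RR_{\dd_J,\cc_I}$ of the remark following Theorem~\ref{thm:cosets}, or simply by running the same argument with the roles of $\cc$ and $\dd$, and of $I$ and $J$, interchanged) that $k<k'$, so $c_1+c_2<d_1+d_2$; note also $c_1,c_2\ge k+1$ and $d_1,d_2\ge k'+1\ge 3$, so none of $\cc,\dd,\cc_I,\dd_J$ is of one of the exceptional forms of Definition~\ref{D:Tcd}, and for $i\in\{1,2\}$ one has $(\cc_I)_i=c_i-\delta_{i\in I}$ and $(\dd_J)_j=d_j-\delta_{j\in J}$ (the coincidence $\cc_{\{1,2\}}=\cc_{\{1,2,3\}}$ when $k=1$ affects only the third coordinate).

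The key point is that $c_3$ and $d_3$ do not appear in the formula for $\aa(\cdot,\cdot)$; they enter $|\XX^\aa_{\cc_I,\dd_J}|$ only through $\aa(\cc_I,\dd_J)'$ (in which $d_3-a_3$ is redundant as $c_3=d_3$) and through the membership condition $a_3\le(\cc_I)_3,(\dd_J)_3$. Hence if $a_1+a_2\ne a_3$, then $|\XX^\aa_{\cc_I,\dd_J}|$ depends only on $\aa(\cc_I,\dd_J)$, and if moreover $a_3<c_3$ then neither this quantity nor the admissibility of $\aa$ depends on whether $3\in I$; pairing $I$ with $I\,\triangle\,\{3\}$ gives $\I_\aa=0$. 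If $a_1+a_2=a_3$, a direct computation gives $\aa(\cc,\dd)=\min\{a_1,a_2,c_1-a_1,c_2-a_2,d_1-a_1,d_2-a_2\}$, still free of $c_3,d_3$, while $\aa(\cc_I,\dd_J)'$ depends on the third coordinates only through $\min\{(\cc_I)_3,(\dd_J)_3\}$, i.e.\ only through the indicator of $3\in I\cup J$; when $a_3=c_3=d_3$ the identity $a_1+a_2=a_3=c_1+c_2-k$ forces $c_3-a_3=0$, hence $\aa(\cc_I,\dd_J)'=0$ and $|\XX^\aa_{\cc_I,\dd_J}|=1$ for every admissible $(I,J)$, and since these $(I,J)$ form a product of two nonempty subcubes (namely $2^F$ with $F=\{i:a_i<c_i\}$, resp.\ $\{i:a_i<d_i\}$) the alternating sum vanishes; when $a_3<c_3=d_3$ one must still verify that, after the above collapse of $3\in I$ and $3\in J$, sufficient Boolean symmetry survives, which is a short but fiddly check.

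The remaining case, $a_1+a_2\ne a_3$ with $a_3=c_3=d_3$, is the one that genuinely uses $c_1+c_2<d_1+d_2$, and I expect it to be the main obstacle. Here $3\notin I$ and $3\notin J$ are forced by admissibility, and substituting $a_3=c_1+c_2-k=d_1+d_2-k'$ rewrites $\aa(\cc_I,\dd_J)$ as $\min{}'\{a_1,a_2,(\cc_I)_1-a_1,(\cc_I)_2-a_2,(\dd_J)_1-a_1,(\dd_J)_2-a_2,\ a_1-c_1+k-\delta_{2\in I},\ a_2-d_2+k'-\delta_{1\in J}\}$. The plan is to produce an inert coordinate on which to pair: since $c_1+c_2<d_1+d_2$, either $c_2<d_2$ or $c_1<d_1$, and when $c_2<d_2$ the inequality $(\dd_J)_2-a_2\ge d_2-1-a_2\ge c_2-a_2\ge(\cc_I)_2-a_2$ shows the term $(\dd_J)_2-a_2$ is never the strict minimum, that toggling $2\in J$ (with $k'\ge 2$) changes no other term, and that $a_2\le c_2<d_2$ keeps $\aa\preceq\underline{\dd_J}$ — so pairing $J$ with $J\,\triangle\,\{2\}$ finishes this case. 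The hard part is the configuration $c_2\ge d_2$ (which forces $c_1<d_1$): one again wants an inert coordinate but must break the ties that arise exactly when $c_2=d_2$ or $d_1=c_1+1$, and to do so one combines the inequalities $a_1\ge c_1-k$ and $a_2\ge d_2-k'$ coming from $\aa\in\TT_{\cc,\dd}$ with the identity $k'-k=(d_1-c_1)+(d_2-c_2)$; once the inert coordinate is identified, the sign-reversing involution closes the argument as in the other cases.
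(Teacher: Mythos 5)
Your overall strategy---a sign-reversing involution pairing $I$ with $I\,\triangle\,\{i\}$ for a coordinate $i$ that is ``inert'' both for membership of $\aa$ in $\TT_{\cc_I,\dd_J}$ and for $|\XX^{\aa}_{\cc_I,\dd_J}|$---is exactly the telescoping device the paper uses. But your execution rests on a misreading of Definition~\ref{def:tcd}: in the terms $c_i-a_i$, $d_i-a_i$ of $\aa(\cc,\dd)$ the index $i$ runs over $1,2,3$, so $c_3-a_3$ and $d_3-a_3$ \emph{are} terms of $\aa(\cc,\dd)$. (This is forced by the asserted inequality $\aa(\cc,\dd)'\geq\aa(\cc,\dd)$, which holds automatically only because $\aa(\cc,\dd)'$ is a $\min{}'$ over a sublist, and it is how the quantity is used throughout the proofs of Theorem~\ref{thm:onedesc} and Lemma~\ref{lem:ia}.) Consequently your claim that for $a_1+a_2\neq a_3$ and $a_3<c_3$ the value $|\XX^{\aa}_{\cc_I,\dd_J}|$ does not depend on whether $3\in I$ fails precisely when $0<c_3-a_3=\aa(\cc,\dd)=m$: toggling $3\in I$ then lowers $\aa(\cc_I,\dd_J)$ from $m$ to $m-1$ and changes $|\XX^{\aa}|$ from $(q-1)q^{m-1}$ to $(q-1)q^{m-2}$ (or from $q-1$ to $1$). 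This missing case is not peripheral---it is where the hypothesis $k\neq k'$ actually enters. The paper first eliminates, by pairings of your kind, all $\aa$ with $c_3-a_3>\aa(\cc,\dd)$, then those with $c_1-a_1>\aa(\cc,\dd)$ or $d_2-a_2>\aa(\cc,\dd)$, and is left with triples satisfying $c_1-a_1=d_2-a_2=c_3-a_3=\aa(\cc,\dd)$; for these $a_1+c_2-a_3=k$ and $d_1+a_2-a_3=k'$, and $k\neq k'$ then forces either both $d_1$-terms or both $c_2$-terms strictly above the minimum, supplying the final inert coordinate. None of this appears in your outline, and the analogous terms $c_3-a_3$, $d_3-a_3$ are likewise missing from your formula for $\aa(\cc,\dd)$ in the case $a_1+a_2=a_3$.

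Separately, the case you single out as ``the main obstacle'' ($a_1+a_2\neq a_3$ with $a_3=c_3=d_3$) is in fact the benign one: there $\aa(\cc_I,\dd_J)=0$ for all admissible $(I,J)$, every $|\XX^{\aa}|$ equals $1$, and only the membership conditions matter. Meanwhile the two sub-cases you explicitly defer (``a short but fiddly check''; ``once the inert coordinate is identified'') are left unproved. As written, the proposal is an incomplete plan built on an incorrect formula rather than a proof.
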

\begin{proof}
Assume to the contrary that $\aa$ is a triple in $\TT_{\cc,\dd}$ with $\I_{\aa}\neq 0$.  We begin by showing that this cannot happen in the case where $a_{1}+a_{2}\neq a_{3}$.

Suppose that $c_{3}-a_{3}>\aa(\cc,\dd)$.  For each $I,J\subseteq S$ we see that $\aa$ belongs to $\TT_{\cc_{I},\dd_{J}}$ precisely when it belongs to $\TT_{\cc_{I\cup \{3\}}, \dd_{J}}$.  Moreover, if $\aa\in\TT_{\cc_{I},\dd_{J}}$ then $\aa(\cc_{I},\dd_{J}) = \aa(\cc_{I\cup \{3\}},\dd_{J})$ since decreasing $c_{3}$ by $1$ does not change the minimum.  However, this means that $\XX^{\aa}_{\cc_{I},\dd_{J}} = \XX^{\aa}_{\cc_{I\cup \{3\}},\dd_{J}}$ and we actually have
\begin{equation*}\I_{\aa} = \sum_{J\subseteq S} \sum_{I\subseteq \{1,2\}} (-1)^{|I|+|J|}\left(|\XX^{\aa}_{\cc_{I},\dd_{J}}| - |\XX^{\aa}_{\cc_{I\cup \{3\}},\dd_{J}}|\right) = 0.\end{equation*}

Now suppose that $c_{3}-a_{3}=\aa(\cc,\dd)$ but that $c_{1}-a_{1}>\aa(\cc,\dd)$.  When $k>1$ the same approach can be used to show that $\I_{\aa}=0$ so we need only consider the $k=1$ case.  Then $c_{3}=c_{1}+c_{2}-1$ gives $a_{1}+c_{2}-a_{3}=(c_{3}-a_{3})-(c_{1}-a_{1})+1 < 1$ implying that $\aa(\cc,\dd)=a_{1}+c_{2}-a_{3}=0$.  In particular, $\aa\notin \TT_{\cc_{I},\dd_{J}}$ if $2\in I$ and $\aa(\cc_{I},\dd_{J})=\aa(\cc_{I\cup\{1\}},\dd_{J})$ otherwise.  This again means that
\begin{equation*}\I_{\aa} = \sum_{J\subseteq S} \sum_{I\subseteq \{3 \}}(-1)^{|I|+|J|}\left(|\XX^{\aa}_{\cc_{I},\dd_{J}}| - |\XX^{\aa}_{\cc_{I\cup \{1\}},\dd_{J}}|\right) = 0.\end{equation*}

Similarly, $\I_{\aa}=0$ if $d_{2}-a_{2}>\aa(\cc,\dd)$ so the only triples $\aa$ that could correspond to non-zero $\I_{\aa}$ are those with $c_{1}-a_{1}=d_{2}-a_{2}=c_{3}-a_{3}=\aa(\cc,\dd)$.  Note that in this case $a_{1}+c_{2}-a_{3}=k$ and $d_{1}+a_{2}-a_{3}=k'$ with $c_{1}\leq d_{1}$ and $c_{2}\geq d_{2}$.  If $k<k'$ then we must have $c_{1}<d_{1}$.  Consequently, $d_{1}-a_{1}$ and $d_{1}+a_{2}-a_{3}$ are both greater than $\aa(\cc,\dd)$ and we can show that $\I_{\aa}=0$ since $k'>1$.   On the other hand, if $k>k'$ then $c_{2}>d_{2}$ implies that $c_{2}-a_{2}$ and $a_{1}+c_{2}-a_{3}$ are greater than $\aa(\cc,\dd)$ and $\I_{\aa}=0$.  Hence, when $k\neq k'$ there cannot be a triple $\aa$ with $a_{1}+a_{2}\neq a_{3}$ which has non-zero $\I_{\aa}$.

We now consider the case where $a_{1}+a_{2}=a_{3}$ and note that $\aa(\cc,\dd)$ reduces to the minimum of $a_{1}$, $a_{2}$, $c_{2}-a_{2}$, $d_{1}-a_{1}$ and $\aa(\cc,\dd)'$.  By an argument essentially identical to that given above we see that $\I_{\aa}$ can only be non-zero for triples $\aa$ with $c_{1}-a_{1}=d_{2}-a_{2}=c_{3}-a_{3}=\aa(\cc,\dd)'$ and these have $c_{2}-a_{2}=k$ and $d_{1}-a_{1}=k'$.  If $k<k'$ then $d_{1}-a_{1}>\aa(\cc,\dd)$ implying that $\I_{\aa}=0$ whereas $k>k'$ gives $c_{2}-a_{2}>\aa(\cc,\dd)$ and again $\I_{\aa}=0$.  Hence, we again see that when $k\neq k'$ no triples $\aa$ with $a_{1}+a_{2}= a_{3}$ have non-zero $\I_{\aa}$.
\end{proof}

\begin{lemma}
If $\cc\neq\dd$ but $k=k'$, then
\begin{equation*}\I(V_{\cc},V_{\dd}) = \left\{\begin{array}{ll}
\I_{(c_{1}-k,d_{2}-k,c_{3}-k)} & \text{if $c_{1}<d_{1}$ and $k\leq \lfloor \min\{\ell,\ell'\}/2\rfloor$}; \\
\I_{(d_{1}-k,c_{2}-k,d_{1}+c_{2}-2k)} & \text{if $c_{1}>d_{1}$ and $k\leq \lfloor \min\{\ell,\ell'\}/2\rfloor$}; \\
0 & \text{otherwise}.
\end{array}\right.\end{equation*}
\end{lemma}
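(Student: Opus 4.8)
We keep the notation of the preceding paragraphs. Since $\I(V_{\cc},V_{\dd})=0$ when $c_{3}\neq d_{3}$, assume $c_{3}=d_{3}$, so that $c_{1}+c_{2}=d_{1}+d_{2}$, the non-trivial Weyl contributions vanish, and $\I(V_{\cc},V_{\dd})=\sum_{\aa\in\TT_{\cc,\dd}}\I_{\aa}$. The opening portion of the proof of Lemma~\ref{lem:kequal}, which nowhere uses $k\neq k'$, shows that $\I_{\aa}$ can be non-zero only for \emph{candidate} triples: those with $c_{1}-a_{1}=d_{2}-a_{2}=c_{3}-a_{3}=\aa(\cc,\dd)$ when $a_{1}+a_{2}\neq a_{3}$, and those with $c_{1}-a_{1}=d_{2}-a_{2}=c_{3}-a_{3}=\aa(\cc,\dd)'$ together with $c_{2}-a_{2}=k$ and $d_{1}-a_{1}=k'$ when $a_{1}+a_{2}=a_{3}$. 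As $\cc\neq\dd$ forces $c_{1}\neq d_{1}$, the plan is: (1)~enumerate all candidates, separately for $c_{1}<d_{1}$ and for $c_{1}>d_{1}$; (2)~establish a telescoping criterion forcing $\I_{\aa}=0$ for all but at most one of them; and (3)~read off the survivor together with the range of $k$ for which it exists.

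For step~(1), write $m$ for the common value $\aa(\cc,\dd)$ (respectively $\aa(\cc,\dd)'$). Using $c_{3}=d_{3}$, every candidate with $a_{1}+a_{2}\neq a_{3}$ must have the form $\aa^{(m)}=(c_{1}-m,\,d_{2}-m,\,c_{3}-m)$, and a direct evaluation of $\aa^{(m)}(\cc,\dd)$ shows it equals $m$ exactly when $0\le m$ and $m$ is bounded above by each of $\lfloor c_{1}/2\rfloor$, $\lfloor d_{2}/2\rfloor$, $c_{2}-k$, $d_{1}-k$ and by the quantities $c_{2}-d_{2}+m$, $d_{1}-c_{1}+m$. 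When $c_{1}<d_{1}$ (so $c_{2}>d_{2}$) the last two constraints are vacuous and one obtains candidates $\aa^{(m)}$ for $0\le m\le\min\{\lfloor c_{1}/2\rfloor,\lfloor d_{2}/2\rfloor,c_{2}-k,d_{1}-k,k\}$; when $c_{1}>d_{1}$ the condition $\aa\preceq\underline{\cc},\underline{\dd}$ already fails unless $m\ge c_{1}-d_{1}=d_{2}-c_{2}$, while for such $m$ one of $c_{2}-d_{2}+m$, $d_{1}-c_{1}+m$ is strictly below $m$, so there are \emph{no} candidates with $a_{1}+a_{2}\neq a_{3}$. The extra constraints in the $a_{1}+a_{2}=a_{3}$ case pin down the unique triple $\tilde{\aa}=(d_{1}-k,\,c_{2}-k,\,d_{1}+c_{2}-2k)$, and one checks, using $c_{1}+c_{2}=d_{1}+d_{2}$, that $\tilde{\aa}(\cc,\dd)'=c_{1}-d_{1}+k$ and that $\tilde{\aa}\in\TT_{\cc,\dd}$; when $c_{1}<d_{1}$ this $\tilde{\aa}$ coincides with $\aa^{(c_{1}-d_{1}+k)}$ whenever that index is non-negative.

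For step~(2), the criterion is: if a candidate $\aa$ satisfies $c_{2}-a_{2}>\aa(\cc,\dd)$ and $k>\aa(\cc,\dd)$ then $\I_{\aa}=0$. Indeed $a_{1}+c_{2}-a_{3}=k$ for every candidate, so pairing $\cc_{I}\leftrightarrow\cc_{I\cup\{2\}}$ over $I\subseteq S\setminus\{2\}$: decreasing $c_{2}$ by one alters only the terms $c_{2}-a_{2}$ and $a_{1}+c_{2}-a_{3}$ in the formula for $\aa(\cc_{I},\dd_{J})$, and both remain $\ge\aa(\cc,\dd)\ge\aa(\cc_{I},\dd_{J})$, so none of $\aa(\cc_{I},\dd_{J})$, $\aa(\cc_{I},\dd_{J})'$ (which involves no $c_{2}$) or $\TT$-membership changes, whence $\XX^{\aa}_{\cc_{I},\dd_{J}}=\XX^{\aa}_{\cc_{I\cup\{2\}},\dd_{J}}$ and the alternating sum telescopes to $0$. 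Now for step~(3): when $c_{1}<d_{1}$ one has $c_{2}-a_{2}=c_{2}-d_{2}+m>m=\aa(\cc,\dd)$ for each $\aa^{(m)}$ with $m<k$, and likewise $c_{2}-a_{2}=k>\aa(\cc,\dd)$ for $\tilde{\aa}$, so every candidate except $\aa^{(k)}$ is killed; the triple $\aa^{(k)}=(c_{1}-k,d_{2}-k,c_{3}-k)$ is a genuine candidate if and only if $k\le\min\{\lfloor c_{1}/2\rfloor,\lfloor d_{2}/2\rfloor,\lfloor c_{2}/2\rfloor,\lfloor d_{1}/2\rfloor\}=\lfloor\min\{\ell,\ell'\}/2\rfloor$, which yields the first case of the lemma and the value $0$ otherwise. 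When $c_{1}>d_{1}$, the single candidate $\tilde{\aa}=(d_{1}-k,c_{2}-k,d_{1}+c_{2}-2k)$ has $c_{2}-a_{2}=k$ and $\aa(\cc,\dd)=\min\{d_{1}-k,c_{2}-k,k\}$, so the criterion kills it precisely when $k>\min\{d_{1}-k,c_{2}-k\}$, i.e.\ when $k>\lfloor\min\{d_{1},c_{2}\}/2\rfloor=\lfloor\min\{\ell,\ell'\}/2\rfloor$; this gives the second case of the lemma and $0$ otherwise.

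The main obstacle is the bookkeeping when $k=1$ (equivalently $k'=1$): then $\cc_{\{1,2\}}=\cc_{\{1,2,3\}}$, so these two summands already cancel and the clean pairing of the full Boolean lattice in step~(2) is unavailable. In that case the only non-surviving candidate is $\aa^{(0)}=(c_{1},d_{2},c_{3})$, which has $a_{1}=\underline{c}_{1}$ and $a_{3}=\underline{c}_{3}$ and therefore fails $\aa\preceq\underline{\cc_{I}}$ as soon as $1\in I$ or $3\in I$; hence it contributes only for $I\in\{\emptyset,\{2\}\}$ and the single pair $\emptyset\leftrightarrow\{2\}$ suffices, once one observes that $c_{3}-a_{3}=0$ pins $\aa(\cc_{I},\dd_{J})=0$ throughout. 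The sub-case $c_{1}=d_{1}-1$, in which $\aa^{(0)}$ also satisfies $a_{1}+a_{2}=a_{3}$, is handled in the same way because $\aa(\cc,\dd)'$ involves no $c_{2}$. (The hypotheses $0<k<\ell$ and $0<k'<\ell'$ force $\min\{\ell,\ell'\}\ge 2$, so $k=1$ always lies within the bound $k\le\lfloor\min\{\ell,\ell'\}/2\rfloor$.)
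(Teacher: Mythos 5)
Your proof is correct and follows essentially the same route as the paper's: reduce to the candidate triples isolated in the proof of Lemma~\ref{lem:kequal}, then eliminate all but one of them by a telescoping pairing over the lattice of descendants, and read off the surviving triple together with the condition $k\leq\lfloor\min\{\ell,\ell'\}/2\rfloor$ for its existence. The only cosmetic differences are that you pair on the second coordinate of $\cc$ (via the terms $c_{2}-a_{2}$ and $a_{1}+c_{2}-a_{3}$) where the paper pairs on the first coordinate of $\dd$ (via $d_{1}-a_{1}$ and $d_{1}+a_{2}-a_{3}$), and that you spell out the $k=1$ bookkeeping explicitly rather than leaving it implicit.
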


\begin{proof}
Assume that $\aa\in \TT_{\cc,\dd}$ has $\I_{\aa}\neq 0$.  When $a_{1}+a_{2}\neq a_{3}$ the proof of Lemma~\ref{lem:kequal} tells us that $\aa=(c_{1}-i,d_{2}-i,d_{3}-i)$ where $i=\aa(\cc,\dd)$ and, moreover, that this can only happen if $c_{1}< d_{1}$ and $c_{2}> d_{2}$.  If $k> \aa(\cc,\dd)$ then $d_{1}-a_{1}$ and $d_{1}+a_{2}-a_{3}=k$ are both greater than $\aa(\cc,\dd)$ which implies that $\I_{\aa}=0$.  Thus we must have $i=k$ and $k\leq \min\{a_{1},a_{2},a_{3}-a_{1},a_{3}-a_{2}\}=\min\{\ell,\ell'\}-k$ gives $k\leq \lfloor \min\{\ell,\ell'\}/2\rfloor$.

Similarly, when $a_{1}+a_{2}=a_{3}$ we know that $\aa=(d_{1}-k,c_{2}-k,d_{1}+c_{2}-2k)$ where $k=\aa(\cc,\dd)$ and that this only happens for $c_{1}>d_{1}$ and $c_{2}<d_{2}$.  Further, $k\leq \min\{a_{1},a_{2}\}=\min\{\ell,\ell'\}-k$ again implies that $k\leq \lfloor \min\{\ell,\ell'\}/2\rfloor$.
\end{proof}

\begin{lemma} \label{lem:iasum}
For any $\cc$,
\begin{equation*}\I(V_{\cc},V_{\cc}) = \sum_{i=0}^{\min\{k,\ell-k\}} \I_{(c_{1}-i,c_{2}-i,c_{3}-i)}.\end{equation*}
\end{lemma}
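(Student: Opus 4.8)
The plan is to reduce the computation of $\I(V_{\cc},V_{\cc})$ to a sum over the single "diagonal" chain of triples $(c_1-i,c_2-i,c_3-i)$, $0\le i\le \min\{k,\ell-k\}$, by showing that every other $\aa\in\TT_{\cc,\cc}$ contributes $\I_{\aa}=0$. This is exactly the specialization $\cc=\dd$ of the analysis already carried out in the proof of Lemma~\ref{lem:kequal}: with $\cc=\dd$ we have $k=k'$, so the duplication argument there — pairing subsets $I$ with $I\cup\{3\}$ (or $I$ with $I\cup\{1\}$ in the $k=1$ subcase) whenever one of the "slack" quantities $c_3-a_3$, $c_1-a_1$, $d_2-a_2=c_2-a_2$ strictly exceeds $\aa(\cc,\dd)$ — forces $\I_{\aa}=0$ unless $c_1-a_1=c_2-a_2=c_3-a_3=\aa(\cc,\dd)$. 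First I would invoke that argument verbatim to conclude that the only candidate triples with $\I_{\aa}\ne 0$ are those of the form $\aa=(c_1-i,c_2-i,c_3-i)$ with $i=\aa(\cc,\cc)$.

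Next I would pin down which values of $i$ can actually occur. For $\aa=(c_1-i,c_2-i,c_3-i)$ one computes from Definition~\ref{def:tcd}, using $c_3=c_1+c_2-k$, that the terms $a_1+c_2-a_3$ and $d_1+a_2-a_3$ both equal $k$, while $a_1$, $a_2$, $a_3-a_1$, $a_3-a_2$ contribute $c_1-i$, $c_2-i$, $c_2-k$, $c_1-k$; the remaining terms $c_i-a_i$, $d_i-a_i$ are all equal to $i$. Hence $\aa(\cc,\cc)=\min\{i,\;k,\;\ell-k\}$ (after absorbing $c_1-i,c_2-i\ge \ell-i\ge i$ whenever $i\le\ell/2$, which will be forced). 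For the consistency condition $i=\aa(\cc,\cc)$ to hold we therefore need $i\le\min\{k,\ell-k\}$; and one must also check $\aa\in\TT_{\cc,\cc}$, i.e.\ $\aa\in\TT^1$ with the inequalities of~(\ref{E:Tcddef}), which reduces to $c_1-i,c_2-i\ge 1$ and $c_3-i\le\min\{(c_1-i)+c_2,\,c_1+(c_2-i)\}$ — all automatic in the stated range since $i<\ell\le\min\{c_1,c_2\}$. This gives exactly the index set $0\le i\le\min\{k,\ell-k\}$ appearing in the statement, and I would also note that the two descriptions of $\aa$ (from the $a_1+a_2\ne a_3$ and $a_1+a_2=a_3$ cases in Lemma~\ref{lem:kequal}) coincide here because $\cc=\dd$ collapses the asymmetry.

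Finally, assembling the pieces: since $\I(V_{\cc},V_{\cc})=\sum_{\aa\in\TT_{\cc,\cc}}\I_{\aa}$ by the displayed identity preceding Lemma~\ref{lem:kequal}, and all terms vanish except those with $\aa=(c_1-i,c_2-i,c_3-i)$ for $0\le i\le\min\{k,\ell-k\}$, we obtain
\begin{equation*}
\I(V_{\cc},V_{\cc}) = \sum_{i=0}^{\min\{k,\ell-k\}} \I_{(c_1-i,c_2-i,c_3-i)}
\end{equation*}
as claimed. The only real subtlety — and the step I would be most careful with — is verifying that in the $k=1$ corner case the vanishing argument still applies: there $\aa(\cc,\cc)$ can drop to $0$ via the term $a_1+c_2-a_3=k=1$ being the minimum, so the cancellation has to be organized by pairing $I$ with $I\cup\{1\}$ rather than $I\cup\{3\}$, exactly as in Lemma~\ref{lem:kequal}; once that is in place the index range $\min\{1,\ell-1\}=1$ is consistent and the sum has the two terms $i=0,1$, matching the $k=1$ line of Theorem~\ref{thm:threedesc}. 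Everything else is the bookkeeping of which terms in the $\min'$ defining $\aa(\cc,\cc)$ are active, which is routine.
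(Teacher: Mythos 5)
Your proposal is correct and follows essentially the same route as the paper: the paper's proof is a two-sentence appeal to the vanishing argument of Lemma~\ref{lem:kequal} (restricting to triples $\aa=(c_{1}-i,c_{2}-i,c_{3}-i)$ with $i=\aa(\cc,\cc)$) followed by the observation that the only active constraints are $i\leq a_{1}+c_{2}-a_{3}=k$ and $i\leq\min\{a_{3}-a_{1},a_{3}-a_{2}\}=\ell-k$. Your write-up simply makes explicit the bookkeeping (including the $k=1$ pairing with $I\cup\{1\}$ and the coincidence of the two cases when $\cc=\dd$) that the paper leaves implicit.
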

\begin{proof}
As in the previous Lemma, we know that only triples $\aa\in \TT_{\cc,\cc}$ of the form $\aa=(c_{1}-i,c_{2}-i,c_{3}-i)$ with $i=\aa(\cc,\cc)$ can possibly contribute to $\I(V_{\cc},V_{\cc})$.   However, in this case our only restrictions are that $i\leq a_{1}+c_{2}-a_{3}=k$ and $i\leq \min\{a_{3}-a_{1},a_{3}-a_{2}\} = \ell-k$ so we need to include all such $\aa$ with $0\leq i \leq \min\{k,\ell-k\}$.
\end{proof}

\begin{lemma}\label{lem:ia}
Let $\cc=\dd$ and $i\leq\min\{k,\ell-k\}$.  For $i\neq k$ we have
\begin{equation*}\I_{(c_{1}-i,c_{2}-i,c_{3}-i)} = \left\{\begin{array}{ll}
1               & \text{if $i=0$;}\\
q-2             & \text{if $i=1$;}\\
(q-1)^2q^{i-2}  & \text{if $i>1$}
\end{array}
\right.\end{equation*}
whereas if $k\leq \ell-k$
\begin{equation*}\I_{(c_{1}-k,c_{2}-k,c_{3}-k)} = \left\{\begin{array}{ll}
q-3                 & \text{if $k=1$;}\\
(q-1)(q-2)q^{k-2}   & \text{if $k>1$.}
\end{array}\right.\end{equation*}
\end{lemma}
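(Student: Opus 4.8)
The plan is to fix $i$ with $0\le i\le\min\{k,\ell-k\}$, put $\aa=(a_1,a_2,a_3)=(c_1-i,c_2-i,c_3-i)$, and evaluate $\I_\aa=\sum_{I,J\subseteq S}(-1)^{|I|+|J|}|\XX^\aa_{\cc_I,\cc_J}|$ directly (with $\cc=\dd$ throughout), reading each cardinality off the explicit formula for $|\XX^\aa_{\cc,\dd}|$ recorded just after Lemma~\ref{lem:rccdd1}. Since $a_1+a_2-a_3=k-i$, one has $a_1+a_2=a_3$ exactly when $i=k$ (which forces $k\le\ell-k$); so the two cases of the $|\XX^\aa_{\cc,\dd}|$ formula with $a_1+a_2\ne a_3$ apply when $i<k$ and the two with $a_1+a_2=a_3$ when $i=k$, which is why the statement splits accordingly.

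The crux is to determine $\aa(\cc_I,\cc_J)$ and $\aa(\cc_I,\cc_J)'$ for every $I,J$. Suppose first $k\ge2$, so that $\cc_I$ is $\cc$ with the $j$-th coordinate lowered by $1$ for each $j\in I$. Substituting $\aa$ into Definition~\ref{def:tcd}, the six quantities $c^I_j-a_j$ and $d^J_j-a_j$ ($j=1,2,3$) become $i-[j\in I]$ and $i-[j\in J]$; the hypothesis $i\le\min\{k,\ell-k\}$ (so that $c_1-i,c_2-i,c_1-k,c_2-k\ge i$, and $k-1\ge i$ when $i<k$) makes the remaining six entries of the $\min{}'$ irrelevant, each being either $\ge i$ or, when $i=k$, equal to one of the six above. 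Hence $\aa(\cc_I,\cc_J)=i$ when $I=J=\emptyset$ and $=i-1$ otherwise (read as $0$, not $-1$, when $i=0$), and likewise $\aa(\cc_I,\cc_J)'=\min{}'\{c^I_1-a_1,c^I_3-a_3,d^J_2-a_2,d^J_3-a_3\}$ equals $i$ unless one of $1\in I$, $3\in I$, $2\in J$, $3\in J$ holds, in which case it is $i-1$. Finally, a check of the inequalities defining $\TT_{\cc_I,\cc_J}$ shows $\aa\in\TT_{\cc_I,\cc_J}$ for all $I,J$ once $i\ge1$; when $i=0$ one has $\cc\not\preceq\cc_I$ unless $I=\emptyset$, so only $(I,J)=(\emptyset,\emptyset)$ contributes and $\I_\aa=1$.

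It remains to collapse the alternating sum. For $1\le i<k$: $|\XX^\aa_{\cc,\cc}|=(q-1)q^{i-1}$, and $|\XX^\aa_{\cc_I,\cc_J}|=(q-1)q^{i-2}$ (or $1$ if $i=1$) for $(I,J)\ne(\emptyset,\emptyset)$; since $\sum_{(I,J)\ne(\emptyset,\emptyset)}(-1)^{|I|+|J|}=\bigl(\sum_{I\subseteq S}(-1)^{|I|}\bigr)^2-1=-1$, we obtain $\I_\aa=q-2$ for $i=1$ and $(q-1)^2q^{i-2}$ for $i>1$. For $i=k\ge2$, every $\aa(\cc_I,\cc_J)$ lies in $\{k-1,k\}$, so we group the $64$ pairs by $\bigl(\aa(\cc_I,\cc_J),\aa(\cc_I,\cc_J)'\bigr)$: only $(\emptyset,\emptyset)$ is of type $(k,k)$, with $|\XX|=(q-1)q^{k-1}$; exactly the three pairs with $I\subseteq\{2\}$, $J\subseteq\{1\}$ and not both empty are of type $(k-1,k)$, with $|\XX|=2(q-1)q^{k-2}$ and signed count $-1$; the rest are of type $(k-1,k-1)$, with $|\XX|=(q-1)q^{k-2}$ and signed count $0$. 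Hence $\I_\aa=(q-1)q^{k-1}-2(q-1)q^{k-2}=(q-1)(q-2)q^{k-2}$. Lastly, for $i=k=1$ the triples $\cc_{\{1,2\}}$ and $\cc_{\{1,2,3\}}$ coincide (Figure~\ref{fig:threeother}), so the terms with $I\in\{\{1,2\},\{1,2,3\}\}$ cancel in pairs, as do those with $J\in\{\{1,2\},\{1,2,3\}\}$; the sum then runs over $I,J$ in the six-element set $T=\{\emptyset,\{1\},\{2\},\{3\},\{1,3\},\{2,3\}\}$, on which $\sum_{I\in T}(-1)^{|I|}=0$ still holds, and the same grouping — now with $\aa(\cc_I,\cc_J)\in\{0,1\}$, so that the value $\aa(\cc,\dd)'+1$ for $\aa(\cc,\dd)=0$ comes into play — gives the type-$(1,1)$ contribution $q-1$, type-$(0,1)$ contribution $2\cdot(-1)=-2$, and type-$(0,0)$ contribution $0$, whence $\I_\aa=q-3$.

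The substantive work is the second paragraph: pinning down which entries of $\aa(\cc_I,\cc_J)$ and $\aa(\cc_I,\cc_J)'$ are active as $I,J$ vary and verifying membership in $\TT_{\cc_I,\cc_J}$, with the $k=1$ degeneracy the one point demanding extra care. Once those two functions are identified, each remaining step is just a signed count over the subsets of a three- or six-element set.
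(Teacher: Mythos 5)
Your proof is correct and follows essentially the same route as the paper's: both evaluate $\I_{\aa}$ by determining $\aa(\cc_{I},\cc_{J})$ and $\aa(\cc_{I},\cc_{J})'$ for all $I,J$ and then collapsing the alternating sum via the signed-count identity, with only the term $(I,J)=(\emptyset,\emptyset)$ and the pairs $I\subseteq\{2\}$, $J\subseteq\{1\}$ surviving in the $i=k$ case. Your treatment is in fact slightly more careful than the paper's, since you explicitly verify membership in $\TT_{\cc_{I},\cc_{J}}$ and handle the $k=1$ degeneracy $\cc_{\{1,2\}}=\cc_{\{1,2,3\}}$, which the paper only flags elsewhere.
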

\begin{proof}
Let $\aa=(c_{1}-i,c_{2}-i,c_{3}-i)$ and note that $a_{1}+a_{2}= a_{3}$ if and only if $i=k$.

First suppose that $i\neq k$ and so $a_{1}+a_{2}\neq a_{3}$.  If $i=0$ then $\aa(\cc,\cc)=0$ and   $\aa\notin\TT_{\cc_{I},\cc_{J}}$ for $I,J\subseteq S$ not both empty.  This therefore implies that $\I_{\aa} = |\XX_{\cc,\cc}^{\aa}| = 1$.  For $i>0$ we have $\aa(\cc,\cc)=i$ and for $I,J\subseteq S$ not both empty $\aa\in \TT_{\cc_{I},\cc_{J}}$ with $\aa(\cc_{I},\cc_{J})=i-1$.  Consequently,
\begin{equation*}\I_{\aa} = |\XX_{\cc,\cc}^{\aa}| + \sum_{\text{other}\ I,J} (-1)^{|I|+|J|} |\XX_{\cc_{I},\cc_{J}}^{\aa}|  = |\XX_{\cc,\cc}^{\aa}| - |\XX_{\cc_{S},\cc_{S}}^{\aa}|.\end{equation*}
When $i=1$ we obtain
$\I_{\aa} = (q-1)-1=q-2$
whereas for $i>1$ this gives
$\I_{\aa}  = (q-1)q^{i-1} - (q-1)q^{i-2} = (q-1)^2q^{i-2}.$

Now suppose that $i=k$ and so $a_{1}+a_{2}=a_{3}$.  Again we see that $\aa(\cc,\cc)=i$ and $\aa\in \TT_{\cc_{I},\cc_{J}}$ with $\aa(\cc_{I},\cc_{J})=i-1$ for $I,J\subseteq S$ not both empty.  However, in this case  $\aa(\cc_{I},\cc_{J})'=i$ for $I\subseteq \{2\}$ and $J\subseteq \{1\}$ with $\aa(\cc_{I},\cc_{J})'=i-1$ otherwise.  Thus
\begin{eqnarray*}
\I_{\aa} &=& |\XX_{\cc,\cc}^{\aa}| - |\XX_{\cc_{\{1\}},\cc}^{\aa}| - |\XX_{\cc,\cc_{\{2\}}}^{\aa}| + |\XX_{\cc_{\{1\}},\cc_{\{2\}}}^{\aa}|
+ \sum_{\text{other}\ I,J} (-1)^{|I|+|J|} |\XX_{\cc_{I},\cc_{J}}^{\aa}|\\
&=& |\XX_{\cc,\cc}^{\aa}| - |\XX_{\cc_{\{1\}},\cc_{\{2\}}}^{\aa}|.
\end{eqnarray*}
When $k=1$ this gives $\I_{\aa}=(q-1)-2=q-3$ and when $k>1$ we get $\I_{\aa} = (q-1)q^{k-1} - 2(q-1)q^{k-2} = (q-1)(q-2)q^{k-1}$.
\end{proof}

\begin{lemma}\label{lem:ia2}
If $c_{1}<d_{1}$ and $k=k'\leq \lfloor\min\{\ell,\ell'\}/2\rfloor$, then
\begin{equation*}\I_{(c_{1}-k,d_{2}-k,c_{3}-k)} = \left\{\begin{array}{ll}
q-2            & \text{if $k=1$;}\\
(q-1)^2q^{k-2} & \text{if $k>1$.}
\end{array}
\right.\end{equation*}
\end{lemma}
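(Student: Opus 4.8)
The plan is to evaluate the single quantity $\I_{\aa}$ with $\aa=(c_1-k,\,d_2-k,\,c_3-k)$, which the preceding lemma identifies with $\I(V_{\cc},V_{\dd})$, by mimicking the proof of Lemma~\ref{lem:ia} in its $i\neq k$ cases. First I would record the arithmetic forced by the hypotheses: $c_3=d_3$ and $k=k'$ give $c_1+c_2=d_1+d_2$, so $c_1<d_1$ forces $c_2>d_2$, while $k\le\lfloor\min\{\ell,\ell'\}/2\rfloor$ gives $2k\le\min\{c_1,c_2,d_1,d_2\}$; in particular every entry of $\cc$, of $\dd$, and of each $\cc_I$ and $\dd_J$ is positive, so no exceptional case of Definition~\ref{D:Tcd} intervenes. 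Since $a_1+a_2=c_1+d_2-2k<c_1+c_2-2k=a_3$ we stay in the first branch of Lemma~\ref{lem:rccdd1} throughout, so $|\XX^{\aa}_{\cc_I,\dd_J}|=|(\R/\P^{\aa(\cc_I,\dd_J)})^{\times}|$ whenever $\aa\in\TT_{\cc_I,\dd_J}$ and is $0$ otherwise. A direct computation with Definition~\ref{def:tcd}, using $c_1+c_2=d_1+d_2$, then gives $\aa(\cc,\dd)=k$, with the minimum attained at the six terms $c_1-a_1$, $c_3-a_3$, $d_2-a_2$, $d_3-a_3$, $a_1+c_2-a_3$, $d_1+a_2-a_3$, while $c_2-a_2$ and $d_1-a_1$ are $\ge k+1$ and the four terms depending only on $\aa$ are $\ge k$.

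For $k>1$: since $c_3=c_1+c_2-k\le c_1+c_2-2$, each operation $\cc_{\{i\}}$ (respectively $\dd_{\{j\}}$) simply lowers the $i$th entry of $\cc$ (respectively the $j$th entry of $\dd$) by $1$, and $\cc_I$ lowers exactly the entries indexed by $I$. Each of these basic operations lowers at least one of the six minimizing terms to $k-1$, and since no term is lowered by more than $1$, none drops below $k-1$; hence for every $(I,J)\neq(\emptyset,\emptyset)$ one has $\aa\in\TT_{\cc_I,\dd_J}$ with $\aa(\cc_I,\dd_J)=k-1$, the membership inequalities holding with room to spare because $2k\le\min\{c_1,c_2,d_1,d_2\}$. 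Consequently all $63$ non-trivial summands equal $(q-1)q^{k-2}$, and
\begin{equation*}
\I_{\aa}=|\XX^{\aa}_{\cc,\dd}|-|\XX^{\aa}_{\cc_S,\dd_S}|=(q-1)q^{k-1}-(q-1)q^{k-2}=(q-1)^2q^{k-2}.
\end{equation*}

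For $k=1$: now $\cc_{\{1,2\}}=\cc_{\{1,2,3\}}$ and $\dd_{\{1,2\}}=\dd_{\{1,2,3\}}$, since membership in $\TT$ forces the third coordinate down as well; the corresponding pairs of summands cancel, and the alternating sum runs only over $I,J\in\{\emptyset,\{1\},\{2\},\{3\},\{1,3\},\{2,3\}\}$. For each such $(I,J)\neq(\emptyset,\emptyset)$ one checks $\aa\in\TT_{\cc_I,\dd_J}$ and that the relevant minimizing term is driven to $0$, so $|\XX^{\aa}_{\cc_I,\dd_J}|=1$; as $\sum(-1)^{|I|}$ over this reduced index set vanishes, this yields $\I_{\aa}=(q-1)+(0-1)=q-2$.

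The main obstacle is the bookkeeping in the $k>1$ step: one must check, across the full $2^{3}\times2^{3}$ table of operations, both that $\aa$ stays inside $\TT_{\cc_I,\dd_J}$ and that exactly one unit is removed from $\aa(\cc_I,\dd_J)$ each time. The slack $2k\le\min\{c_1,c_2,d_1,d_2\}$ reduces each individual verification to an easy inequality, and the $k=1$ case is straightforward once the identification $\cc_{\{1,2\}}=\cc_{\{1,2,3\}}$ is in hand.
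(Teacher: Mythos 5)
Your proof is correct and follows essentially the same route as the paper's, which simply observes that $\aa(\cc,\dd)=k$ while $\aa(\cc_{I},\dd_{J})=k-1$ for every non-trivial pair $(I,J)$ and then invokes the telescoping argument from the first part of Lemma~\ref{lem:ia}, giving $\I_{\aa}=|\XX^{\aa}_{\cc,\dd}|-|\XX^{\aa}_{\cc_{S},\dd_{S}}|$. Your handling of $k=1$ via the coincidence $\cc_{\{1,2\}}=\cc_{\{1,2,3\}}$ rather than via the observation that all non-trivial $\XX$-sets are singletons is only a cosmetic variation leading to the same value $q-2$.
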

\begin{proof}
Let $\aa=(c_{1}-k,d_{2}-k,c_{3}-k)$ and recall that $a_{1}+a_{2}\neq a_{3}$.  Then $\aa(\cc,\dd)=k$ and $\aa(\cc_{I},\dd_{I})=k-1$ for $I,J\subseteq S$ not both empty so the result follows by the argument for the first part of the previous Lemma.
\end{proof}

\begin{proof}[Proof of Theorem \ref{thm:threedesc}]
By Lemmas \ref{lem:iasum} and \ref{lem:ia} we see that if $k=1$ then
\begin{equation*}\I(V_{\cc},V_{\cc}) = \I_{(c_{1},c_{2},c_{3})} + \I_{(c_{1}-1,c_{2}-1,c_{3}-1)} = 1 + (q-3) = q-2\end{equation*}
and similarly when $1<k\leq \ell-k$
\begin{eqnarray*}
\I(V_{\cc},V_{\cc})
&=& \sum_{i=0}^{k} \I_{(c_{1}-i,c_{2}-i,c_{3}-i)}  \\
&=& 1 + (q-2) + (q-1)^2q + \cdots + (q-1)^2q^{k-3} + (q-1)(q-2)^{k-2} \\
&=& (q-1)^2q^{k-2}.
\end{eqnarray*}
However, if $\ell-k<k<\ell-1$ then
\begin{eqnarray*}
\I(V_{\cc},V_{\cc})
&=& \sum_{i=0}^{\ell-k} \I_{(c_{1}-i,c_{2}-i,c_{3}-i)}  \\
&=& 1 + (q-2) + (q-1)^2q + \cdots + (q-1)^2q^{\ell-k-3} + (q-1)^2q^{\ell-k-2} \\
&=& (q-1)q^{\ell-k-1}.
\end{eqnarray*}
and when $k=\ell-1$, with $\ell> 2$,
\begin{equation*}\I(V_{\cc},V_{\cc}) = \I_{(c_{1},c_{2},c_{3})} + \I_{(c_{1}-1,c_{2}-1,c_{3}-1)} = 1 + (q-2) = q-1.\end{equation*}

Finally, if $\cc$ and $\dd$ have $c_{3}=d_{3}$, $k=k'$ and
$k\leq\lfloor\min\{\ell,\ell'\}/2\rfloor$, then by the calculations above and Lemmas \ref{lem:iasum} and \ref{lem:ia2}
\begin{equation*}\I(V_{\cc},V_{\dd})=\I(V_{\cc},V_{\cc}) = \I(V_{\dd},V_{\dd}).\end{equation*}
Hence $V_{\cc}$ and $V_{\dd}$ must be equivalent.
\end{proof}

It should be noted that in proving the reducibility of $V_{\cc}$, we have discovered a certain amount of information about its decomposition.  Let $\ii=(i,i,i)$ for $0<i\leq \min\{k,\ell-k\}$ and consider the representation  $V^{i}_{\cc} = U^{i}_{\cc} / \sum_{\cc\prec \dd \preceq \cc-\ii} U^{i}_{\dd}$ where $U^{i}_{\dd} = \Ind^{C_{\cc-\ii}}_{C_{\dd}} 1$.  Clearly, $V_{\cc} = \Ind_{C_{\cc-\ii}}^{K} V^i_{\cc}$ and we may use the results of Section~\ref{sec:defn} to show that
\begin{equation*}\I(V^{i}_{\cc},V^{i}_{\cc}) = \sum_{I,J\subseteq S} (-1)^{|I|+|J|} | C_{\cc_{I}} \backslash C_{\cc-\ii} / C_{\cc_{J}}|.\end{equation*}
However, the $(C_{\cc_{I}},C_{\cc_{J}})$-double coset representatives in $C_{\cc-\ii}$ are precisely the $t_{\aa,x}$ in $\RR^{1}_{\cc_{I},\cc_{J}}$ which have $\aa\succeq \cc-\ii$.  Thus
\begin{equation*}\I(V^{i}_{\cc},V^{i}_{\cc}) = \sum_{\aa\succeq \cc-\ii} \I_{\aa},\end{equation*}
where $\I_{\aa}$ is as before, and Lemmas~\ref{lem:iasum} and \ref{lem:ia} immediately imply the following.

\begin{proposition}
Let $0 < i \leq \min\{k,\ell-k\}$.  For $i\neq k$ we have
\begin{equation*}\I(V^{i}_{\cc},V^{i}_{\cc}) = \left\{\begin{array}{ll}
(q-1) & \text{if $i=1$}; \\
(q-1)q^{i-1} & \text{if $i>1$}
\end{array} \right.
\end{equation*}
whereas if $k\leq \ell-k$
\begin{equation*}
\I(V^{k}_{\cc},V^{k}_{\cc}) = \left\{\begin{array}{ll}
(q-2) & \text{if $k=1$}; \\
(q-1)^2q^{k-2} & \text{if $k>1$}.
\end{array} \right.\end{equation*}
\end{proposition}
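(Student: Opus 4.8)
The plan is to read the result off the identity
\[\I(V^{i}_{\cc},V^{i}_{\cc}) = \sum_{\aa\succeq \cc-\ii} \I_{\aa}\]
derived just above, so that the only task is to determine which triples $\aa$ contribute and then to add up their contributions. First I would invoke the analysis already carried out in the proof of Lemma~\ref{lem:iasum} (which rests on the case distinctions in Lemma~\ref{lem:kequal}): among $\aa\in \TT_{\cc,\cc}$ only those of the form $\aa=(c_{1}-j,c_{2}-j,c_{3}-j)$ with $j=\aa(\cc,\cc)$ give $\I_{\aa}\neq 0$, and the defining inequalities of $\TT_{\cc,\cc}$ confine $j$ to $0\leq j\leq \min\{k,\ell-k\}$. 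Adjoining the extra constraint $\aa\succeq \cc-\ii$, which says exactly $j\leq i$, and using $i\leq \min\{k,\ell-k\}$, the admissible range shrinks precisely to $0\leq j\leq i$, whence
\[\I(V^{i}_{\cc},V^{i}_{\cc}) = \sum_{j=0}^{i} \I_{(c_{1}-j,c_{2}-j,c_{3}-j)}.\]

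Next I would substitute the values supplied by Lemma~\ref{lem:ia} and sum the resulting geometric series, exactly as in the proof of Theorem~\ref{thm:threedesc}. If $i\neq k$ then every index $j\leq i$ satisfies $j<k$, so the $j$th term is $1$, $q-2$, or $(q-1)^{2}q^{j-2}$ according as $j=0$, $j=1$, or $j>1$; since $\sum_{j=2}^{i}(q-1)^{2}q^{j-2}=(q-1)(q^{i-1}-1)$, the total is $q-1$ for $i=1$ and $(q-1)q^{i-1}$ for $i>1$. If $i=k$, which can only occur when $k\leq \ell-k$, the terms $j=0,\dots,k-1$ sum to $1$ when $k=1$ and to $(q-1)q^{k-2}$ when $k>1$, while the final term $j=k$ contributes $q-3$ when $k=1$ and $(q-1)(q-2)q^{k-2}$ when $k>1$; adding these gives $q-2$ and $(q-1)^{2}q^{k-2}$ respectively, which is the stated formula.

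There is no real obstacle here: the proposition is a corollary of the two cited lemmas, and the one step that deserves a line of justification is the identity displayed first, for which one checks that a representative $t_{\aa,x}$ from $\RR^{1}_{\cc_{I},\cc_{J}}$ lies in $C_{\cc-\ii}$ if and only if $\aa\succeq \cc-\ii$ (immediate from the valuations of its entries against the shape of $C_{\cc-\ii}$), so that $|C_{\cc_{I}}\backslash C_{\cc-\ii}/C_{\cc_{J}}| = \sum_{\aa\succeq \cc-\ii}|\XX^{\aa}_{\cc_{I},\cc_{J}}|$ and the alternating sum over $I,J\subseteq S$ reorganises into $\sum_{\aa\succeq \cc-\ii}\I_{\aa}$. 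Everything else is the bookkeeping above.
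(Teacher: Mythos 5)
Your proposal is correct and follows exactly the route the paper intends: the paper derives $\I(V^{i}_{\cc},V^{i}_{\cc})=\sum_{\aa\succeq\cc-\ii}\I_{\aa}$ and then asserts that Lemmas \ref{lem:iasum} and \ref{lem:ia} immediately give the result, which is precisely the bookkeeping you carry out (restricting to the diagonal triples $(c_{1}-j,c_{2}-j,c_{3}-j)$ with $0\leq j\leq i$ and summing the geometric series). Your added justification that $t_{\aa,x}\in C_{\cc-\ii}$ iff $\aa\succeq\cc-\ii$ matches the paper's own remark, and your arithmetic in all four cases checks out.
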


In particular, this means that the irreducible constituents of $V_{\cc}$ are induced from the irreducible constituents of $V_{\cc}^{\min\{k,\ell-k\}}$.

\section{Application to Steinberg representations}

In \cite{lees}, Lees defined a virtual representation $S_{r}$ of $\mathrm{GL}(n,\R/\P^{r})$ which possessed properties that were similar to those of the Steinberg representation of $\mathrm{GL}(n,\mathfrak{f})$.  Further, he stated without proof that $S_{r}$ was in fact a subrepresentation of the permutation representation over the subgroup of upper triangular matrices.  Although this is the case for $n=2$ and for $n=3$ with $r\leq 2$, we will show that $S_{r}$ is not a true representation for $r>2$.

When $n=3$, and pulling back to $K$, the expression of $S_{r}$ as an alternating sum of permutation representations reduces to
\begin{equation} \label{eq:sr}
[S_{r}] = \sum_{c_{1},c_{2}=0}^{r} (-1)^{c_{1}+c_{2}}[U_{(c_{1},c_{2},\max\{c_{1},c_{2}\})}].
\end{equation}
If $r=0$ then we obtain the trivial representation $S_{0}=V_{(0,0,0)}$ and
$r=1$ produces $S_{1} = V_{(1,1,1)}$, the Steinberg representation of $\mathrm{GL}(3,\mathfrak{f})$ pulled back to $K$.  More generally, $S_{r}$ can be constructed inductively in the following way.

\begin{lemma}\label{lem:sr} Let $r\geq 2$, then
\begin{equation} \label{eq:sr2}
[S_{r}] = [S_{r-2}] + \sum_{\cc} (-1)^{r-c_{1}} [V_{\cc}]
\end{equation}
where the sum runs over all triples $\cc=(c_{1},c_{2},r)\in \TT$ with $c_{1}\equiv c_{2}\ (\mathrm{mod}\ 2)$.
\end{lemma}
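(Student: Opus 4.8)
The plan is to re-express both sides of (\ref{eq:sr2}) as explicit $\Z$-linear combinations of the classes $[V_\dd]$, $\dd\in\TT$, and to check that the coefficient of each $[V_\dd]$ agrees; since we will exhibit the two coefficient functions as literally the same function on $\TT$, no linear independence of the $[V_\dd]$ is needed. The one input beyond (\ref{eq:sr}) is the inverse of Proposition~\ref{prop:altsum}, namely
\begin{equation*}
[U_\cc]=\sum_{\substack{\dd\in\TT\\\dd\preceq\cc}}[V_\dd]\qquad\text{in }\mathscr{K}_0(K),
\end{equation*}
which follows from Proposition~\ref{prop:altsum} by induction on $c_1+c_2+c_3$ (equivalently, by M\"obius inversion over the poset $\TT$) and generalizes the displayed decomposition of $V^{K_n}$. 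Substituting into (\ref{eq:sr}) and interchanging the two finite sums gives $[S_r]=\sum_{\dd\in\TT}f_r(\dd)\,[V_\dd]$, where
\begin{equation*}
f_r(\dd)=\sum_{\substack{0\le c_1,c_2\le r\\ d_1\le c_1,\ d_2\le c_2,\ d_3\le\max\{c_1,c_2\}}}(-1)^{c_1+c_2}
\end{equation*}
vanishes once $d_3>r$. Thus (\ref{eq:sr2}) is equivalent to the combinatorial identity, for all $\dd\in\TT$,
\begin{equation*}
f_r(\dd)-f_{r-2}(\dd)=\begin{cases}(-1)^{r-d_1}&\text{if }d_3=r\text{ and }d_1\equiv d_2\ (\mathrm{mod}\ 2),\\ 0&\text{otherwise.}\end{cases}
\end{equation*}

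To prove this I would first peel off the constraint $\max\{c_1,c_2\}\ge d_3$: writing $g_m(\dd)=\bigl(\sum_{c=d_1}^{m}(-1)^c\bigr)\bigl(\sum_{c=d_2}^{m}(-1)^c\bigr)$ for the signed count over the rectangle $[d_1,m]\times[d_2,m]$, one has $f_r(\dd)=g_r(\dd)-g_{d_3-1}(\dd)$ when $d_3\le r$ and $f_r(\dd)=0$ when $d_3>r$, where $g_{d_3-1}(\dd)=0$ automatically whenever $d_3=\max\{d_1,d_2\}$ (one of the two intervals is then empty). The decisive elementary fact is $\sum_{c=d}^{m}(-1)^c=\sum_{c=d}^{m-2}(-1)^c$, so $g_m(\dd)=g_{m-2}(\dd)$ for every $m$; since $g_{d_3-1}(\dd)$ does not involve $r$, this already forces $f_r(\dd)-f_{r-2}(\dd)=0$ whenever $d_3\le r-1$ (the boundary case $d_3=r-1$ uses $d_3-1=r-2$) and whenever $d_3>r$.

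It remains to treat $d_3=r$, where $f_r(\dd)-f_{r-2}(\dd)=g_r(\dd)-g_{r-1}(\dd)$. Expanding via $\sum_{c=d}^{r}(-1)^c=\sum_{c=d}^{r-1}(-1)^c+(-1)^r$ and putting $h_j=\sum_{c=d_j}^{r-1}(-1)^c$ for $j=1,2$ gives $g_r(\dd)-g_{r-1}(\dd)=(-1)^r(h_1+h_2)+1$, where $h_j=-(-1)^r$ if $d_j\not\equiv r\ (\mathrm{mod}\ 2)$ and $h_j=0$ if $d_j\equiv r\ (\mathrm{mod}\ 2)$. Running through the parity cases for $(d_1,d_2)$ relative to $r$ then yields $(-1)^{r-d_1}$ when $d_1\equiv d_2$ and $0$ when $d_1\not\equiv d_2$. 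Summing $f_r(\dd)-f_{r-2}(\dd)$ against $[V_\dd]$ over $\dd\in\TT$ and relabelling $\dd=\cc$ gives precisely (\ref{eq:sr2}). The only real work is this last parity bookkeeping; the reduction $g_m=g_{m-2}$ is what keeps it short, and the identities $S_0=V_{(0,0,0)}$, $S_1=V_{(1,1,1)}$ together with the case $r=2$ serve as sanity checks on the signs.
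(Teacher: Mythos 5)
Your argument is correct, and it is essentially the transpose of the paper's: the paper verifies (\ref{eq:sr2}) by expanding each $[V_{\cc}]$ on the right into $[U]$-classes via Proposition~\ref{prop:altsum} and comparing the coefficient of every $[U_{\cc}]$ with (\ref{eq:sr}), whereas you invert that relation to $[U_{\cc}]=\sum_{\dd\preceq\cc}[V_{\dd}]$ and compare coefficients of each $[V_{\dd}]$ instead. The trade-off is that your route needs this inversion as an extra input: it is true, and your induction/M\"obius-inversion sketch is the right justification (it generalises the displayed decomposition of $V^{K_{n}}$ into the $V_{\cc}$ with $\cc\preceq(n,n,n)$), but the paper never states it in this generality, so it should be recorded as a small auxiliary lemma rather than cited as known. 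What you gain is that the difference $f_{r}-f_{r-2}$ localises cleanly on the slice $d_{3}=r$, the constraint $\max\{c_{1},c_{2}\}\geq d_{3}$ is removed by one subtraction of rectangles, and the remaining check is a two-variable parity computation which you carry out correctly; the values at $r=0,1,2$ match the paper's $S_{0}=V_{(0,0,0)}$, $S_{1}=V_{(1,1,1)}$ and the displayed expression for $[S_{2}]$. One small imprecision: the identity $\sum_{c=d}^{m}(-1)^{c}=\sum_{c=d}^{m-2}(-1)^{c}$, and hence $g_{m}(\dd)=g_{m-2}(\dd)$, fails when $m=d$ (the left side is $(-1)^{d}$, the right side is an empty sum), so it does not hold ``for every $m$''; however, you only invoke it with $m=r$ and $d_{1},d_{2}\leq d_{3}\leq r-1<r$, where it is valid, so the proof stands as written.
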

\begin{proof}
This can easily be seen by comparing the coefficients of $U_{\cc}$ in (\ref{eq:sr}) and (\ref{eq:sr2}) for each $\cc\preceq(r,r,r)$.
\end{proof}

In particular, Lemma \ref{lem:sr} implies that
\begin{equation*}[S_{2}] = [V_{(2,2,2)}]-[V_{(1,1,2)}]+[V_{(0,2,2)}]+[V_{(2,0,2)}] + [V_{(0,0,0)}].\end{equation*}
Further, $V_{(1,1,2)}$,  $V_{(0,2,2)}$ and $V_{(2,0,2)}$ are all equivalent so this is actually the sum of three irreducible representations $S_{2} \simeq V_{(2,2,2)}+V_{(1,1,2)} + V_{(0,0,0)}$.  However, when $r>2$ we see that $V_{(r-1,r-1,r)}$ still appears with coefficient $-1$ in (\ref{eq:sr2}) and in this case $V_{(r-1,r-1,r)}$ has no constituents in common with any other component $V_{\cc}$.  Hence it cannot cancel with any other term in (\ref{eq:sr2}) and we have shown the following.

\begin{proposition}
$S_{r}$ is a true representation if and only if $r\leq 2$.
\end{proposition}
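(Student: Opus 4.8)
The plan is to treat the two implications separately; the ``if'' direction is immediate from decompositions already in hand. For $r=0$ one has $S_0=V_{(0,0,0)}$ and for $r=1$ one has $S_1=V_{(1,1,1)}$, both genuine irreducibles, while for $r=2$ Lemma~\ref{lem:sr} combined with the equivalences $V_{(1,1,2)}\simeq V_{(0,2,2)}\simeq V_{(2,0,2)}$ supplied by Theorem~\ref{thm:onedesc} collapses the signed sum to $S_2\simeq V_{(2,2,2)}\oplus V_{(1,1,2)}\oplus V_{(0,0,0)}$, an honest representation.

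For the ``only if'' direction, fix $r>2$ and put $\cc_0=(r-1,r-1,r)\in\TT$. This is a three-descendant triple with $k=r-2$ and $\ell=\min\{c_1,c_2\}=r-1$, so $k=\ell-1\ge 1$, and Theorem~\ref{thm:threedesc} gives $\I(V_{\cc_0},V_{\cc_0})\ge q-2>0$; in particular $V_{\cc_0}$ is non-zero, hence has an irreducible constituent $\rho$. By Lemma~\ref{lem:sr},
\begin{equation*}
[S_r]=[S_{r-2}]+\sum_{\cc}(-1)^{\,r-c_1}[V_\cc],
\end{equation*}
the sum running over all $\cc=(c_1,c_2,r)\in\TT$ with $c_1\equiv c_2\pmod 2$, and in it $V_{\cc_0}$ occurs with coefficient $(-1)^{\,r-(r-1)}=-1$. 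The idea is to show that $\rho$ occurs in no other summand, so that its multiplicity in the virtual representation $[S_r]$ equals $-m_\rho(V_{\cc_0})<0$; since the multiplicities of a genuine $K$-representation are non-negative, $[S_r]$ is then not effective.

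To verify that $\rho$ occurs in no other summand I would argue as follows. Expanding $[S_{r-2}]$ via~(\ref{eq:sr}) writes it as an integer combination of permutation modules $U_\cc$ with $c_3\le r-2$; since $C_\cc\supseteq K_{c_3}$ and $K_{c_3}$ is normal in $K$, each such $U_\cc$ is fixed by $K_{c_3}$, hence by $K_{r-1}$, so every constituent of $[S_{r-2}]$ is $K_{r-1}$-fixed, whereas $V^{K_{r-1}}=\bigoplus_{\cc\preceq(r-1,r-1,r-1)}V_\cc$ contains no component equivalent to $V_{\cc_0}$ because the third coordinate is constant on equivalence classes (Theorem~\ref{thm:threedesc}(i)); thus $\rho$ does not occur in $[S_{r-2}]$. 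For a summand $V_\cc$ with $\cc=(c_1,c_2,r)$ and $\cc\ne\cc_0$: if $\cc$ has three descendants, Theorem~\ref{thm:threedesc} forces $\I(V_\cc,V_{\cc_0})=0$ unless $c_1+c_2-r=k=r-2$, and $c_1+c_2=2r-2$ with $c_1,c_2\le r-1$ forces $\cc=\cc_0$, a contradiction; if $\cc$ has one or two descendants, then $V_\cc$ is irreducible and, by the remark following Theorem~\ref{thm:onedesc} and the multiplicity-one statement of Section~\ref{sec:two}, its equivalence class contains no three-descendant component, so the ``equivalent or disjoint'' dichotomy shows $\rho$ does not occur in $V_\cc$.

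The main obstacle is this last point: one needs the global structural fact that any two components $V_\cc$, $V_\dd$ are either completely equivalent or share no irreducible constituent, together with the classification of all equivalences among the $V_\cc$ furnished by Theorem~\ref{thm:onedesc}, Section~\ref{sec:two}, and Theorem~\ref{thm:threedesc}. Granting that synthesis of Sections~\ref{sec:one}--\ref{sec:three}, the only genuinely new observation needed here is that $V_{(r-1,r-1,r)}$ forms a singleton equivalence class, which the computation $c_1+c_2=2r-2$, $c_1,c_2\le r-1$ above makes transparent; alternatively one could compare dimensions via the formulas of Section~\ref{sec:defn} to exclude cross-type equivalences, at the cost of a short case analysis.
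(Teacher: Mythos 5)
Your argument is correct and follows the paper's own route: apply Lemma~\ref{lem:sr}, note that $V_{(r-1,r-1,r)}$ enters with coefficient $-1$, and check via Theorem~\ref{thm:threedesc} and the multiplicity statements of Sections~\ref{sec:one}--\ref{sec:two} that it shares no constituents with any other term. You in fact supply more justification for that last disjointness claim than the paper does, which simply asserts it.
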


\end{document}